\newtheorem{theorem}{Theorem}[section]
\newtheorem{lemma}[theorem]{Lemma}
\newtheorem{prop}[theorem]{Proposition}
\newtheorem{coro}[theorem]{Corollary}
\newtheorem{remark}[theorem]{Remark}
\newcommand{\ddbar}{\partial\bar\partial}
\renewcommand{\o}{\omega}
\title{Extremizers of the $J$ functional with respect to the $d_1$ metric}
\author{Sam Bachhuber, Aaron Benda, Benjamin Christophel, Tam\'as Darvas}
\date{\small{\emph{Dedicated to L\'aszl\'o Lempert on the occasion of his 70th birthday.}}}
\begin{document}
\maketitle
\begin{abstract} In previous work, Darvas--George--Smith obtained  inequalities between the large scale asymptotic of the $J$ functional with respect to the $d_1$ metric on the space of toric K\"ahler metrics/rays. 
In this work we prove sharpness of these inequalities on all toric K\"ahler manifolds, and study the extremizing potentials/rays. On general K\"ahler manifolds we show that existence of radial extremizers is equivalent with the existence of plurisupported currents, as introduced and studied by McCleerey.

\end{abstract}

\section{Introduction and main results}

 Let $(X,\omega)$ be a K\"ahler manifold of dimension $n$. We consider the space of K\"ahler metrics that are cohomologous to $\omega$:
$$\mathcal H := \{\tilde \omega \textup{ K\"ahler on } X \textup{ and } [\tilde \omega]_{dR} = [\omega]_{dR}\}$$

By the $\ddbar$-lemma, for all $\tilde \omega \in \mathcal H$ there exists $u \in C^\infty(X)$, unique up to a constant, such that $\tilde \omega = \omega_u:= \omega + i\ddbar u.$ Consequently,  instead of looking at $\mathcal H$ directly, one can work with the space of \emph{K\"ahler potentials} instead:
$$\mathcal H_\omega :=\{ u \in C^\infty(X) \textup{ s.t. } \omega + i\ddbar u >0 \}.$$
By $\textup{PSH}(X,\omega)$ we denote the space of quasi-plurisubharmonic functions $u$ satisfying $\omega_u:= \omega + i\ddbar u \geq 0$, in the sense of currents. Clearly, $\mathcal H_\omega \subset \textup{PSH}(X,\omega)$, hence all K\"ahler potentials are $\o$-plurisubharmonic ($\omega$-psh). For a comprehensive treatment of $\omega$-psh functions we refer to the recent book \cite[Chapter 8]{GZ16}, for a quick introduction see  \cite[Appendix A.1]{Da19}, \cite[Section 2]{Bl13}.

Let $I:\mathcal H_\o \to \Bbb R$ be the \emph{Monge--Amp\`ere energy}, one of the most basic functionals of K\"ahler geometry:
\begin{equation}\label{eq: I_def}
I(u):= \frac{1}{(n+1)V}\sum_{j=0}^n \int_X u \o^j \wedge \o_{u}^{n-j},
\end{equation}
where $V = \int_X \omega^n$ is the volume of $(X,\omega)$. For a  detailed analysis of the $I$ functional we refer to \cite[page 111]{Bl13} and \cite[Section 3.7]{Da19}. Closely related, the $J$ functional $J: \mathcal H_\omega \to \Bbb R$ is defined as follows:
$$J(u) = \frac{1}{V}\int_X u \o^n - I(u).$$
Using Stokes' theorem one can prove that $J(u) \geq 0$, and in many ways $J$ acts as a norm-like expression on $\mathcal H_\o$. This aspect will be revisited in this work.

By definition, the space of K\"ahler potentials $\mathcal H_\o$ is a convex open subset of $C^\infty(X)$, hence it is a trivial ``Fr\'echet manifold". Motivated by questions in stability, one can introduce on $\mathcal H_\o$ an $L^1$ type Finsler metric \cite{Da15}. If $u \in \mathcal H_\o$ and $\xi \in T_u \mathcal H_\o \simeq C^\infty(X)$, then the $L^1$-length of $\xi$ is given by the following expression:
\begin{equation}\label{eq: Lp_metric_def}
\| \xi\|_{u} = \frac{1}{V}\int_X |\xi| \o_u^n.
\end{equation}
The analogous $L^2$ type metric was introduced by Mabuchi \cite{Ma87} (independently discovered by Semmes \cite{Se92} and Donaldson \cite{Do99}). This Riemannian metric was studied in detail by Chen \cite{Ch00}, and for more historical details we refer to \cite[Chapter 3]{Da19}.

To the Finsler metric in \eqref{eq: Lp_metric_def} one associates a path length pseudo-metric $d_1(\cdot,\cdot)$ on $\mathcal H_\omega$. As proved in \cite{Da15}, $d_1$ is actually a metric and $(\mathcal H_\omega,d_1)$ is a geodesic metric space, whose abstract completion can be identified with $(
\mathcal E^1,d_1)$, where $\mathcal E^1 \subset \textup{PSH}(X,\omega)$ is a space of potentials introduced by Guedj--Zeriahi \cite{GZ07}, inspired by work of Cegrell \cite{Ce98} in the local case. 
\paragraph{Extremizing potentials in toric case and convex analysis.}By \cite[Remark 6.3]{Da15} it was known that  $d_1$ and $J$ are asymptotically equivalent on $\mathcal H_\omega$. In \cite{DGS21} the authors set out to sharpen the  constants in the asymptotic comparison of $J$ and $d_1$, and proved the following global inequalities on toric K\"ahler manifolds:

\begin{theorem}[\cite{DGS21}]\label{thm: main_thm} Let $(X,\omega)$ be a toric K\"ahler manifold. Then there exists a constant $D>0$ such that 
\begin{equation}\label{eq: main_thm}
\frac{2}{n+1}\cdot{\left(\frac{n}{n+1}\right)}^n J(u)  - D \leq d_1(0,u)\leq 2 J(u) + D, \ \ u \in \mathcal H^T_\omega \cap I^{-1}(0).
\end{equation}
In case $X = \Bbb CP^n$ and $\omega$ is the Fubini-Study metric the constants multiplying $J(u)$ are sharp.
\end{theorem}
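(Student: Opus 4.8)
The plan is to reduce both sides of \eqref{eq: main_thm} to convex analysis on the moment polytope and then exhibit explicit families of toric potentials that saturate each constant. For $(\mathbb{CP}^n,\omega_{FS})$ the polytope is the standard simplex $P=\{y\in\mathbb R^n: y_i\geq 0,\ \sum_i y_i\leq 1\}$, and every $u\in\mathcal H^T_\omega$ corresponds to a convex function $\phi=\phi_0+u$ on $\mathbb R^n$ with $\nabla\phi(\mathbb R^n)=\mathrm{int}(P)$, where $\phi_0$ is the Guillemin potential of $\omega_{FS}$. Writing $g=\phi^*$ and $g_0=\phi_0^*$ for the Legendre transforms (the symplectic potentials), I will first record the dictionary
$$I(u)=-\frac1V\int_P(g-g_0)\,dy,\qquad P(0,u)\leftrightarrow \max(g_0,g)^*,$$
the second identity because the rooftop envelope dualizes to the pointwise maximum of symplectic potentials. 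Combined with the Darvas formula $d_1(0,u)=I(0)+I(u)-2I(P(0,u))$ from \cite{Da15} and the normalization $I(u)=0$, this yields the exact expression $d_1(0,u)=\frac1V\int_P|g-g_0|\,dy$.

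Next I will compute the large-scale asymptotics along a ray $g_t=g_0+t\rho$, where $\rho\in C^\infty(\bar P)$ is convex with $\int_P\rho\,dy=0$ (mean zero enforces $I(u_t)=0$, and convexity keeps $g_t$ a legitimate symplectic potential for all $t$; a smoothing step realizes $u_t$ as genuine elements of $\mathcal H^T_\omega\cap I^{-1}(0)$). On the one hand $d_1(0,u_t)=\frac tV\int_P|\rho|\,dy$ exactly. On the other hand, since $I(u_t)=0$ gives $J(u_t)=\frac1V\int_X u_t\,\omega^n=\frac1V\int_P u_t(\nabla g_0(y))\,dy$, the Legendre asymptotic $(g_0+t\rho)^*(x)=-t\min_P\rho+O(1)$, valid uniformly on compact subsets of $\mathbb R^n$, will give the $J$-slope $\lim_{t\to\infty}t^{-1}J(u_t)=-\min_P\rho$. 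Therefore, after the reparametrization $\rho=\psi-\bar\psi$ with $\psi$ convex, $\min_P\psi=0$ and $\bar\psi=V^{-1}\int_P\psi\,dy$, the limiting ratio is
$$\lim_{t\to\infty}\frac{d_1(0,u_t)}{J(u_t)}=R(\psi):=\frac{\frac1V\int_P|\psi-\bar\psi|\,dy}{\bar\psi}.$$

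Sharpness then becomes an elementary extremal problem for $R$. For the upper constant, the bound $R(\psi)\leq 2$ follows at once from $\psi\geq 0$ and $\int_P(\psi-\bar\psi)\,dy=0$, and it is approached by a family of convex $\psi_\epsilon$ that are flat ($\equiv 0$) on a large interior region and rise steeply in an $\epsilon$-collar of $\partial P$; thus the constant $2$ in \eqref{eq: main_thm} cannot be lowered. For the lower constant, the decisive test is the \emph{linear} potential $\psi(y)=y_1+\cdots+y_n$: on the simplex the quantity $\sigma=\sum_i y_i$ has density $n s^{n-1}$ on $[0,1]$, whence $\bar\psi=\frac{n}{n+1}$ and a direct computation gives $\frac1V\int_P|\psi-\bar\psi|\,dy=\frac{2}{n+1}\big(\frac{n}{n+1}\big)^{n+1}$, so that $R(\psi)=\frac{2}{n+1}\big(\frac{n}{n+1}\big)^n$, matching the left-hand constant in \eqref{eq: main_thm} exactly. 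Hence that constant is optimal as well.

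The main obstacle is the rigorous passage from the formal slope computations to genuine elements of $\mathcal H^T_\omega\cap I^{-1}(0)$. Two points need care: controlling the $O(1)$ Legendre remainder in the $J$-slope \emph{uniformly up to $\partial P$}, where the moment map $\nabla g_0$ degenerates and the naive ``uniform on compacts'' estimate is insufficient; and checking that the extremal test functions (the linear $\psi$ and the boundary-collar $\psi_\epsilon$), which are only piecewise smooth or singular at $\partial P$, can be approximated by smooth strictly convex symplectic potentials of $P$ without changing the limiting ratio. Routing the $d_1$ side entirely through the exact identity $d_1(0,u)=-2I(P(0,u))$ isolates all analytic difficulty in the single one-sided asymptotic for $J$, which is where I expect the real work to lie.
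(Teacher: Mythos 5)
Your reduction to convex analysis on the moment polytope is essentially the paper's own route (Section 2.1): the exact identity $d_1(0,u)=\frac{1}{\mu(P)}\int_P|\phi_u-\phi_0|\,d\mu$, the formula $I(u)=-\frac{1}{\mu(P)}\int_P(\phi_u-\phi_0)\,d\mu$, and the comparison $J(u)\approx -\inf_P\phi_u$ up to a bounded error controlled by $\phi_0\in L^\infty(P)$ (Proposition \ref{prop: J_toric}); your derivation of the $d_1$ formula through the rooftop envelope is a legitimate alternative to the Finsler computation \eqref{eq: L^1_isom}. The sharpness part is also correct: the elementary bound $R\leq 2$ together with the boundary-collar family handles the upper constant (cf.\ Remark \ref{rem: not_interesting}), and the centered linear function $\sum_i y_i-\frac{n}{n+1}$ on the simplex is exactly the affine extremizer of \cite[Proposition 4.2]{DGS21} (cf.\ Theorem \ref{thm: convex_extr_aff} here); your value $R=\frac{2}{n+1}\left(\frac{n}{n+1}\right)^n$ checks out.

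The genuine gap is that you never prove the first inequality of \eqref{eq: main_thm}, which is the substantive half of the statement. After your reduction it reads: for every convex $\phi\in L^1(P)$ with $\int_P\phi\,d\mu=0$ one has $\frac{1}{\mu(P)}\int_P|\phi|\,d\mu\geq -\frac{2}{n+1}\left(\frac{n}{n+1}\right)^n\inf_P\phi$. Exhibiting one test function attaining the constant shows the constant cannot be improved, but says nothing about the validity of the inequality for all $\phi$; nor can your ray asymptotics substitute for the uniform statement with additive constant $D$ over all of $\mathcal H^T_\omega\cap I^{-1}(0)$. The missing argument (Theorem \ref{thm: thm_convex}, from \cite{DGS21}) is not an ``elementary extremal problem'' in the sense of your $R\leq 2$ computation: normalizing $\inf_P\phi=-1$ with minimizer $y\in\overline{P}$, convexity gives the dilation inclusion $\frac{b-a}{b+1}y+\frac{a+1}{b+1}P^\phi_b\subset P^\phi_a$ for sublevel sets, hence $\mu(P^\phi_b)\leq\left(\frac{b+1}{a+1}\right)^n\mu(P^\phi_a)$; a layer-cake computation then yields $\int_{P^\phi_-}|\phi|\geq\frac{1}{n+1}\mu(P^\phi_-)$ and a lower bound for $\int_{P^\phi_+}\phi$ in terms of $\mu(P)$ and $\mu(P^\phi_-)$, and one concludes by minimizing over the ratio $A=\int_{P^\phi_+}\phi\,/\,\mu(P^\phi_-)\geq\frac{1}{n+1}$. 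None of this, nor any replacement for it, appears in your proposal, so as written it establishes only the upper inequality and the optimality (not validity) of the lower one.
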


The motivation for obtaining sharpness in \eqref{eq: main_thm} stems from criteria of existence for canonical K\"ahler metrics. It is expected that uniform K-stability of $(X,\omega)$ is related to energy properness with respect to the $d_1$ metric \cite{BBJ}. On the other hand, in concrete situations one can often show energy properness with respect to the $J$ functional, also implying existence of canonical metrics (e.g. \cite{SZ20}). Naturally, there is an interest in converting such properness criteria into K-stability criteria, and vice versa, as optimally as possible.

In \cite[page 3]{DGS21} the authors proposed to study the sharpness of the above inequality in case of arbitrary toric K\"ahler manifolds, not just projective space. This is what we address in this work, and we show that  \eqref{eq: main_thm} is sharp for any toric K\"ahler manifold.

As pointed out in \cite[Section 2]{DGS21}, using Legendre transforms, the pair of inequalities in \eqref{eq: main_thm} is equivalent to a pair of integral inequalities involving convex functions of a convex domain, as expressed in \eqref{eq: main_thm_convex} below. We show that this pair of inequalities is sharp on any convex domain, proving the sharpness of \eqref{eq: main_thm} on any toric K\"ahler manifold:

\begin{theorem}\label{thm: main_thm_convex_1} Let $P \subset \Bbb R^n$ be a bounded open convex set. \\
\noindent (i)\cite{DGS21} for any convex $\phi \in L^1(P)$, satisfying $\int_P \phi= 0$, the following inequality holds:  
\begin{equation}\label{eq: main_thm_convex} 
 -\frac{2}{n+1}\cdot{\left(\frac{n}{n+1}\right)}^n\inf_P \phi \leq \frac{1}{\mu(P)}\int_P |\phi| d\mu\leq -2\inf_P \phi,
 \end{equation}
\noindent (ii) there exists a convex function $\phi \in L^1(P)$ such that $\int_P \phi = 0$ and 
\begin{equation}\label{eq: main_thm_convex_extr} 
 -\frac{2}{n+1}\cdot{\left(\frac{n}{n+1}\right)}^n\inf_P \phi = \frac{1}{\mu(P)}\int_P |\phi| d\mu.
\end{equation}
\end{theorem}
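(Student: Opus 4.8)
The plan is to establish part (ii) by exhibiting a single explicit extremizer; part (i) is already available from \cite{DGS21}, so only the equality case \eqref{eq: main_thm_convex_extr} must be produced. The guiding idea is to hunt for a convex $\phi$ whose sublevel sets $\{\phi \le s\}$ are homothetic copies of $P$ itself, because then every integral in sight collapses to a one–variable computation governed by the volume scaling of homothets. Concretely, fix any point $p \in P$ (possible since $P$ is open and nonempty) and let $\rho$ be the Minkowski gauge of $P-p$ recentered at $p$, that is
\[
\rho(x) = \inf\{\, t>0 : x \in p + t(\overline{P}-p)\,\}.
\]
As the gauge of a convex body with $p$ in its interior, $\rho$ is convex, satisfies $\rho(p)=0$ and $0\le \rho < 1$ on $P$, and enjoys the scaling property $\mu(\{\rho \le r\}) = r^n\,\mu(P)$ for $r\in[0,1]$. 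I then set
\[
\phi := \rho - \frac{n}{n+1}.
\]
This $\phi$ is convex and bounded on $P$, hence lies in $L^1(P)$, and $\inf_P \phi = -\tfrac{n}{n+1}$ is attained at $p$.

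Next I would verify the mean–zero normalization and compute the $L^1$ norm, using the layer–cake (coarea) formula together with the scaling of $\rho$. For the mean, $\int_P \rho\,d\mu = \int_0^1 \mu(\{\rho>r\}\cap P)\,dr = \mu(P)\int_0^1(1-r^n)\,dr = \tfrac{n}{n+1}\mu(P)$, so that $\int_P \phi\,d\mu = 0$, as required. Because $\int_P\phi\,d\mu=0$ forces $\int_P \phi_+\,d\mu = \int_P \phi_-\,d\mu$, we get $\int_P|\phi|\,d\mu = 2\int_P \phi_-\,d\mu$, where the negative part $\phi_- = \bigl(\tfrac{n}{n+1}-\rho\bigr)_+$ is supported on the homothet $\{\rho < \tfrac{n}{n+1}\}$. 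A second layer–cake computation gives $\int_P \phi_-\,d\mu = \int_0^{n/(n+1)} \mu(\{\rho<r\})\,dr = \mu(P)\,\tfrac{1}{n+1}\bigl(\tfrac{n}{n+1}\bigr)^{n+1}$.

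Combining the two computations and using $\inf_P\phi = -\tfrac{n}{n+1}$ yields
\[
\frac{1}{\mu(P)}\int_P|\phi|\,d\mu = \frac{2}{n+1}\left(\frac{n}{n+1}\right)^{n+1} = -\frac{2}{n+1}\left(\frac{n}{n+1}\right)^n \inf_P\phi,
\]
which is exactly \eqref{eq: main_thm_convex_extr}. I would add the remark that the extremal constant appears transparently here: the sharp value equals $\tfrac{n^n}{(n+1)^{n+1}} = \tfrac{1}{n+1}\bigl(\tfrac{n}{n+1}\bigr)^n = \max_{t\in[0,1]} t(1-t)^n$, attained at $t=\tfrac1{n+1}$, and the mean–zero normalization pins the sign–change level of $\phi$ to precisely the threshold that realizes this maximum. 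The only genuine difficulty is guessing the correct extremizer; once one insists on sublevel sets homothetic to $P$, convexity is automatic from the gauge and every remaining step is an elementary integral. In particular nothing beyond convex $L^1$ regularity is needed, so there is no obstruction coming from the boundary of $P$, and the same $\phi$ works on an arbitrary bounded open convex set (note $\phi$ is typically a genuine cone, not affine, e.g. $\phi(x)=|x|-\tfrac{n}{n+1}$ when $P$ is a Euclidean ball centered at $p$).
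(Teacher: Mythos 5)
Your construction is correct, and it is essentially the paper's own: the function $\phi_y$ built in the surjectivity part of the proof of Theorem \ref{thm: extremizer_char} (verified via Proposition \ref{prop: simple_reverse}) is exactly your rescaled Minkowski gauge, i.e. a convex function whose sublevel sets are homothets of $P$ centered at a chosen point, with the same layer-cake computations. The only difference is cosmetic: you fix an interior base point $p\in P$ and package the function as $\rho-\tfrac{n}{n+1}$, while the paper parametrizes the same family by arbitrary $y\in\overline{P}$ in order to additionally classify all extremizers.
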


Potentials $\phi$ satisfying \eqref{eq: main_thm_convex_extr}  will be referred to as \emph{extremizers} in this work. When $P$ is the unit simplex (corresponding to projective space), the existence of extremizers was proved in \cite[Proposition 4.2]{DGS21}.

Regarding the first inequality in \eqref{eq: main_thm_convex}, we don't just prove the existence of extremizers, but we end up characterizing them explicitly, with this answering another question posed in \cite[page 3]{DGS21} (See Theorem \ref{thm: extremizer_char}). What is more, in Section 3.1 we characterize domains $P$ admitting affine extremizers. Such domains $P$ are always very special: they are the intersection of a cone with a half-space.

Regarding the second inequality in \eqref{eq: main_thm_convex}, it is easy to see that it is always sharp, but without any extremizing functions (See Remark \ref{rem: not_interesting}).

\paragraph{Extremizing rays and plurisupported currents.}
In \cite{DGS21} it was conjectured that Theorem \ref{thm: main_thm} holds in case of general K\"ahler manifolds as well.  To provide evidence for this, the authors obtained the radial version of the expected result, as we now recall, along with related terminology from \cite{DL18}. 

Let $\{u_t\} \in \mathcal R^1$ be a $d_1$-geodesic ray $[0,\infty) \ni t \to u_t \in \mathcal E^1$ of $(\mathcal E^1,d_1)$, emanating from $u_0 = 0 \in \mathcal H_\omega$, and normalized by $I(u_t) = 0, \ t \geq 0$. Since the $J$ functional is convex along geodesics, one can define its slope along geodesic rays (informally called the \emph{radial $J$ functional}):
\begin{equation}\label{eq: J_radial_def}
\{u_t\} := \lim_{t \to \infty} \frac{J(u_t)}{t}.
\end{equation}
The $L^1$ speed of a ray $\{u_t\}_t$ is simply the quantity $d_1(0,u_1)$.  We have the following inequality between the $L^1$ speed and the radial $J$ functional:

\begin{theorem}\cite{DGS21}\label{thm: radial_main_ineq} Suppose that $(X,\omega)$ is a compact K\"ahler manifold and $\{u_t\}_t \in \mathcal R^1$. Then the following inequality holds:
\begin{equation}\label{eq: radial_main_ineq}
\frac{2}{n+1}\cdot{\left(\frac{n}{n+1}\right)}^n J\{u_t\} \leq d_1(0,u_1)\leq 2 J\{u_t\}.
\end{equation}
The inequality is sharp in case of $(\Bbb C\Bbb P^n, \omega_{FS}).$
\end{theorem}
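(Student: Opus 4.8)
The plan is to reduce \eqref{eq: radial_main_ineq} to a one-dimensional inequality for the Duistermaat--Heckman measure of the ray, which turns out to be exactly the distributional form of the convex inequality \eqref{eq: main_thm_convex}. First I would record the elementary consequences of the ray structure. Since $t\mapsto u_t$ is a $d_1$-geodesic ray one has $d_1(0,u_t)=t\,d_1(0,u_1)$, while convexity of $J$ along geodesics gives $J\{u_t\}=\lim_{t\to\infty}J(u_t)/t$ as in \eqref{eq: J_radial_def}; under the normalization $I(u_t)=0$ one moreover has $J(u_t)=\frac1V\int_X u_t\,\omega^n$. It therefore suffices to compute the two slopes $d_1(0,u_1)$ and $J\{u_t\}$ and to compare them.

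To compute the slopes I would pass to the maximal test curve dual to the ray, $\hat u_\tau:=\big(\inf_{t\ge0}(u_t-t\tau)\big)^\ast\in\mathrm{PSH}(X,\omega)$, which is concave and decreasing in $\tau$ and recovers the ray via $u_t=\sup_\tau(\hat u_\tau+t\tau)$. To it one attaches the Duistermaat--Heckman probability measure $\nu$ on $\mathbb R$, encoded by the mass function $G(\tau)=\frac1V\int_X\omega_{\hat u_\tau}^n$. The key step is a trio of identities expressing the ray invariants through $\nu$: the normalization $I(u_t)=0$ becomes the barycenter condition $\int_{\mathbb R}\tau\,d\nu=0$; the radial energy computes the $L^1$-speed as $d_1(0,u_1)=\int_{\mathbb R}|\tau|\,d\nu$; and $J\{u_t\}=-\inf(\mathrm{supp}\,\nu)$. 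These are the transcendental analogues of the Legendre-transform dictionary of \cite[Section 2]{DGS21}, and I would establish them from the Ross--Witt Nystr\"om correspondence together with the continuity of $I$ and $d_1$ along monotone families, in the framework of \cite{DL18}.

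With these identities the problem becomes purely one-dimensional, with $a:=-\inf(\mathrm{supp}\,\nu)=J\{u_t\}$. The upper bound is the easy direction and uses no further structure: the barycenter condition gives $\int|\tau|\,d\nu=2\int\tau^-\,d\nu\le 2a\,\nu(\{\tau<0\})\le 2a$, which is precisely the argument behind the right-hand inequality of \eqref{eq: main_thm_convex}. The lower bound is the crux, and this is where the geometry of the ray must enter: I would invoke a Brunn--Minkowski/Alexandrov--Fenchel-type concavity of the Monge--Amp\`ere mass of the test curve, namely that $\tau\mapsto\nu((-\infty,\tau])^{1/n}$ is concave --- the analytic counterpart of the convexity of $\phi$ and of the Brunn--Minkowski concavity of the sublevel sets $\{\phi\le\tau\}$ on a convex body. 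Under this constraint together with $\int\tau\,d\nu=0$, the ratio $\int|\tau|\,d\nu\,/\,a$ is minimized by the homothetic (``cone'') configuration, whose distribution has density $n r^{n-1}$; a one-variable computation then returns the sharp constant $\frac2{n+1}(\frac n{n+1})^n$, matching \eqref{eq: main_thm_convex}. I expect the main obstacle to be exactly this concavity: proving it for the Monge--Amp\`ere mass along a test curve on a general K\"ahler manifold, and pinning down the extremal configuration, is what upgrades the convex-analytic content of Theorem \ref{thm: main_thm_convex_1} to the transcendental setting.

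Finally, for sharpness on $(\mathbb{CP}^n,\omega_{FS})$ I would run the dictionary backwards on the unit simplex: the cone extremizer furnished by Theorem \ref{thm: main_thm_convex_1}(ii) produces a toric $d_1$-geodesic ray attaining the lower bound, while a family of mean-zero convex functions with $-\inf_P\phi$ fixed and $\sup_P\phi\to\infty$ produces toric rays whose ratio tends to $2$, showing that the upper bound is sharp as well.
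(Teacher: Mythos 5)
Your proposal is correct and follows essentially the same route as the paper: reduce everything to $\dot u_0$ (equivalently, the Duistermaat--Heckman measure of the dual test curve) via the Ross--Witt Nystr\"om correspondence, so that $J\{u_t\}$, $d_1(0,u_1)$ and the normalization $I(u_t)=0$ become the extremum, the $L^1$-norm and the mean-zero condition of a one-dimensional distribution, prove the Brunn--Minkowski-type concavity of $\tau\mapsto\big(\int_X\omega_{\hat u_\tau}^n\big)^{1/n}=\big(\int_{\{\dot u_0\geq\tau\}}\omega^n\big)^{1/n}$ (the paper's Lemma \ref{lem: rad_legendre_sublevel}, via \cite{DT20} and the log-concavity of Monge--Amp\`ere masses), and then rerun the layer-cake optimization of Theorem \ref{thm: thm_convex}. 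You correctly isolate the concavity of the mass function as the one genuinely transcendental ingredient; the only discrepancies are cosmetic (a flipped sign convention, $J\{u_t\}=\sup_X\dot u_0=\tau^+_u$ rather than $-\inf\operatorname{supp}\nu$, and a slightly loose description of the upper-bound sharpness examples).
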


Since rays are constant speed, notice that the middle term in the above inequality could have been replaced by the limit $\lim_{t \to \infty} \frac{d_1(0,u_t)}{t}$. In particular, \eqref{eq: radial_main_ineq} would instantly follow from the conjectured inequality \eqref{eq: main_thm} for general K\"ahler manifolds.

The sharpness of the second inequality in \eqref{eq: radial_main_ineq} is not difficult to argue on any K\"ahler manifold, using examples similar to Remark \ref{rem: not_interesting}. As in Theorem \ref{thm: main_thm_convex_1}(ii), we are interested in the sharpness of the first inequality of \eqref{eq: radial_main_ineq} on arbitrary K\"ahler manifolds, and finding all extremizing rays.

It turns out that this question is tied to the existence of plurisupported potentials in $\textup{PSH}(X,\omega)$, as introduced and studied in \cite{MC21}. Recall that $v \in \textup{PSH}(X,\omega)$ is \emph{plurisupported} if $\int_X \omega_ v \wedge \omega^{n-1} =0$, i.e., the current $\omega_v$ is supported on a pluripolar set. 

\begin{theorem}\label{thm: main_extremal_ray} Suppose that $(X,\omega)$ is a compact K\"ahler manifold. Then there exists $\{u_t\}_t \in \mathcal R^1$ such that $J\{u_t\}/d_1(0,u_1) = \frac{(n+1)^{n+1}}{2 n^n}$ if and only if there exists $v \in \textup{PSH}(X,\omega)$ plurisupported.
\end{theorem}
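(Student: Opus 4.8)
The plan is to recognize that the extremal constant $\tfrac{(n+1)^{n+1}}{2n^n}$ is exactly the reciprocal of $\tfrac{2}{n+1}\left(\tfrac{n}{n+1}\right)^n$, so that the hypothesis $J\{u_t\}/d_1(0,u_1)=\tfrac{(n+1)^{n+1}}{2n^n}$ is equivalent to equality in the \emph{left} inequality of \eqref{eq: radial_main_ineq}; the whole proof then amounts to characterizing that equality. I would encode a ray $\{u_t\}_t\in\mathcal R^1$ (normalized by $u_0=0$ and $I(u_t)=0$) through its maximal test curve $\hat u_\tau:=\inf_{t\ge 0}(u_t-t\tau)$ and the non-pluripolar mass profile $V(\tau):=\int_X\langle\omega_{\hat u_\tau}^n\rangle$, which decreases from $V$ to $0$ on an interval $[\tau_{\min},\tau_{\max}]$. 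In this language the radial functionals take a transparent form: writing $\bar m:=-dV/V$ for the (probability) Duistermaat--Heckman measure, whose barycenter $\int\tau\,d\bar m$ equals $I\{u_t\}=0$, one has $J\{u_t\}=\tau_{\max}$ and $d_1(0,u_1)=\int_{\mathbb R}|\tau|\,d\bar m$. The structural engine is a Brunn--Minkowski-type concavity: since $\tau\mapsto\hat u_\tau$ is concave, $\tau\mapsto V(\tau)^{1/n}$ is concave.

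With these formulas the left inequality of \eqref{eq: radial_main_ineq} should reduce to a scalar statement: among decreasing concave $h=(V(\tau)/V)^{1/n}$ with $h(\tau_{\min})=1$, $h(\tau_{\max})=0$ and barycenter normalized to $0$, the ratio $\tau_{\max}/\int|\tau|\,d(-h^n)$ is maximized precisely when $h$ is affine, the optimal value being recorded by $\tfrac{n^n}{(n+1)^{n+1}}=\max_{x\in[0,1]}x(1-x)^n$. Thus equality forces $V(\tau)^{1/n}$ to be affine on $[\tau_{\min},\tau_{\max}]$, i.e. the Brunn--Minkowski concavity is saturated.

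For the backward direction I would produce an explicit extremizer. Given a plurisupported $v$ (normalize $\sup_X v=0$; vanishing of its non-pluripolar mass forces $\langle\omega_v^j\wedge\omega^{n-j}\rangle=0$ for all $j\ge1$), set $u_t:=\max(v+\lambda t,\ \mu t)$ for any $\lambda>\mu$ and subtract $I(u_t)$ to normalize. A Legendre computation identifies this as the maximal geodesic ray with test curve $\hat u_\tau=\tfrac{\tau-\mu}{\lambda-\mu}v$, and expanding $\langle((1-c)\omega+c\,\omega_v)^n\rangle$ with $c=\tfrac{\tau-\mu}{\lambda-\mu}$ collapses, by plurisupportedness, to $(1-c)^nV$; hence $V(\tau)^{1/n}$ is affine and the ratio equals $\tfrac{(n+1)^{n+1}}{2n^n}$. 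For the forward direction, equality yields $V^{1/n}$ affine, and I would invoke the equality case of the Brunn--Minkowski inequality together with the structure theory of plurisupported currents of McCleerey \cite{MC21} to conclude that $v:=\hat u_{\tau_{\max}}=\lim_{\tau\to\tau_{\max}}\hat u_\tau$ is a genuine $\omega$-plurisubharmonic function with $\int_X\langle\omega_v\wedge\omega^{n-1}\rangle=0$.

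I expect this last extraction to be the main obstacle. Saturation of the Brunn--Minkowski concavity constrains only the top-degree masses $V(\tau)$, whereas plurisupportedness is the stronger vanishing of the \emph{mixed} mass $\langle\omega_v\wedge\omega^{n-1}\rangle$; upgrading one to the other is exactly a rigidity statement, and one must also rule out the degenerate case $v\equiv-\infty$ in order to obtain a genuine potential. The subsidiary technical points — justifying the radial formulas and the interchange of $t\to\infty$ with the integrals in the $\mathcal E^1$ setting (as in \cite{DGS21,DL18}), controlling the weak limit of $\omega_{u_t}^n$, which concentrates on a pluripolar set in the extremal case, and checking that the constructed $\max$-ray is maximal and of finite $d_1$-speed — should be routine with the available pluripotential machinery.
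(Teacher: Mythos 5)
Your architecture coincides with the paper's: reduce to equality in the left inequality of \eqref{eq: radial_main_ineq}; encode the ray by its maximal test curve $\{\hat u_\tau\}_\tau$ and the mass profile $V(\tau)=\int_X\omega_{\hat u_\tau}^n$ (the paper's Lemma \ref{lem: J_rad_formula}, Lemma \ref{lem: d_1_init} and \eqref{eq: int_est} give precisely your radial formulas $J\{u_t\}=\tau^+_u$, $d_1(0,u_1)=\int|\tau|\,d\bar m$, $I\{u_t\}=\int\tau\,d\bar m$); invoke the concavity of $\tau\mapsto V(\tau)^{1/n}$ from \cite{WN17}, \cite{DDL4}; and, for the converse, use the ray generated by the test curve $\tau\mapsto\frac{\tau-\mu}{\lambda-\mu}v$. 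The paper's version of the latter is \eqref{eq:psi=P[u]}, with the envelope $P[\cdot]$ applied so that the test curve is maximal and the associated ray is a genuine geodesic (your $\max(v+\lambda t,\mu t)$ is only a subgeodesic in general); since $\int_X\omega^n_{P[w]}=\int_X\omega_w^n$, your mass computation survives unchanged, and the backward direction is essentially Proposition \ref{prop: v_J_d_1_extremal}.

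The genuine gap is exactly where you flag it, and the tools you name there would not close it: McCleerey's results in \cite{MC21} presuppose plurisupportedness rather than detect it, and no equality case of the Brunn--Minkowski-type concavity is needed or available in usable form. The paper's resolution (Lemma \ref{lem: rad_legendre_sublevel_eq}) is more elementary and you already hold both of its ingredients. First, equality forces $\dot u_0\ge -\tau^+_u/n$ a.e., hence the ray is bounded and $\tau^-_u$ is finite with $\hat u_{\tau^-_u}=0$; concavity of $\tau\mapsto\hat u_\tau$ then gives $\frac{\tau-\tau^-_u}{\tau^+_u-\tau^-_u}v\le \hat u_\tau$ with $v=\hat u_{\tau^+_u}$, and since $\hat u_\tau$ is a \emph{model} potential with the same non-pluripolar mass as $\frac{\tau-\tau^-_u}{\tau^+_u-\tau^-_u}v$ (this is the saturation of \eqref{eq: ineq 2p}), the monotonicity theorem of \cite{DDL2} yields $\hat u_\tau=P\big[\frac{\tau-\tau^-_u}{\tau^+_u-\tau^-_u}v\big]$; in particular $v\not\equiv-\infty$. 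Second --- and this is the step you describe as ``a rigidity statement'' but which is pure algebra --- the resulting identity
\begin{equation*}
\int_X\big\langle\big((1-c)\omega+c\,\omega_v\big)^n\big\rangle=(1-c)^n\int_X\omega^n
\qquad\text{for \emph{all} } c\in[0,1]
\end{equation*}
is an identity of polynomials in $c$, because the non-pluripolar product is multilinear: the left side is $\sum_{k=0}^n\binom{n}{k}c^k(1-c)^{n-k}\int_X\omega_v^k\wedge\omega^{n-k}$. Matching coefficients forces $\int_X\omega_v^k\wedge\omega^{n-k}=0$ for every $k\ge1$, which is plurisupportedness (in particular the mixed mass $\int_X\omega_v\wedge\omega^{n-1}$ vanishes). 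So knowledge of the top-degree masses along the whole segment $c\in[0,1]$, not at a single $c$, is what upgrades to the mixed masses --- this is the same expansion you wrote down for the backward direction, read in reverse. Without the identification $\hat u_\tau=P[cv]$ and this coefficient-matching, your forward direction does not go through.
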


The construction of the extremizings rays $\{u_t\}_t \in \mathcal R^1$ in terms of plurisupported currents $v \in \textup{PSH}(X,\omega)$  is quite explicit, and is elaborated in Theorem \ref{thm: extr_ray_char}. What is more, it follows from Theorem \ref{thm: extr_ray_char} that all such extremizing rays come from this same construction.

Existence of plurisupported potentials is a deep question related to a number of important conjectures in the field. For details, we refer to \cite{MC21}. However if $(X,\omega)$ is a projective K\"ahler manifold, then there exists $v \in \textup{PSH}(X,\omega)$ such that $\omega_v = [D]$, for some divisor $D \subset X$. It is clear that such $v$ is plurisupported. Hence we get the following corollary:

\begin{coro} Suppose that $(X,\omega)$ is a projective K\"ahler manifold. Then there exists $\{u_t\}_t \in \mathcal R^1$ such that $J\{u_t\}/d_1(0,u_1) = \frac{(n+1)^{n+1}}{2 n^n}$. In particular, the first inequality of \eqref{eq: radial_main_ineq} is sharp.
\end{coro}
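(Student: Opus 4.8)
The corollary is a direct specialization of Theorem~\ref{thm: main_extremal_ray}, so essentially all of the analytic content is already contained there; the only thing left to supply is a plurisupported potential in $\textup{PSH}(X,\omega)$, after which the extremizing ray and the claimed sharpness follow formally. I would therefore begin by unwinding the target ratio: since
\[
\frac{(n+1)^{n+1}}{2n^n} = \left(\frac{2}{n+1}\left(\frac{n}{n+1}\right)^n\right)^{-1},
\]
a ray $\{u_t\}_t \in \mathcal R^1$ with $J\{u_t\}/d_1(0,u_1) = \frac{(n+1)^{n+1}}{2 n^n}$ is precisely one for which the left-hand inequality of \eqref{eq: radial_main_ineq} holds with equality. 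Hence producing such a ray is the same as establishing sharpness of that inequality, which is exactly the second assertion of the corollary.

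Next I would construct the plurisupported current. Because $(X,\omega)$ is projective, there is an ample line bundle $L$ with $c_1(L)$ proportional to $[\omega]$; rescaling $\omega$ (which does not affect the scale-invariant ratio in question) we may assume $c_1(L) = [\omega]$. For $k \gg 0$ the power $L^{\otimes k}$ is globally generated, so it admits a nonzero holomorphic section $s$, with zero divisor $D = \mathrm{div}(s)$. Fixing a smooth Hermitian metric $h$ on $L$ whose Chern curvature represents $[\omega]$ (possible since $c_1(L)=[\omega]$, by the $\ddbar$-lemma), the Poincar\'e--Lelong formula produces a function $v$ (a suitably normalized multiple of $\log\|s\|_{h^{\otimes k}}$) satisfying $\omega_v = \omega + i\ddbar v = \tfrac{1}{k}[D] \geq 0$, a closed positive current in the class $[\omega]$ whose support lies in the analytic hypersurface $D$. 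Since $D$ is pluripolar, the non-pluripolar mixed product $\int_X \omega_v \wedge \omega^{n-1}$ vanishes, so $v \in \textup{PSH}(X,\omega)$ is plurisupported in the sense recalled just before the corollary.

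Finally I would invoke the ``if'' direction of Theorem~\ref{thm: main_extremal_ray}: the existence of this plurisupported $v$ yields a ray $\{u_t\}_t \in \mathcal R^1$ with $J\{u_t\}/d_1(0,u_1) = \frac{(n+1)^{n+1}}{2 n^n}$, and by the first paragraph this ray realizes equality in the first inequality of \eqref{eq: radial_main_ineq}, proving its sharpness. The main---indeed the only genuine---obstacle is entirely upstream, in Theorem~\ref{thm: main_extremal_ray}, which converts the equality case of the radial inequality into the existence of a plurisupported current; once that equivalence is granted, the projective case is easy precisely because ample line bundles furnish divisorial, hence pluripolar, positive currents, in contrast to the general K\"ahler setting where the existence of plurisupported potentials is the deep point studied in \cite{MC21}.
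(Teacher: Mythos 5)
Your argument is correct and is essentially the paper's own proof: the paper likewise disposes of the corollary in one line by producing $v \in \textup{PSH}(X,\omega)$ with $\omega_v = [D]$ for a divisor $D$ (hence plurisupported) and invoking Theorem \ref{thm: main_extremal_ray}, and your Poincar\'e--Lelong construction merely makes that $v$ explicit, while your opening observation that the stated ratio is exactly equality in the first inequality of \eqref{eq: radial_main_ineq} is the same bookkeeping the paper leaves implicit. The one caveat --- shared with the paper's own formulation --- is that your step ``there is an ample line bundle $L$ with $c_1(L)$ proportional to $[\omega]$'' silently assumes $[\omega]$ lies in the real N\'eron--Severi span (a current of integration along an $\mathbb{R}$-divisor can only represent such classes), so the corollary should be read with that convention on $(X,\omega)$.
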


\paragraph{Acknowledgments.} This REU project was carried out during the Spring and Summer term of 2021, partially funded by the NSF grant DMS-1846942(CAREER) and an Alfred P. Sloan Fellowship. We thank the anonymous referee for recommendations that improved the presentation. 

After the publication of the paper in Analysis Mathematica, we learned from Kewei Zhang that the constants of Theorem 1.2 and Theorem 1.3 already appeared in the non-Archimedean context in \cite[Theorem 7.9]{BHJ17}, without discussing their optimality. Though the results don't overlap, the techniques of the arguments do. It is remarkable how the same philosphy is applicable in the totally different non-Archimedean and toric cases. Both this paper and \cite{DGS21} would have been presented differently had we been aware of \cite[Theorem 7.9]{BHJ17}.

\section{Preliminaries}

\subsection{Analysis on toric K\"ahler manifolds}

In this short section we point out how the inequalities \eqref{eq: main_thm} and \eqref{eq: main_thm_convex} are equivalent. The material here closely follows \cite[Section 3]{DGS21}, itself inspired from \cite[Section 6]{DG16} and \cite{ZZ08}. We say that $(X,\omega)$ is a toric K\"ahler manifold of dimension $n$ if we can embed $(\Bbb C^*)^n$ into $X$ so that the complement of $(\Bbb C^*)^n$ inside $X$ is a Zariski closed set. Moreover, we ask that the trivial action of $\Bbb T^n := (S^1)^n$ on $(\Bbb C^*)^n$ extends to $X$, and the K\"ahler form $\omega$ is $\Bbb T^n$-invariant.

Using the fact that $\omega$ is $\Bbb T^n$-invariant we get that
\begin{equation}\label{eq: omega_triv}
\omega = i\ddbar (\psi_0 \circ L) \ \textup{ on } \ (\Bbb C^*)^n,
\end{equation}
where $\psi_0 \in C^\infty(\Bbb R^n)$ and $L(z_1,z_2,\ldots,z_n) = (\log|z_1|,\log|z_2|,\ldots,\log|z_n|) \in \Bbb R^n.$ Since $\psi_0 \circ L$ is psh on $(\Bbb C^*)^n$, the potential $\psi_0$ has to be strictly convex on $\Bbb R^n$, and we may choose $\psi_0(0)=0$.

By $\mathcal H^{T}$ we will denote the metrics $\omega' \in \mathcal H$ that are torus invariant, i.e., $(S^1)^n$ acts by isometries on $\omega'$. $\mathcal H^{T}_\omega$ will denote the corresponding space of toric potentials.

For  $u \in \mathcal H^{T}_\omega$ we can introduce the following potential 
$$\psi_u := \psi_0 + u \circ E,$$
where $E(x)=E(x_1,x_2,\ldots,x_n) := (e^{x_1},e^{x_2},\ldots,e^{x_n}), \ x \in \Bbb R^n.$  The point is that $\omega_u = i\ddbar \psi_u \circ L$ on $(\Bbb C^*)^n$.

\paragraph{The Legendre transform.} Given $\omega_u \in \mathcal H_\o^T$, it follows from a result of  Atiyah--Guillemin-- (see \cite[Chapter 27]{CS08}) that  the ``moment
map" $\nabla \psi_u: \Bbb R^n \to \Bbb R^n$ is one-to-one and sends $\Bbb R^n$ to $P := \textup{Im }\nabla \psi_u$, which is a convex bounded polytope, independent of $u$. 

Though we will not use it, by a theorem of Delzant, $P$ satisfies a number of additional properties ({simple}, i.e., there are $n$ edges meeting at each vertex; {rational}, i.e., the edges meeting at the vertex $p$ are rational in the sense that each edge is of the form $p + t u_i$, $0 \leq t < \infty$, where $u_i \in \Bbb Z^n$; {smooth}, i.e., these $u_1,\ldots,u_n$ can be chosen to be a basis of $\Bbb Z^n$). What is more, such Delzant polytopes $P$ determine toric K\"ahler structures $(X,\omega)$ uniquely (see \cite{Ab00} and \cite[Chapter 28]{CS08}).

Since $\psi_u: \Bbb R^n \to \Bbb R$ is convex, we can take its Legendre transform and obtain a dual convex function $\phi_u$,  with possible values equal to $+\infty$:
$$\phi_u(s) = \psi_u^*(s):= \sup_{x \in \Bbb R^n}\big(\langle s, x\rangle - \psi_u(x) \big), \ \ s \in \Bbb R^n.$$

Since $\textup{Im }\nabla \psi_u = P$, it follows that $\phi_u(s)$ is finite if and only if $s \in P$. Also,  by the involutive property, for all $x \in \Bbb R^n$ we will have
$$\phi^*_u = \psi^{**}_u = \psi_u,$$
$$\phi_u(\nabla \psi_u(x)) = \langle x,\nabla \psi_u(x) \rangle - \psi_u(x) \ \ \textup{ and } \ \  \nabla \psi_u(x) = s \Leftrightarrow \nabla \phi_u(s) = x.$$

Summarizing, the Legendre transform $u \to \psi_u^*=\phi_u$ gives a one-to-one correspondence between elements of $\mathcal H_\omega^T$ and the class $\mathcal C(P)$:
$$\mathcal C(P) := \{f:P \to \Bbb R \textup{ is convex and } f - \phi_0 \in C^\infty(\overline{P})\},$$
where $\phi_0 = \psi_0^*$. In addition, as pointed out in \cite[Proposition 4.5]{Gu14},  there is a one-to-one correspondence between $(S^1)^n$-invariant elements of $\textup{PSH}(X,\omega)\cap L^\infty$ and convex functions $f:P \to \Bbb R$ for which $f - \phi_0$ is only bounded on $P$. In particular, $\phi_0 \in L^\infty(P)$.

\paragraph{The $L^1$ geometry of toric metrics.} We briefly describe the $L^1$ geometry of $\mathcal H_\omega$ restricted to $\mathcal H_u^T$.
Let $[0,1]\ni t \to u_t \in \mathcal H_\omega^T$ be a smooth curve  connecting $u_0,u_1\in \mathcal H_\omega^T$. Taking the Legendre transform of the potentials $\psi_{u_t}$ we obtain
\begin{equation}\label{eq: phi_u_t_def}
\phi_{u_t}(s):=\sup_{x \in \Bbb R^n} \left\{ \langle x,s \rangle -\psi_{u_t}(x) \right\}
=\langle x_t,s \rangle -\psi_{u_t}(x_t), \ s \in \Bbb R^n,
\end{equation}
where $x_t=x_t(s)$ is such that $\nabla \psi_{u_t}(x_t)=s$. 

After elementary but tedious calculations (see \cite[Section 3]{DGS21}) we conclude that
\begin{equation}\label{eq: L^1_isom}
\int_{X} |\dot u_t| \omega_{u_t}^n = \pi^n n! \int_{\Bbb R^n} |\dot \psi_{u_t}| MA_{\Bbb R}(\psi_{u_t})= \pi^n n! \int_P |\dot{\phi}_{u_t}(s)| d \mu(s),
\end{equation}
Hence the Legendre transform sends the $L^1$ geometry of $\mathcal H_\omega^T$ to the flat $L^1$ geometry of convex functions on $P$. In the particular case when $u_t := t$, we obtain the following useful formula about the K\"ahler volume:
\begin{equation}\label{eq: volume_formula}
\int_X \omega^n = V = \pi^n n! \int_P d\mu.
\end{equation}

Regarding the underlying path length metrics,  \eqref{eq: L^1_isom} has the following important consequence:
\begin{theorem}\label{thm: d_1_toric}Suppose $u_0,u_1 \in \mathcal H_\omega^{T}$. Then
\begin{equation}\label{eq: d_1_toric}
d_1(u_0,u_1)=\frac{1}{ \mu(P)} \int_P |\phi_{u_0}(s)-\phi_{u_1}(s)| d \mu(s),
\end{equation}
where $\mu(P)$ is the Lebesgue measure of $P$.
\end{theorem}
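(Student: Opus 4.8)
The plan is to transfer the Finsler length computation through the Legendre correspondence $u \mapsto \phi_u$ and then solve, explicitly, the resulting minimization problem for the flat $L^1$ geometry on $\mathcal C(P)$. Recall that by definition $d_1$ is the path-length (pseudo-)metric of the Finsler norm \eqref{eq: Lp_metric_def}, so that for any smooth path $[0,1] \ni t \mapsto u_t \in \mathcal H_\omega^T$ joining $u_0$ and $u_1$ its length is $\int_0^1 \frac{1}{V}\int_X |\dot u_t|\,\o_{u_t}^n\,dt$. Combining \eqref{eq: L^1_isom} with the volume identity $V = \pi^n n!\,\mu(P)$ from \eqref{eq: volume_formula}, the common factor $\pi^n n!$ cancels and this length equals $\frac{1}{\mu(P)}\int_0^1\!\int_P |\dot\phi_{u_t}(s)|\,d\mu(s)\,dt$. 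Thus, up to the normalization $1/\mu(P)$, computing $d_1$ on $\mathcal H_\omega^T$ reduces to computing the path-length distance of the \emph{flat} $L^1$ metric on the convex class $\mathcal C(P)$.

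The heart of the argument is that this flat distance is given explicitly by the $L^1$ norm of the difference. For the lower bound, given any competitor path $t\mapsto \phi_{u_t}$ I would apply Tonelli's theorem to exchange the order of integration (the integrand being nonnegative) and then the pointwise estimate $\int_0^1 |\partial_t \phi_{u_t}(s)|\,dt \ge \big|\int_0^1 \partial_t\phi_{u_t}(s)\,dt\big| = |\phi_{u_1}(s)-\phi_{u_0}(s)|$, valid for $\mu$-a.e.\ $s \in P$; integrating over $P$ yields $\int_0^1\!\int_P|\dot\phi_{u_t}|\,d\mu\,dt \ge \int_P |\phi_{u_1}-\phi_{u_0}|\,d\mu$. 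For the matching upper bound I would use the linear interpolation $\phi_t := (1-t)\phi_{u_0}+t\,\phi_{u_1}$: being a convex combination of convex functions it is convex, and $\phi_t-\phi_0 = (1-t)(\phi_{u_0}-\phi_0)+t(\phi_{u_1}-\phi_0) \in C^\infty(\overline P)$, so $\phi_t \in \mathcal C(P)$ and corresponds, under the Legendre bijection $\mathcal H_\omega^T \leftrightarrow \mathcal C(P)$, to a smooth toric path $\{u_t\}$. Along it $\dot\phi_{u_t} = \phi_{u_1}-\phi_{u_0}$ is independent of $t$, so its length is exactly $\frac{1}{\mu(P)}\int_P |\phi_{u_1}-\phi_{u_0}|\,d\mu$. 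Together the two bounds identify the toric path-length distance with the right-hand side of \eqref{eq: d_1_toric}.

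The remaining, and genuinely delicate, point is that this value, obtained by minimizing only over \emph{toric} paths, coincides with $d_1(u_0,u_1)$, which a priori is an infimum over all smooth paths in $\mathcal H_\omega$, including non-invariant ones. The inequality $d_1(u_0,u_1) \le \frac{1}{\mu(P)}\int_P|\phi_{u_0}-\phi_{u_1}|\,d\mu$ is immediate, since toric paths are admissible competitors. For the reverse inequality I would invoke the equivariance of the construction: the torus $\Bbb T^n$ acts on $(\mathcal E^1,d_1)$ by isometries fixing the invariant potentials $u_0,u_1$, and the finite-energy geodesic segment joining two points of $\mathcal E^1$ is unique and realizes $d_1$, so by uniqueness this segment is itself $\Bbb T^n$-invariant and hence realizes $d_1(u_0,u_1)$ within $\mathcal H_\omega^T$ (one may also simply cite \cite{Da15} for this reduction). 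I expect this reduction to be the main obstacle: the naive alternative of symmetrizing an arbitrary path by averaging its potentials over $\Bbb T^n$ does not obviously decrease the $L^1$ length, because the Monge--Amp\`ere volume $\o_{u_t}^n$ depends nonlinearly on $u_t$, so controlling the global infimum really requires the uniqueness and equivariance of $d_1$-geodesics rather than a direct averaging estimate.
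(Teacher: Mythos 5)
The paper gives no proof of Theorem \ref{thm: d_1_toric}: it is recalled from \cite[Section 3]{DGS21} and presented as a consequence of \eqref{eq: L^1_isom}. Your computation of the infimum of lengths over \emph{toric} paths is correct and is exactly the intended use of \eqref{eq: L^1_isom} and \eqref{eq: volume_formula}: the Tonelli lower bound, together with the linear interpolation $\phi_t=(1-t)\phi_{u_0}+t\phi_{u_1}$ (which lies in $\mathcal C(P)$ and hence corresponds to a smooth path in $\mathcal H^T_\omega$), identifies that infimum with $\frac{1}{\mu(P)}\int_P|\phi_{u_0}-\phi_{u_1}|\,d\mu$. You also correctly isolate the genuine issue, namely that $d_1$ is an infimum over all paths in $\mathcal H_\omega$, not just invariant ones, and that naive $\Bbb T^n$-averaging does not obviously decrease the $L^1$ length.

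Your resolution of that issue, however, has a gap. Knowing that the canonical finite-energy geodesic $\{w_t\}$ joining $u_0$ to $u_1$ is $\Bbb T^n$-invariant does not yet give the lower bound: a priori $\{w_t\}$ is only a path of bounded invariant $\omega$-psh functions (in general merely $C^{1,1}$, with $\omega_{w_t}\ge 0$), so it is not a competitor path in $\mathcal H^T_\omega$, \eqref{eq: L^1_isom} does not apply to it, and ``realizes $d_1$ within $\mathcal H^T_\omega$'' is not yet meaningful --- one would still have to compute its length and show it equals the toric infimum, which is essentially the statement being proved. (Also, metric geodesics of the $L^1$-type metric $d_1$ are far from unique; what is unique is the envelope/PDE geodesic, and that is what your equivariance argument should invoke.) The gap closes once you observe that in the toric setting the PDE geodesic \emph{is} the linear path you already constructed: the geodesic equation is Legendre-dual to $\ddot\phi_t=0$, so the path with $\phi_{u_t}=(1-t)\phi_{u_0}+t\phi_{u_1}$ is a smooth Mabuchi geodesic lying in $\mathcal H^T_\omega$, and by \cite{Da15} such geodesics are $d_1$-length-minimizing among \emph{all} paths in $\mathcal H_\omega$; its length was already computed to be the right-hand side of \eqref{eq: d_1_toric}. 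Alternatively, one can avoid path minimization altogether via the Pythagorean formula $d_1(u_0,u_1)=I(u_0)+I(u_1)-2I(P(u_0,u_1))$ of \cite{Da15}: the Legendre transform of the rooftop envelope $P(u_0,u_1)$ is $\max(\phi_{u_0},\phi_{u_1})$, and \eqref{eq: Aubin_Yau_toric} together with the pointwise identity $|a-b|=2\max(a,b)-a-b$ then gives \eqref{eq: d_1_toric} directly.
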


As a consequence of the above theorem, one concludes that the $d_1$-completion of $\mathcal H^T_\omega$ can be identified with the space of convex functions on $P$, that are also integrable on $P$.

\paragraph{The $I$ and $J$ functionals of toric metrics.} In this paragraph we analyze the $I$ and $J$ functionals in terms of the Legendre transform.

The $I$ energy is essentially the Lebesgue integral, after applying the Legendre transform. Indeed, let $[0,1] \ni t \to v_t \in \mathcal H_\omega^T$ be any smooth curve connecting $v_0 =0$ and $v_1 = u$. After elementary, but tedious calculations (see \cite[Section 3]{DGS21}), one obtains the following formula:
\begin{flalign}\label{eq: Aubin_Yau_toric}
I(u) &= I(u)-I(0) =\frac{-1}{\mu(P)}  \int_0^1 \int_{P} \dot \phi_{v_t}(s) d \mu(s) dt=\frac{-1}{\mu(P)} \int_P (\phi_u(s) - \phi_0(s))d \mu(s).
\end{flalign}

Since $\phi_0 \in L^\infty(P)$ \cite[Proposition 4.5]{Gu14}, we conclude that there exists $C = C(X,\omega)>0$ such that 
\begin{equation}\label{eq: I_magn_est}
 \frac{-1}{\mu(P)} \int_P \phi_u d\mu - C \leq I(u) \leq \frac{-1}{\mu(P)} \int_P \phi_u d\mu + C, \ \ u \in \mathcal H_\omega^T.
\end{equation}

Regarding the $J$ energy, we can express its magnitude using the Legendre transform. This will be sufficient for our analysis (see \cite[Proposition 3.2]{DGS21}):
\begin{prop}\label{prop: J_toric} There exists $C:=C(X,\omega)>0$ such that for all $u \in \mathcal H_\o^T$ with $I(u)=0$ we have
$$-\inf_P \phi_u - C \leq J(u) = \frac{1}{V}\int_X u \omega^n \leq -\inf_P \phi_u + C.$$
\end{prop}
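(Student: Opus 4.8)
The plan is to run everything through the Legendre-transform dictionary and to show that the two (individually unbounded) quantities $J(u)$ and $-\inf_P\phi_u$ share the \emph{same} leading term, which turns out to be the single number $\psi_u(0)$, and differ from it by an amount bounded uniformly in $u$. First note that the asserted equality $J(u)=\frac{1}{V}\int_X u\,\o^n$ is immediate: by definition $J=\frac{1}{V}\int_X u\,\o^n-I$ and $I(u)=0$ by hypothesis, so the entire content of the proposition is the two-sided bound.

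Two identities drive the argument. From the definition of the Legendre transform, $\psi_u(0)=\sup_{s}\big(\langle 0,s\rangle-\phi_u(s)\big)=-\inf_P\phi_u$, so that $-\inf_P\phi_u=\psi_u(0)$; recall also the normalization $\psi_0(0)=0$. On the other hand, writing $u\circ E=\psi_u-\psi_0$ and using that $\o^n$ pushes forward under $L$ to $\pi^n n!\,MA_{\Bbb R}(\psi_0)$ (the same pushforward computation that underlies \eqref{eq: L^1_isom}) together with the volume normalization \eqref{eq: volume_formula}, I obtain
\begin{equation*}
J(u)=\frac{1}{V}\int_X u\,\o^n=\frac{1}{\mu(P)}\int_{\Bbb R^n}\big(\psi_u-\psi_0\big)\,MA_{\Bbb R}(\psi_0),
\end{equation*}
where $MA_{\Bbb R}(\psi_0)$ has total mass $\mu(P)$. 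Since $\tfrac{1}{\mu(P)}MA_{\Bbb R}(\psi_0)$ is thus a probability measure and $\psi_0(0)=0$, subtracting the common leading term gives the clean rearrangement
\begin{equation*}
J(u)+\inf_P\phi_u \;=\; J(u)-\psi_u(0) \;=\; \frac{1}{\mu(P)}\int_{\Bbb R^n}\Big[\big(\psi_u(x)-\psi_u(0)\big)-\big(\psi_0(x)-\psi_0(0)\big)\Big]\,MA_{\Bbb R}(\psi_0),
\end{equation*}
so it suffices to bound the right-hand side by a constant depending only on $(X,\o)$.

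The crucial point is a $u$-uniform Lipschitz-type control of the centered potential. Put $R:=\sup_{s\in P}|s|<\infty$. For any toric potential $v$, using $\psi_v(x)=\sup_{s\in P}\big(\langle x,s\rangle-\phi_v(s)\big)$ and $\psi_v(0)=-\inf_P\phi_v$, the inequality $-\phi_v(s)\le\psi_v(0)$ yields the upper bound $\psi_v(x)-\psi_v(0)\le\sup_{s\in P}\langle x,s\rangle\le R|x|$, while testing the supremum against a near-minimizer of $\phi_v$ gives $\psi_v(x)-\psi_v(0)\ge -R|x|$; hence $|\psi_v(x)-\psi_v(0)|\le R|x|$ with $R$ independent of $v$. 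Applying this with $v=u$ and with $v=0$ bounds the integrand above by $2R|x|$, so that
\begin{equation*}
\big|J(u)+\inf_P\phi_u\big|\;\le\;\frac{2R}{\mu(P)}\int_{\Bbb R^n}|x|\,MA_{\Bbb R}(\psi_0)\;=\;\frac{2R}{\mu(P)}\int_P|\nabla\phi_0(s)|\,d\mu(s),
\end{equation*}
the last equality by the change of variables $x=\nabla\phi_0(s)$. The only genuinely technical step left is the finiteness of this constant: since $\phi_0$ is convex and $\phi_0\in L^\infty(P)$, slicing $P$ by lines parallel to each coordinate axis shows every $\partial_i\phi_0$ is integrable — a bounded convex function on a bounded interval has derivative whose $L^1$-norm is controlled by its oscillation — whence $\nabla\phi_0\in L^1(P)$. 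This produces the admissible constant $C:=\frac{2R}{\mu(P)}\int_P|\nabla\phi_0|\,d\mu$.

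The main obstacle, and the conceptual heart of the argument, is the observation in the second paragraph that the divergent parts of $J(u)$ and of $-\inf_P\phi_u$ coincide and are both equal to $\psi_u(0)$. Once this common term is isolated, the surviving integrand is governed purely by the geometry of $P$ through the support-function bound $R|x|$, \emph{uniformly} in $u$, and the problem collapses to checking that a single first moment of $MA_{\Bbb R}(\psi_0)$ is finite. If one instead tried to estimate $\psi_u-\psi_0$ directly, the natural bound $\sup_P|\phi_u-\phi_0|$ depends on $u$ and diverges, so extracting $\psi_u(0)$ first is exactly what makes the estimate uniform.
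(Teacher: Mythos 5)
Your proof is correct, and it is self-contained where the paper is not: the paper simply defers this proposition to [DGS21, Proposition 3.2] without reproducing an argument. Each step checks out. The identity $\psi_u(0)=-\inf_P\phi_u$ follows from $\psi_u=\phi_u^{**}=\phi_u^*$ evaluated at $0$; the integral representation $J(u)=\frac{1}{\mu(P)}\int_{\Bbb R^n}(\psi_u-\psi_0)\,MA_{\Bbb R}(\psi_0)$ is the pushforward computation underlying \eqref{eq: L^1_isom} and \eqref{eq: volume_formula} applied with the background metric; the two-sided bound $|\psi_v(x)-\psi_v(0)|\le R|x|$ with $R=\sup_P|s|$ is exactly the support-function estimate for a convex function whose gradient image is $P$, uniform in $v$; and the finiteness of $\int_P|\nabla\phi_0|\,d\mu$ follows from your slicing argument since $\phi_0\in L^\infty(P)$ (one could also note it equals $\frac{1}{\pi^n n!}\int_X|L|\,\omega^n$, which converges because $\log|z_j|$ is integrable against a smooth volume form). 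Your route differs from the standard one: the usual argument (as in [DGS21]) compares $\frac1V\int_X u\,\omega^n$ with $\sup_X u$ using the uniform bound $|\frac1V\int_X w\,\omega^n-\sup_X w|\le C(X,\omega)$ valid for all $w\in\textup{PSH}(X,\omega)$ by compactness of sup-normalized potentials, and then identifies $\sup_X u=\sup_{\Bbb R^n}(\psi_u-\psi_0)=\sup_P(\phi_0-\phi_u)$, which lies within $\|\phi_0\|_{L^\infty(P)}$ of $-\inf_P\phi_u$. What your version buys is that it stays entirely inside the toric/convex-analytic dictionary with an explicit constant $C=\frac{2R}{\mu(P)}\int_P|\nabla\phi_0|\,d\mu$, at the cost of the extra first-moment finiteness check; what the standard version buys is that the only non-toric input is a general pluripotential compactness fact, with the remaining steps being one-line Legendre identities.
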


Putting together Theorem \ref{thm: d_1_toric}, \eqref{eq: I_magn_est}, and Proposition \ref{prop: J_toric}, we obtain the equivalence of the inequalities \eqref{eq: main_thm} and \eqref{eq: main_thm_convex}, as desired at the beginning of this subsection.

\subsection{The Ross--Witt Nystr\"om correspondence}

In this subsection we recall the correspondence between geodesic rays and  test curves, as established in \cite[Section 3.1]{DX20} and \cite[Section 4]{DDL3}, building on \cite{RWN14}.

A \emph{sublinear subgeodesic ray} is a subgeodesic ray $(0,+\infty) \ni t \mapsto u_t \in \textup{PSH}(X,\omega)$ (notation $\{u_t\}_{t >0}$) such that $u_t \to_{L^1} u_0: = 0$ as $t \to 0$, and there exists $C\in \mathbb{R}$ such that $u_t(x) \leq C t$ for all $t\geq 0$, $x \in X$.

A \emph{psh geodesic ray} is a sublinear subgeodesic ray that additionally satisfies the following maximality property: for any $0 < a < b$, the subgeodesic $(0,1) \ni t \mapsto v^{a,b}_t:=u_{a(1-t) + bt} \in \textup{PSH}(X,\omega)$ can be recovered in the following manner:
\begin{equation}\label{eq: vabt_eq}
v^{a,b}_t:=\sup_{h \in \mathcal{S}}{h_t}\,,\quad t \in [0,1]\,,
\end{equation}
where $\mathcal{S}$ is the set of subgeodesics $(0, 1) \ni  t \to h_t \in \textup{PSH}(X,\omega)$ such that
\[
\lim_{t \searrow 0}h_t\leq  u_a\,,\quad \lim_{t \nearrow 1}h_t\leq u_b\,.
\]

We note the following properties of the map $v \mapsto \sup_X v$ along rays, established in \cite[Lemma 3.1]{DX20}:
\begin{lemma}\label{lem: suplinear} For any psh geodesic ray $\{u_t\}_t$, the map $t \mapsto \sup_X u_t$ is linear. For sublinear subgeodesics, the map $t \mapsto \sup_X u_t$ is convex.
\end{lemma}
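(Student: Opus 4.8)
The plan is to treat the two assertions separately: the convexity statement follows directly from the fiberwise convexity of subgeodesics, while linearity is obtained by supplementing convexity with a reverse inequality, and it is precisely here that the maximality in the definition of a geodesic ray must enter.

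For the first assertion, recall that a (sublinear) subgeodesic ray is by definition a curve for which the associated function $U(x,z):=u_{\textup{Re}\,z}(x)$ is plurisubharmonic relative to $\pi^{*}\omega$ (the pull-back of $\omega$) on $X\times\{0<\textup{Re}\,z\}$. Restricting $U$ to a fiber $\{x\}\times\{0<\textup{Re}\,z\}$, a complex curve on which $\pi^{*}\omega$ vanishes, shows that $z\mapsto U(x,z)$ is subharmonic; since $U$ is $S^{1}$-invariant, $t\mapsto u_t(x)$ is convex for each fixed $x\in X$. As each $u_t$ is usc on the compact manifold $X$ and bounded above by $Ct$, the quantity $\sup_X u_t$ is finite and attained, and being the pointwise supremum over $x$ of the convex functions $t\mapsto u_t(x)$, it is itself convex. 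This proves the convexity statement, and in particular it applies to geodesic rays, which are subgeodesics.

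To prove linearity for a geodesic ray, set $M(t):=\sup_X u_t$; by the above $M$ is convex, and $M(0)=\sup_X u_0=0$. It remains to establish concavity, i.e. for all $0\le a<b$ and $t\in[a,b]$ the inequality $M(t)\ge \ell(t)$, where $\ell$ is the affine function with $\ell(a)=M(a)$, $\ell(b)=M(b)$. After subtracting from $u_t$ the function $\ell(t)$, which is $t$-affine and $x$-constant, hence pluriharmonic and $\omega$-psh, and whose subtraction preserves both the $\omega$-psh property and the maximality envelope \eqref{eq: vabt_eq}, we may assume $\ell\equiv 0$, so that $u_a,u_b\le 0$ and $\sup_X u_a=\sup_X u_b=0$, and the goal becomes $\sup_X u_t\ge 0$ on $[a,b]$.

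The main obstacle is exactly this reverse inequality, and I expect it to require maximality rather than convexity alone: a convex $M$ with $M(a)=M(b)=0$ can dip below zero in the interior, and indeed every competitor $h$ in \eqref{eq: vabt_eq} already satisfies $\sup_X h_t\le 0$ by its own fiberwise convexity, so the envelope yields only $\sup_X u_t\le 0$. My plan is to argue by contradiction: if $M(t_0)<0$ for some interior $t_0$, then on the slice $\{\textup{Re}\,z=t_0\}$ the fiberwise supremum of the maximal solution $U$ lies strictly below its boundary value $0$, and I would exploit this gap to build a subgeodesic $\chi$ on $X\times\{a<\textup{Re}\,z<b\}$ with boundary values $\le u_a$ and $\le u_b$ but with $\chi>U$ at one interior point, contradicting the fact that $U$, being the geodesic, dominates every such competitor. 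Equivalently, one shows that the subharmonic, $S^{1}$-invariant function $z\mapsto\sup_X U(\cdot,z)$ is in fact harmonic because $U$ solves the homogeneous complex Monge--Amp\`ere equation $(\pi^{*}\omega+i\ddbar U)^{n+1}=0$. The delicate point, the heart of the matter, is the construction of this competitor (or, in the analytic formulation, the use of the Monge--Amp\`ere foliation along which $U$ is harmonic to transport the fiberwise maximum); a convenient way to make it rigorous is to first prove the claim for geodesics joining continuous potentials, where a genuine maximum principle on $X\times\overline{\{a\le\textup{Re}\,z\le b\}}$ applies, and then pass to the general case by a decreasing approximation. Combined with the convexity already established, this gives $M(t)=\ell(t)$ on every $[a,b]$, whence $M$ is affine.
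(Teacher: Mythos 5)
Your first half is correct and is exactly the paper's argument: $t\mapsto u_t(x)$ is convex for each fixed $x$ by $t$-convexity of subgeodesics, and a pointwise supremum of convex functions is convex. The linearity half, however, has a genuine gap. You correctly identify that the whole content is the reverse inequality $\sup_X u_t\ge \ell(t)$ and that maximality must enter, but none of the three mechanisms you sketch delivers it, and the first is structurally confused: by \eqref{eq: vabt_eq} the geodesic \emph{is} the supremum of the admissible competitors, so $h_t\le v^{a,b}_t$ for every competitor $h$ and no competitor can ever exceed $U$ at an interior point. What your contradiction actually requires is a single competitor $h$ with $\sup_X h_{t_0}\ge \ell(t_0)$, and constructing one from the hypotheses $u_a,u_b\le 0$, $\sup_X u_a=\sup_X u_b=0$ is precisely the difficulty: the boundary constraints $h\le u_a$, $h\le u_b$ are pointwise in $x$, the two suprema may be approached along disjoint sequences of points, and the obvious competitors such as $\max(u_a-Ct,\,u_b-C(1-t))$ only yield $\sup_X u_t\ge -C\min(t,1-t)$ with $C$ controlled by $\sup_X|u_a-u_b|$, which can be large or infinite for unbounded rays. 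Your fallback assertions --- that $z\mapsto\sup_X U(\cdot,z)$ is harmonic because $(\pi^*\omega+i\ddbar U)^{n+1}=0$, or that a ``genuine maximum principle'' settles the continuous case --- restate the desired conclusion rather than prove it: the supremum over $X$ of a family of functions harmonic along Monge--Amp\`ere leaves is not automatically harmonic, and the maximum principle produces upper bounds for psh functions, i.e.\ the convexity direction you already have.

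For comparison, the paper closes this gap by citation: \cite[Theorem 1]{Da17} gives the linearity of $t\mapsto \sup_X u_t$ for bounded geodesics (proved there via the partial Legendre transform and the Kiselman minimum principle, not via a competitor construction of the type you propose), and the general case follows by approximating a psh geodesic ray by bounded geodesics together with the continuity of $u\mapsto\sup_X u$ in the weak $L^1$-topology. An alternative self-contained route, available from the machinery already set up in Theorem \ref{thm: max_test_curve_ray_duality}, is to write $u_t=\sup_{\tau}(\hat u_\tau+t\tau)$ with each $\hat u_\tau$ maximal; since $P[v]\le 0$ and $P[v]\ge v-\sup_X v$ force $\sup_X P[v]=0$, one gets $\sup_X\hat u_\tau=0$ whenever $\hat u_\tau\not\equiv-\infty$, hence $\sup_X u_t=\sup_{\tau<\tau^+_u}(t\tau)=t\,\tau^+_u$. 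Either way, the reverse inequality requires an input that your proposal does not supply.
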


The statement for subgeodesics follows from $t$-convexity.  To argue the statement for psh geodesic rays, one can use \cite[Theorem~1]{Da17} together with approximation by bounded geodesics, and the continuity of $u \mapsto \sup_X u$ in the weak $L^1$-topology of $\textup{PSH}(X,\omega)$.\medskip

A map $\mathbb{R}\ni \tau \mapsto \psi_\tau\in \textup{PSH}(X, \omega)$ is a \emph{psh test curve}, denoted $\{\psi_\tau\}_{\tau \in \mathbb R}$, if \vspace{0.15cm}\\
(i) $\tau\mapsto \psi_\tau(x)$ is concave, decreasing and usc for any $x\in X$. \vspace{0.15cm}\\
(ii) $\psi_\tau\equiv -\infty$ for all $\tau$ big enough, and $\psi_\tau$ increases a.e. to $0$ as $\tau \to -\infty$.\medskip

\noindent Condition (ii) allows for the introduction of the following constant:
\begin{equation}\label{eq: psi+_tau_def}
\tau_\psi^+ := \inf\{\tau \in \mathbb{R} : \psi_\tau \equiv -\infty\}\,.
\end{equation}

\begin{remark}\label{rem: rays_pot} We adopt the following notational convention: psh test curves will always be parametrized by $\tau$, whereas rays will be parametrized by $t$. Hence $\{\psi_t\}_t$ will always refer to some type of ray, whereas  $\{\phi_\tau\}_\tau$ will refer to some kind of test curve. As recalled below, rays and test curves are dual to each other, so one should think of the  parameters $t$ and $\tau$ as duals to each other as well.
\end{remark}

Given $v \in \textup{PSH}(X,\omega)$ we consider the following notion of envelope:
\begin{equation}\label{eq: P[v]_def}
P[v] = \textup{usc}\big( \sup\{w \in \textup{PSH}(X,\omega), \ w \leq 0, \ w \leq v + C \textup{ for some } C >0\} \big)
\end{equation}

A  psh test curve $\{\psi_\tau \}_\tau$  can have the following properties: \vspace{0.1cm}\\
(i) $\{\psi_{\tau}\}_\tau$ is \emph{maximal} if $P[\psi_\tau] =\psi_\tau, \ \tau \in \mathbb{R}$.\vspace{0.1cm}\\
(ii) $\{\psi_{\tau}\}_\tau$ has \emph{finite energy} if
    \begin{equation}\label{eq: fetestcurve_def}
        \int_{-\infty}^{\tau^+_\psi} \left( \int_X \omega_{\psi_\tau}^n-\int_X \omega^n \right) \,\mathrm{d}\tau >-\infty\,.
    \end{equation}
(iii) We say $\{\psi_{\tau}\}_\tau$ is \emph{bounded} if $\psi_\tau = 0$ for $\tau$ small enough. In this case, we introduce the following constant, complementing \eqref{eq: psi+_tau_def}:
 \begin{equation}\label{eq: psi-_tau_def}
\tau_\psi^- := \sup\left\{\,\tau \in \mathbb{R} : \psi_\tau \equiv 0\,\right\}\,.
\end{equation}

Above we followed the convention $P[-\infty]=-\infty$, and we note that bounded test curves are clearly of finite energy.

 We recall the \emph{Legendre transform}, helping establish the duality between various types of maximal test curves and geodesic rays. Given a convex function $f:[0, +\infty)\rightarrow \mathbb{R}$, its Legendre transform is defined as 
 \begin{equation}
 \hat f(\tau):= \inf_{t \geq 0} (f(t)-t\tau)=\inf_{t > 0} (f(t)-t\tau)\,,\quad \tau\in \mathbb{R}\,.
 \end{equation}
The \emph{(inverse) Legendre transform} of a decreasing concave function $g:\mathbb{R}\rightarrow \mathbb{R}\cup \{-\infty\}$ is
\begin{equation}\label{eq: inverse_Lag_tran_def}
\check{g}(t):=\sup_{\tau \in \mathbb{R}} (g(\tau)+t\tau)\,, \quad t \geq 0\,.
\end{equation}
There is a sign difference in our choice of Legendre transform compared to the convex analysis literature, however this choice is more suitable for us.

Given a sublinear subgeodesic ray $\{\phi_t\}_t$ (psh test curve $\{\psi_\tau\}_\tau$), we can associate its (inverse) Legendre transform at $x \in X$ as 
\begin{equation}\label{eq: Leg_transf_def_ray_test_curve}
    \begin{aligned}
\hat \phi_\tau(x) := \inf_{t>0}(\phi_t(x) - t\tau)\,,& \quad \tau \in \mathbb R\,,\\
\check \psi_t(x) := \sup_{\tau \in \mathbb R}(\psi_\tau(x) + t\tau)\,,& \quad t> 0\,.
\end{aligned}
\end{equation}

The Ross--Witt Nystrom correspondence describes the duality between various types of rays and maximal test curves, proved in \cite[Theorem 3.7]{DX20}:
\begin{theorem}\label{thm: max_test_curve_ray_duality}
The Legendre transform $\{\psi_\tau\}_\tau \mapsto \{\check \psi_t\}_t$ gives a bijective map with inverse $\{\phi_t\}_t \mapsto \{\hat \phi_\tau\}_\tau$ between:\vspace{0.1cm}\\
(i) psh test curves and sublinear subgeodesic rays,\vspace{0.1cm}\\
(ii) maximal psh test curves and psh geodesic rays,\vspace{0.1cm}\\
(ii)\cite{RWN14} maximal bounded test curves and bounded geodesic rays. In this case, we additionally have
        \[
        \tau_\psi^- t \leq {\check \psi_t}\leq \tau_\psi^+t\,, \quad t \geq 0\,.
        \]
(iv) maximal finite energy test curves and finite energy geodesic rays. In this case, we additionally have
        \begin{equation}\label{eq: I_RWN_form}
            I\{\psi_t\}:=\frac{I(\check \psi_t)}{t}=\frac{1}{V}\int_{-\infty}^{\tau^+_\psi} \left(\int_X \omega_{\psi_\tau}^n-\int_X \omega^n \right) \,\mathrm{d}\tau+\tau_{\psi}^+,\, t >0.
        \end{equation}
Here $I$ is the Monge--Amp\`ere functional defined in \eqref{eq: I_def}.
\end{theorem}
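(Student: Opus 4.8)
The plan is to prove the stated bijection in four nested layers, starting from the slicewise Legendre duality of \eqref{eq: Leg_transf_def_ray_test_curve} and progressively upgrading it so that it respects the pluripotential structure. First I would establish (i). Given a psh test curve $\{\psi_\tau\}_\tau$, set $\check\psi_t(x)=\sup_\tau(\psi_\tau(x)+t\tau)$ and check the defining properties of a sublinear subgeodesic ray. Convexity of $t\mapsto\check\psi_t(x)$ is automatic, being a supremum of functions affine in $t$. Sublinearity follows from condition (ii) of a test curve: since $\psi_\tau\equiv-\infty$ for $\tau>\tau_\psi^+$ and $\psi_\tau\leq 0$ everywhere, one gets $\check\psi_t(x)\leq\sup_{\tau\leq\tau_\psi^+}t\tau=t\,\tau_\psi^+$, so the constant $C=\tau_\psi^+$ works, and the $L^1$-convergence $\check\psi_t\to 0$ as $t\searrow 0$ follows from $\psi_\tau\nearrow 0$ a.e. as $\tau\to-\infty$. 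The genuinely analytic point is joint $\omega$-plurisubharmonicity: realizing the ray as the lift $\Phi(x,\zeta)=\sup_\tau(\psi_\tau(x)+\tau\,\mathrm{Re}\,\zeta)$ on $X\times S$ with $S$ a strip in $\mathbb{C}$ and $t=\mathrm{Re}\,\zeta$, each $(x,\zeta)\mapsto\psi_\tau(x)+\tau\,\mathrm{Re}\,\zeta$ is $\omega$-psh (the term $\tau\,\mathrm{Re}\,\zeta$ is pluriharmonic), so $\Phi$ is $\omega$-psh after usc regularization, the regularization being harmless thanks to the concave-in-$\tau$, affine-in-$t$ structure. Conversely, for a sublinear subgeodesic ray $\{\phi_t\}_t$ one sets $\hat\phi_\tau(x)=\inf_{t>0}(\phi_t(x)-t\tau)$; concavity, monotonicity and upper semicontinuity in $\tau$ are formal, while $\hat\phi_\tau\equiv-\infty$ for $\tau>C$ is forced by $\phi_t\leq Ct$. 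That the two maps are mutually inverse is the biconjugation identity of convex analysis applied slicewise in $x$, using that subgeodesics are convex in $t$ so that the double transform is the identity.

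The hard part will be (ii), the matching of maximal test curves with psh geodesic rays, since it forces two a priori different extremal operations to agree. Both the envelope $P[\cdot]$ of \eqref{eq: P[v]_def} and the geodesic maximality of \eqref{eq: vabt_eq} are ``largest psh object with prescribed data'' constructions, and the crux is to show that the Legendre transform intertwines them: applying $P$ to each slice $\psi_\tau$ corresponds exactly to replacing a subgeodesic by its geodesic envelope. Concretely, I would prove that $\{\check\psi_t\}_t$ is a psh geodesic ray if and only if $\psi_\tau=P[\psi_\tau]$ for every $\tau$, by transporting the supremum over competitor subgeodesics in \eqref{eq: vabt_eq} through the Legendre duality into the supremum over competitors dominated by $\psi_\tau$ up to a constant that defines $P[\psi_\tau]$. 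The difficulty is commuting these suprema with the transform and controlling the boundary behavior as $t\searrow 0$ and $t\nearrow 1$; this is exactly where the linearity of $t\mapsto\sup_X u_t$ along geodesics from Lemma \ref{lem: suplinear} and the uniqueness of solutions to the homogeneous complex Monge--Amp\`ere equation are needed to close the loop.

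Finally I would specialize to (iii) and (iv). In the bounded case $\psi_\tau=0$ for $\tau\leq\tau_\psi^-$ gives $\check\psi_t(x)\geq\sup_{\tau\leq\tau_\psi^-}t\tau=t\,\tau_\psi^-$, which together with the upper bound $\check\psi_t\leq t\,\tau_\psi^+$ from (i) yields the sandwich $\tau_\psi^- t\leq\check\psi_t\leq\tau_\psi^+ t$, with maximality inherited from (ii). For the finite-energy formula \eqref{eq: I_RWN_form} I would differentiate the Monge--Amp\`ere energy along the ray and use a layer-cake identity rewriting the $t$-derivative of $I(\check\psi_t)$ as the $\tau$-integral of the masses $\int_X\omega_{\psi_\tau}^n$; the concavity in $\tau$ and the cutoff at $\tau_\psi^+$ then let one integrate to obtain the stated expression, with finiteness of the ray energy equivalent to condition \eqref{eq: fetestcurve_def}. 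The technical glue throughout is to first verify every identity for bounded, smooth objects, where the computations are transparent, and then pass to general maximal finite-energy data by monotone approximation and continuity of the functionals involved.
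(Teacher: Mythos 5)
This theorem is not proved in the paper at all: it is recalled verbatim from \cite[Theorem 3.7]{DX20} (with part (iii) going back to \cite{RWN14}), so there is no in-paper argument to compare yours against. Judged on its own terms, your sketch correctly identifies the overall architecture of the proof in those references --- slicewise Legendre duality for (i), matching the two envelope operations for (ii), a sandwich argument for (iii), and an energy computation for (iv) --- but it has genuine gaps at exactly the points where the cited proof has to work hardest.

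First, in the inverse direction of (i) you verify concavity, monotonicity and upper semicontinuity of $\tau\mapsto\hat\phi_\tau(x)$ but never address why $\hat\phi_\tau$ is $\omega$-psh in $x$: an infimum over $t$ of $\omega$-psh functions is not $\omega$-psh in general, and this step requires Kiselman's minimum principle applied to the $S^1$-invariant lift of the subgeodesic over the strip. That is the main analytic content of that direction and cannot be dismissed as ``formal''. Second, your treatment of (ii) is a statement of intent rather than a proof: the equivalence between $P[\psi_\tau]=\psi_\tau$ for all $\tau$ and the envelope characterization \eqref{eq: vabt_eq} of psh geodesics is the hardest part of the theorem, and ``commuting the suprema through the transform'' is precisely what needs a real argument; in \cite{DX20} this occupies most of the proof and rests on properties of the envelopes $P[\cdot]$, not on uniqueness for the homogeneous complex Monge--Amp\`ere equation as you suggest. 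Third, for \eqref{eq: I_RWN_form} the layer-cake computation you propose needs the identity $\int_X\omega_{\psi_\tau}^n=\int_{\{\dot u_0\geq\tau\}}\omega^n$ relating the mass of the slice to the measure of the contact set $\{\hat u_\tau=0\}$; this is a nontrivial theorem (\cite{DT20}, invoked as \eqref{eq: int_est} later in this very paper), and without it the $t$-derivative of $I(\check\psi_t)$ does not match the $\tau$-integral of masses. In short, the proposal is a plausible roadmap to the result of \cite{DX20} and \cite{RWN14}, but not a self-contained proof.
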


\section{Extremizing functions on convex bodies}

Let $P \subset \Bbb R^n$ be a bounded open convex set. Due to convexity of $P$ we have $\textup{int}\overline{P} = P$.

For any $f: P \to \Bbb R$ convex, it is well known that the lsc extension of $f:\overline{P} \to (-\infty,\infty]$ is also convex, possibly taking up the value $\infty$. Since there will be no chance for confusion, we will not distinguish $f$ from its lsc extension. Since $\overline{P}$ is compact, the infimum of $f$ will be always realized on this bigger set, and this will play a key role in our discussion below. 

Before we discuss sharpness and extremizing potentials for Theorem \ref{thm: main_thm_convex_1}(i), let us recall the proof of  this result from \cite{DGS21}:

 \begin{theorem}\cite{DGS21}\label{thm: thm_convex} Let $P \subset \Bbb R^n$ be a bounded open convex set. Then for $\phi \in L^1(P)$ convex and satisfying $\int_P \phi d\mu = 0$, the following inequalities hold:  
\begin{equation}\label{eq: thm_convex} 
 -\frac{2}{n+1}\cdot{\left(\frac{n}{n+1}\right)}^n\inf_P \phi \leq \frac{1}{\mu(P)}\int_P |\phi| d\mu \leq -2\inf_P \phi,
 \end{equation}
where the integration is in terms of the Lebesgue measure.
\end{theorem}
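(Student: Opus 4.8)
The plan is to dispatch the right-hand inequality at once and then concentrate all effort on the left-hand one. For the upper bound, note that $\int_P\phi=0$ forces $\int_P\phi^+=\int_P\phi^-$, so $\frac{1}{\mu(P)}\int_P|\phi|=\frac{2}{\mu(P)}\int_P\phi^-$; since $\phi\ge \inf_P\phi$ pointwise we have $\phi^-\le -\inf_P\phi=:m$, whence $\int_P\phi^-\le m\,\mu(P)$ and the right inequality follows. Everything below aims at the matching lower bound $\int_P\phi^-\ge \frac{1}{n+1}\big(\frac{n}{n+1}\big)^n m\,\mu(P)$; we may assume $\phi\not\equiv 0$, so $m>0$.

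The central idea is to pass from $\phi$ to the volumes of its sublevel sets and exploit convexity through Brunn--Minkowski. Set $A(t):=\mu(\{\phi\le t\})$ and $\Psi:=A^{1/n}$; write $R:=\mu(P)^{1/n}$, so $A(t)\nearrow R^n$ as $t\to\sup_P\phi$ and $A(t)=0$ for $t<-m$. A layer-cake computation rewrites the two parts of $\phi$ purely in terms of $\Psi$:
\[
\int_P\phi^-=\int_{-m}^{0}\Psi(t)^n\,dt=:B,\qquad \int_P\phi^+=\int_{0}^{\infty}\big(R^n-\Psi(t)^n\big)\,dt=:C,
\]
and the normalization $\int_P\phi=0$ becomes the balance relation $B=C$. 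The key structural input is that $\Psi$ is concave: if $x\in\{\phi\le t_1\}$ and $y\in\{\phi\le t_2\}$, then convexity of $\phi$ gives $\tfrac{x+y}{2}\in\{\phi\le \tfrac{t_1+t_2}{2}\}$, so $\{\phi\le\tfrac{t_1+t_2}{2}\}\supseteq\tfrac12\big(\{\phi\le t_1\}+\{\phi\le t_2\}\big)$, and Brunn--Minkowski upgrades this inclusion to midpoint concavity of $\Psi$; monotonicity then yields genuine concavity. This reduces the whole statement to a one-dimensional problem: among concave nondecreasing $\Psi\ge 0$ on $[-m,\infty)$ with $\Psi(-m)=0$, $\sup\Psi=R$ and $B=C$, show $B\ge\frac{1}{n+1}\big(\frac{n}{n+1}\big)^n R^n m$.

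To solve the reduced problem I would parametrize by the single number $\rho:=\Psi(0)=\mu(\{\phi\le0\})^{1/n}\in(0,R]$ and extract two lower bounds for $B$. Concavity with $\Psi(-m)=0$ gives $\Psi(t)\ge \frac{t+m}{m}\rho$ on $[-m,0]$, hence a direct computation yields $B\ge \frac{m\rho^n}{n+1}$. On the other hand, concavity beyond $0$ gives $\Psi(t)\le \frac{t+m}{m}\rho$ for $t\ge 0$, so bounding $R^n-\Psi^n$ from below on the range where $\frac{t+m}{m}\rho\le R$ and integrating produces $C\ge m\,Q(\rho)$ with $Q(\rho)=\frac{n}{n+1}\frac{R^{n+1}}{\rho}-R^n+\frac{\rho^n}{n+1}$; feeding in the balance $B=C$ converts this into a second lower bound $B\ge m\,Q(\rho)$. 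It then remains to verify the elementary inequality $\max\{\frac{\rho^n}{n+1},\,Q(\rho)\}\ge\frac{1}{n+1}\big(\frac{n}{n+1}\big)^n R^n$ for all $\rho\in(0,R]$: the first term already suffices once $\rho\ge\frac{n}{n+1}R$, while for smaller $\rho$ one checks that $Q'(\rho)=\frac{n}{(n+1)\rho^{2}}(\rho^{n+1}-R^{n+1})<0$, so $Q$ is decreasing, and that both expressions meet the target exactly at the junction $\rho=\frac{n}{n+1}R$, which pins down the sharp constant. Equality throughout is realized by the linear profile $\Psi(t)=\frac{n}{(n+1)m}R\,(t+m)$, i.e. the cone potential.

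I expect the main obstacle to be correctly handling the coupling imposed by $\int_P\phi=0$: naive region-by-region estimates lose the balance between the positive and negative parts and yield a constant that is too weak. The device that overcomes this is to insert the constraint $B=C$ into the estimate for $C$, which forces $B$ to be large precisely when $\rho$ is small. The cleanest way to make the convex-geometric reduction rigorous is the Brunn--Minkowski concavity of $t\mapsto\mu(\{\phi\le t\})^{1/n}$, and verifying this concavity together with the measure-theoretic layer-cake identities --- including the harmless cases where the minimum is attained on a set of positive measure (so $\Psi(-m)>0$, which only increases $B$) or where $\phi$ is unbounded above --- is the only genuinely technical point.
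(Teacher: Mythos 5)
Your proof is correct, and its overall architecture matches the paper's: both arguments pass to the distribution function of $\phi$ by layer-cake, establish the $(t+m)^n$ growth of sublevel-set volumes in both directions from the level $0$, insert the balance $\int_P\phi^+=\int_P\phi^-$, and close with a one-parameter optimization whose critical point produces the constant $\frac{1}{n+1}\left(\frac{n}{n+1}\right)^n$. The genuine differences are in the two key inputs. Where you invoke Brunn--Minkowski to get concavity of $t\mapsto\mu(\{\phi\le t\})^{1/n}$, the paper uses the more elementary inclusion \eqref{eq: P_a_P_b_est_0}, i.e.\ the contraction of $\{\phi<b\}$ toward a minimizer $y\in\overline P$, which yields exactly the weaker statement that $a\mapsto\mu(\{\phi<a\})^{1/n}/(a+1)$ is decreasing --- all that is actually needed --- and which additionally pays off later: equality in that inclusion is what drives the characterization of extremizers in Proposition \ref{prop:extr_func} via Lemma \ref{lem: open_same_measure}. (Amusingly, your stronger concave-$n$th-root statement is precisely the tool the paper does deploy in the radial analogue, Lemma \ref{lem: rad_legendre_sublevel}, via \cite{WN17} and \cite{DDL4}.) The second difference is cosmetic: you parametrize by $\rho=\mu(\{\phi\le0\})^{1/n}$ with the adaptive cutoff $t=m(R/\rho-1)$ and a case split at $\rho=\frac{n}{n+1}R$, whereas the paper fixes the cutoff $\frac1n$ and optimizes over $A=\int_{P^\phi_+}\phi\,/\,\mu(P^\phi_-)\ge\frac{1}{n+1}$; the two bookkeeping schemes are equivalent in content and difficulty. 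Your treatment of the technical caveats (minimum attained on a set of positive measure, $\phi$ unbounded above, $\phi\equiv0$) is sound.
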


\begin{proof} First we argue the second inequality. Let 
\begin{equation}P^\phi_- := \{x \in P : \phi<  0\}  \ 
\  \textup{  and } \  \ P^\phi_+ := \{x \in P : \phi \geq 0\} 
\end{equation}
 Since $\int_P\phi=\int_{P^\phi_-}\phi+\int_{P^\phi_+}\phi=0,$ we have
\[\int_P |\phi| = -\int_{P^\phi_-} \phi + \int_{P^\phi_+} \phi = -2\int_{P^\phi_-} \phi.\]
Furthermore, $\int_{P^\phi_-} \phi \geq \mu\big(P^\phi_-\big)\inf_{P^\phi_-} \phi \geq \mu(P) \inf_P \phi$, and the second estimate follows.

We argue the first estimate. For all $a \in \mathbb{R}$, let 
$$P^\phi_a := \{x \in P : \phi(x) <  a\}.$$ 
After re-scaling $\phi$, we can assume without loss of generality that $\inf_P \phi = -1$.  We first claim that 
\begin{equation}\label{eq: claim_est}
\int_P |\phi| \geq -\frac{2}{n+1}\mu\big(P^\phi_-\big).
\end{equation}
Let $y \in \overline {P}$ such that $\phi(y) = \inf_P \phi = -1$.

By convexity of $\phi: \overline{P} \to (-\infty,\infty]$, if $-1< a < b$, we have
\begin{equation} \label{eq: P_a_P_b_est_0}
\frac{b-a}{b+1} y + \frac{a+1}{b+1}P^\phi_b \subset P^\phi_a,  \ \ \textup{ for any } -1 < a < b.
\end{equation}
Taking measures of both sides we arrive at 
\begin{equation} \label{eq: P_a_P_b_est}
\mu\left(P^\phi_b\right) \leq {\left(\frac{b+1}{a+1}\right)}^n\mu\left(P^\phi_a\right), \  \textup{ for any } -1 < a < b.
\end{equation}

Since $\int_P \phi = \int_{P^\phi_+} \phi + \int_{P^\phi_-} \phi = 0$, we have $\int_P |\phi| = 2\int_{P^\phi_-} |\phi|$, allowing to bound $\int_{P} |\phi|$ and proving the claim:
\begin{align}\label{eq: homot_ineq_0}
    \int_P |\phi| &= 2 \int_{P_-} |\phi| = 2\int_{-1}^0 \mu\left(P^\phi_x\right)dx \geq 2\int_{-1}^0 {(1+x)}^n\mu\left(P^\phi_-\right)dx = \frac{2}{n+1}\mu(P^\phi_-),
\end{align}
where we used that $\int f d \mu = \int_0^{+\infty} \mu\{f \geq t\} dt=\int_0^{+\infty} \mu\{f > t\} dt$ for any non-negative $\mu$-measurable $f$, estimate \eqref{eq: P_a_P_b_est} for $-1 < x < 0$, and  $ P^\phi_0 = P^\phi_-$. 

Next we estimate $\int_{P_+}\phi$, applying \eqref{eq: P_a_P_b_est} for $-1 < 0 < x$:
\begin{align} \label{eq: 1/n_ineq}
    \int_{P^\phi_+} \phi &=\int_0^\infty \mu\left(P \setminus P^\phi_x\right)dx\geq \int_0^\frac{1}{n} \mu\left(P \setminus P^\phi_x\right)dx =\int_0^\frac{1}{n} \mu\left(P\right) - \mu\left(P^\phi_x\right)dx\\
    &\geq \int_0^\frac{1}{n} \mu\left(P\right) - {(1+x)}^n\mu\left(P^\phi_-\right)dx \label{eq: homot_ineq}\\
    &= \frac{1}{n}\mu(P) - \frac{{(\frac{1}{n}+1)^{n+1}}}{n+1}\mu(P^\phi_-) + \frac{1}{n+1}\mu(P^\phi_-) \nonumber
\end{align}
We then let $A>0$ such that $\frac{1}{2}\int_{P}|\phi| =\int_{P^\phi_+}|\phi| = \int_{P^\phi_-}|\phi| = A\mu(P^\phi_-)$.  Collecting terms, we arrive at
\[A\mu(P^\phi_-) = \int_{P^\phi_+} \phi\geq \frac{1}{n}\mu(P) - \frac{1}{n}\cdot{\left(\frac{n+1}{n}\right)}^{n}\mu(P^\phi_-) + \frac{1}{n+1}\mu(P^\phi_-),\]
implying
\[\int_P |\phi|=2A\mu(P^\phi_-) \geq \frac{2A}{nA + {\left(\frac{n+1}{n}\right)}^n - \frac{n}{n+1}}\mu(P).\]
The right-hand side is an increasing function of $A$ and by \eqref{eq: claim_est} we know $A \geq \frac{1}{n+1}$.  This means the right hand side is minimized at this value, so
\begin{equation}\label{eq: last_eq}
\int_P |\phi| \geq \frac{\frac{2}{n+1}}{{\left(\frac{n+1}{n}\right)}^n}\mu(P)
= \frac{2}{(n+1)}\cdot{\left(\frac{n}{n+1}\right)}^n\mu(P).
\end{equation}
\end{proof}

\begin{remark}\label{rem: not_interesting} We briefly address the sharpness of the second inequality in \eqref{eq: thm_convex}. For any $P \subset \Bbb R^n$ and $\varepsilon>0$ one can find $\psi_\varepsilon: P \to \Bbb R^+$ convex such that $\mu(\{\psi = 0\})/\mu(P) \geq 1-\varepsilon$ and $\int_X\psi d\mu > 0$. Let $\phi_\varepsilon:= \psi - \int_P \psi$.

Then $\{\psi = 0\}=\{\phi_\varepsilon = \inf_P \phi_\varepsilon\} \subseteq P^\varepsilon_- = \{\phi_\varepsilon < 0\}$. 
This implies that
$$\int_P |\phi_\varepsilon| = -\int_{P_-^\varepsilon} \phi_\varepsilon + \int_{P_+^\varepsilon} \phi_\varepsilon = -2\int_{P_-^\varepsilon} \phi_\varepsilon \geq -2 \mu(\{\psi = 0\}) \inf_P \phi_\varepsilon  \geq -2(1-\varepsilon)\mu(P) \inf_P \phi_\varepsilon .$$
As a result, the last inequality \eqref{eq: thm_convex} is sharp. Moreover, this argument shows that extremizing functions for this equality do not exist.
\end{remark}

Finally, we discuss the existence of extremizers for the first inequality of \eqref{eq: thm_convex}.  To fix notation, we introduce the set of extremizing potentials:
\begin{equation*}
\mathcal W(P) := \bigg\{\psi: P \to \Bbb R \textup{ convex}, \ \   \int_P \psi =0, \ \textup{ and } \ -\frac{2}{n+1}\cdot{\left(\frac{n}{n+1}\right)}^n\inf_P \psi = \frac{1}{\mu(P)}\int_P |\psi|.\bigg\}
\end{equation*}

Notice that $\phi \in \mathcal W(P)$ implies that $r \phi \in \mathcal W(P)$ for $r > 0$. To deal with this, we will often normalize elements of $\mathcal W(P)$ to satisfy $\inf_P \phi = -1$. Using the language of the proof of Theorem \ref{thm: thm_convex} we notice the following special properties of elements of $\mathcal W(P)$:

\begin{prop}\label{prop:extr_func}Let $B \subset \Bbb R^n$ convex open and bounded and  $\phi \in \mathcal W(P)$ and $\inf_P \phi=\phi(y)=-1$, for some $y \in \overline{P}$. 
Then $\sup_P \phi = \frac{1}{n}$ and 
\begin{equation}\label{eq: sublevelset_identity}
P^\phi_a := \{\phi < a\} = \frac{1-an}{1 + n} y + \frac{na+n}{1+n}P, \ \textup{ for any } \ -1 < a < \frac{1}{n}.
\end{equation}
\end{prop}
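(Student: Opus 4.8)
The plan is to run the proof of Theorem~\ref{thm: thm_convex} in reverse: since $\phi\in\mathcal W(P)$ realizes equality in \eqref{eq: thm_convex}, every intermediate inequality in that argument must in fact be an equality, and squeezing out these equality cases will pin down both $\sup_P\phi$ and the sublevel sets. Throughout I normalize $\inf_P\phi=\phi(y)=-1$ as in the statement, so that the extremizer condition reads $\int_P|\phi|=\frac{2}{n+1}\left(\frac{n}{n+1}\right)^n\mu(P)$. Writing $A>0$ for the quantity with $\tfrac12\int_P|\phi|=A\,\mu(P^\phi_-)$ as in the proof, I first observe that the final chain forces $A=\frac1{n+1}$: the lower bound there is strictly increasing in $A$ and equals the extremal value precisely at $A=\frac1{n+1}$, so any larger $A$ would violate the equality. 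Consequently \eqref{eq: claim_est}, and hence the single inequality inside \eqref{eq: homot_ineq_0}, are equalities.

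From here I read off the two families of equality conditions. Equality in \eqref{eq: homot_ineq_0} gives $\mu(P^\phi_x)=(1+x)^n\mu(P^\phi_-)$ for a.e. $x\in(-1,0)$. Moreover the extremal value of $\int_P|\phi|$ forces the lower bound for $\int_{P^\phi_+}\phi$ obtained in \eqref{eq: 1/n_ineq}--\eqref{eq: homot_ineq} to be attained, which yields two further facts: first, discarding the tail $\int_{1/n}^{\infty}\mu(P\setminus P^\phi_x)\,dx$ in \eqref{eq: 1/n_ineq} must cost nothing, so $\mu(P\setminus P^\phi_x)=0$ for a.e. $x>\frac1n$, whence $\sup_P\phi\le\frac1n$; and second, the application of \eqref{eq: P_a_P_b_est} in \eqref{eq: homot_ineq} must be sharp, giving $\mu(P^\phi_x)=(1+x)^n\mu(P^\phi_-)$ for a.e. $x\in(0,\frac1n)$. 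Combining, the relation $\mu(P^\phi_x)=(1+x)^n\mu(P^\phi_-)$ holds for a.e. $x\in(-1,\frac1n)$.

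Next I upgrade this from ``a.e.'' to ``everywhere''. The inclusion \eqref{eq: P_a_P_b_est_0}, read through \eqref{eq: P_a_P_b_est}, says exactly that $g(x):=\mu(P^\phi_x)/(1+x)^n$ is non-increasing on $(-1,\frac1n)$; since a monotone function agreeing a.e. with the constant $\mu(P^\phi_-)$ must equal it at every point, I get $\mu(P^\phi_x)=(1+x)^n\mu(P^\phi_-)$ for all $x\in(-1,\frac1n)$. To show $\sup_P\phi=\frac1n$ I rule out $M:=\sup_P\phi<\frac1n$: on the nondegenerate interval $(M,\frac1n)$ one has $P^\phi_x=P$, so $\mu(P^\phi_x)\equiv\mu(P)$ is constant while $(1+x)^n\mu(P^\phi_-)$ is strictly increasing, a contradiction. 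I then note that the maximizer set $\{\phi=\frac1n\}$ is convex (maximizers of a convex function over a convex set form a convex set), and a convex function constant at its maximum value on a set of positive measure, hence on an open set, would be globally constant, contradicting $\inf_P\phi=-1$; therefore $\mu(\{\phi=\frac1n\})=0$. Since $\{\phi<\frac1n\}$ is open, convex, contained in $P$, and of full measure $\mu(P)$, the elementary fact that an open convex subset of equal finite measure coincides with the ambient open convex set gives $\{\phi<\frac1n\}=P$, i.e. $P^\phi_{1/n}=P$. Letting $x\uparrow\frac1n$ in the identity also yields $\mu(P^\phi_-)=\left(\frac{n}{n+1}\right)^n\mu(P)$.

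Finally I assemble the set identity \eqref{eq: sublevelset_identity}. The inclusion \eqref{eq: P_a_P_b_est_0} is valid with $b=\frac1n$ (its derivation only uses $\phi(y)=-1$ and $\phi(x)<b$), so substituting $P^\phi_{1/n}=P$ gives $\frac{1-an}{n+1}y+\frac{na+n}{n+1}P\subseteq P^\phi_a$ for $-1<a<\frac1n$. The left-hand side is an invertible affine image of $P$, hence open and convex, with Lebesgue measure $\left(\frac{na+n}{n+1}\right)^n\mu(P)$; on the other hand the ``everywhere'' identity gives $\mu(P^\phi_a)=(1+a)^n\mu(P^\phi_-)=\left(\frac{n(a+1)}{n+1}\right)^n\mu(P)$, the same number. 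An inclusion of open convex sets of equal finite measure is an equality, yielding \eqref{eq: sublevelset_identity}. I expect the main obstacle to be the bookkeeping that converts the collection of measure identities into honest set identities: specifically, justifying $\sup_P\phi=\frac1n$ rather than merely $\le\frac1n$, and promoting the almost-everywhere volume relation to a pointwise one so that it can be combined with the $b=\frac1n$ inclusion; once these are in place the affine-image measure computation closes the argument.
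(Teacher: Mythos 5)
Your proposal is correct and follows essentially the same route as the paper: force $A=\frac{1}{n+1}$, track the equality cases in the proof of Theorem \ref{thm: thm_convex} to obtain $\mu(P^\phi_x)=(1+x)^n\mu(P^\phi_-)$ on $(-1,\frac{1}{n})$ together with $\sup_P\phi\leq\frac1n$, and then convert the measure identities into the set identity \eqref{eq: sublevelset_identity} via the inclusion \eqref{eq: P_a_P_b_est_0} and the equal-measure lemma for nested open convex sets; the only real differences are cosmetic (the paper invokes Brunn--Minkowski continuity of $x\mapsto\mu(P^\phi_x)$ where you use monotonicity of $\mu(P^\phi_x)/(1+x)^n$, and it applies Lemma \ref{lem: open_same_measure} for pairs $a<b<\frac1n$ and lets $b\nearrow\frac1n$, whereas you first establish $P^\phi_{1/n}=P$ and apply it once). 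One small slip to repair: the set of maximizers of a convex function over a convex set is \emph{not} convex in general (e.g.\ $|x|$ on $[-1,1]$, whose maximizer set is $\{-1,1\}$), so your justification that $\{\phi=\frac1n\}$ has empty interior is not valid as written; but you do not need it, since a convex function attaining its supremum at an interior point of the open convex set $P$ is constant, which gives $\{\phi=\frac1n\}\cap P=\emptyset$ and hence $P^\phi_{1/n}=P$ directly.
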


\begin{proof}
The argument is a careful analysis of the proof of Theorem \ref{thm: thm_convex} for $\phi \in \mathcal W(P)$ and $\inf_P \phi = -1$. We will also use numerous times that $x \to \mu(P^\phi_x)$ is continuous due to the Brunn-Minkowski inequality ($\phi$ is convex).

Since $\phi \in \mathcal W(P)$ we need to have $A = \frac{1}{n+1}$, otherwise we have strict inequality in \eqref{eq: last_eq}. For similar reasons, we also need that $\mu(P) = \mu (P^\phi_x)$ for $x \geq \frac{1}{n}$, otherwise, we have strict inequality in  \eqref{eq: 1/n_ineq}. This implies that $\sup_P \phi \leq \frac{1}{n}$.

As we have equality in \eqref{eq: homot_ineq_0}, we obtain that $\mu(P^\phi_x) = (1+x)^n \mu(P^\phi_0)$ for $x \in [-1,0]$. Similarly, due to equality in \eqref{eq: homot_ineq} we have that $\mu(P^\phi_x) = (1+x)^n \mu(P^\phi_0)$ for $x \in [0,\frac{1}{n}]$. Putting these last two facts together, we obtain equality in \eqref{eq: P_a_P_b_est}:
$$
\mu\big(P^\phi_b\big) = {\left(\frac{b+1}{a+1}\right)}^n\mu\left(P^\phi_a\right), \  \textup{ for any } -1 < a \leq b < \frac{1}{n}.
$$
Since the measure of the open convex sets on both sides of \eqref{eq: P_a_P_b_est_0} is the same, by Lemma \ref{lem: open_same_measure} below we conclude that $
\frac{b-a}{b+1} y + \frac{a+1}{b+1}P^\phi_b = P^\phi_a$ for any $-1 < a < b< \frac{1}{n}$. Letting $b \nearrow \frac{1}{n}$, we conclude that $
P^\phi_a = \frac{1-an}{1 + n} y + \frac{na+n}{1+n}P$, as desired.

Lastly, we see that $P_a$ is strictly increasing as $a \nearrow \frac{1}{n}$, hence $\sup_P \phi = \frac{1}{n}$.
\end{proof}

\begin{lemma}\label{lem: open_same_measure} Suppose that $A,B \subset \Bbb R^n$, both convex, bounded, open and $B \subset A$. If $\mu(A)=\mu(B)$, then $A=B$.
\end{lemma}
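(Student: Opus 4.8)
The plan is to argue by contradiction, using only that $B$ is open and convex (so that a point outside it can be strictly separated by a hyperplane) together with the openness and finiteness of measure of $A$. First I would record the elementary consequence of the hypotheses: since $B \subseteq A$ and both sets have the same \emph{finite} measure (finiteness coming from boundedness), we get $\mu(A \setminus B) = 0$. The whole task then reduces to showing that $A \setminus B$ cannot contain a set of positive measure, which I would do by exhibiting an honest open subset of $A \setminus B$ whenever $A \neq B$.

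So suppose $A \neq B$. Because $B \subseteq A$, there is a point $p \in A \setminus B$. Since $A$ is open, I fix $r > 0$ with the open ball $U := \{x \in \Bbb R^n : |x-p| < r\} \subseteq A$. The key geometric input is the separation theorem applied to the open convex set $B$ and the point $p \notin B$: there exists a nonzero $v \in \Bbb R^n$ such that $\langle v, x \rangle < \langle v, p \rangle$ for every $x \in B$. (Openness of $B$ is precisely what upgrades the weak separation $\langle v, x \rangle \leq \langle v, p \rangle$ to a strict inequality on all of $B$.)

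Now consider the open half-ball
$$ H := \{\, x \in U : \langle v, x \rangle > \langle v, p \rangle \,\}. $$
It is nonempty, as it contains $p + \varepsilon v$ for all small $\varepsilon > 0$, and being open it has positive Lebesgue measure. It lies in $A$ because $H \subseteq U \subseteq A$, and it is disjoint from $B$ because every $x \in B$ satisfies $\langle v, x \rangle < \langle v, p \rangle$ whereas every $x \in H$ satisfies the reverse strict inequality. Hence $H \subseteq A \setminus B$ with $\mu(H) > 0$, contradicting $\mu(A \setminus B) = 0$. Therefore $A = B$.

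The only step requiring genuine care — and the place where I would invoke a standard reference — is the strict separation of $p$ from the open convex set $B$; everything else is bookkeeping. I would note in passing that convexity of $A$ is never actually used, and that boundedness enters only to guarantee finite measures, so that equality of the measures forces $\mu(A \setminus B) = 0$.
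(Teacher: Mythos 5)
Your proof is correct. The separation step is the only nontrivial input, and it is used properly: for an open convex $B$ and $p \notin B$, weak separation upgrades to strict separation on all of $B$ (if some $x_0 \in B$ achieved equality, openness of $B$ would produce points violating the weak inequality), and the half-ball $H$ you construct is a genuine open subset of $A \setminus B$, contradicting $\mu(A\setminus B)=0$. The paper takes a different, more hands-on route: rather than arguing by contradiction, it shows directly that $A \subseteq B$. Given $x \in A$, openness of $A$ yields a simplex with vertices $y_0,\dots,y_n \in A$ containing $x$ in its interior; since $\mu(A\setminus B)=0$ forces $B$ to be dense in $A$, the vertices can be perturbed to points $z_j \in B$ still capturing $x$ in the interior of their simplex, and convexity of $B$ then gives $x \in B$. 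Both arguments hinge on the same two facts --- $\mu(A\setminus B)=0$ and convexity of $B$ --- but exploit convexity through different tools: the paper via closure under convex hulls of finitely many points (entirely elementary, no separation theorem), you via the supporting-hyperplane theorem (slightly heavier machinery, but arguably cleaner, and it makes transparent your correct observation that convexity of $A$ is never needed --- a feature equally true of, though less visible in, the paper's version). Either proof is perfectly acceptable here.
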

\begin{proof} Let $x \in A$. Since $A$ is open, there exists $y_0,y_1,\ldots,y_n \in A$ such that $x$ is in the interior of the simplex with vertices $y_0,y_1,\ldots,y_n$. Since $\mu(A)=\mu(B) < \infty$, $B$ is dense in $A$. Hence, for all $j \in \{0,\ldots,n\}$ there exists $z_j \in B$ close enough to $y_j$ such that $x$ is in the interior of the simplex with vertices $z_0,z_1,\ldots,z_n$. Since $B$ is convex, it must contain the interior of the simplex with vertices $z_0,z_1,\ldots,z_n \in B$, hence it also contains $x$.
\end{proof}

After carrying out some calculations, we verify that the reverse of Proposition \ref{prop:extr_func} also holds:

\begin{prop}\label{prop: simple_reverse}Let  $\phi \in L^1(P)$ be convex. Suppose there exists $y \in \overline{P}$ such that $P^\phi_a : = \{\phi < a\}= \frac{1-an}{1 + n} y + \frac{na+n}{1+n}P$ for any $-1 < a < \frac{1}{n}.$ Then $\int_P \phi = 0$ and $\phi \in \mathcal W(P)$.
\end{prop}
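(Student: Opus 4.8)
The plan is to read off from the hypothesis the three pieces of data needed to check both claims directly: the range of $\phi$, the volumes of its sublevel sets, and then the two integrals $\int_{\{\phi<0\}}|\phi|$ and $\int_{\{\phi\ge 0\}}\phi$.

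First I would pin down $\inf_P\phi$ and $\sup_P\phi$ from the endpoints of the parameter interval. Writing the dilation factor as $\lambda(a):=\frac{n(a+1)}{n+1}$, the hypothesis reads $P^\phi_a=(1-\lambda(a))\,y+\lambda(a)P$. As $a\searrow -1$ we have $\lambda(a)\to 0$, so $P^\phi_a$ collapses to the point $y$; since $\mu(P^\phi_a)\to 0$ this forces $\mu(\{\phi\le -1\})=0$ and, as $\phi$ is continuous on the open set $P$, gives $\inf_P\phi=-1$, attained at $y$ on $\overline P$. Symmetrically, as $a\nearrow \tfrac1n$ we get $\lambda(a)\to 1$ and the translation vanishes, so $P^\phi_a\to P$ and $\sup_P\phi=\tfrac1n$.

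Next, because $P^\phi_a$ is a translate of $\lambda(a)P$ and Lebesgue measure is translation invariant and homogeneous of degree $n$ under dilation, I obtain the explicit volume $\mu(P^\phi_a)=\lambda(a)^n\mu(P)=\left(\frac{n(a+1)}{n+1}\right)^n\mu(P)$ for $-1<a<\tfrac1n$; in particular $a\mapsto\mu(P^\phi_a)$ is continuous, so all level sets $\{\phi=a\}$ are null. I would then compute the two integrals through the layer-cake formula $\int f=\int_0^\infty\mu(\{f>t\})\,dt$ used already in the proof of Theorem~\ref{thm: thm_convex}. On the negative part this gives $\int_{\{\phi<0\}}|\phi|\,d\mu=\int_0^1\mu(P^\phi_{-t})\,dt$, and on the positive part $\int_{\{\phi\ge 0\}}\phi\,d\mu=\int_0^{1/n}\big(\mu(P)-\mu(P^\phi_t)\big)\,dt$. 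Substituting the volume formula, both reduce to elementary power-rule integrals, and---after the binomial simplification $\left(\frac{n}{n+1}\right)^n\left(\frac{n+1}{n}\right)^{n+1}=\frac{n+1}{n}$---each evaluates to the same value $\frac{\mu(P)}{n+1}\left(\frac{n}{n+1}\right)^n$.

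Finally I would assemble the conclusions. Splitting $\int_P\phi=\int_{\{\phi<0\}}\phi+\int_{\{\phi\ge 0\}}\phi$, the two contributions are negatives of one another by the previous step, so $\int_P\phi=0$. Consequently $\int_P|\phi|=2\int_{\{\phi<0\}}|\phi|=\frac{2\mu(P)}{n+1}\left(\frac{n}{n+1}\right)^n$, whence $\frac{1}{\mu(P)}\int_P|\phi|\,d\mu=\frac{2}{n+1}\left(\frac{n}{n+1}\right)^n$. Since $\inf_P\phi=-1$, this is exactly $-\frac{2}{n+1}\left(\frac{n}{n+1}\right)^n\inf_P\phi$, so $\phi$ satisfies the extremizer identity and lies in $\mathcal W(P)$. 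The only genuinely delicate point is the first step---justifying $\inf_P\phi=-1$ and $\sup_P\phi=\tfrac1n$ from a formula valid only on the open interval $(-1,\tfrac1n)$---together with the bookkeeping ensuring the positive and negative pieces cancel exactly; the rest is routine integration.
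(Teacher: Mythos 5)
Your proposal is correct and follows essentially the same route as the paper's proof: read off $\inf_P\phi=-1$ and $\sup_P\phi=\tfrac1n$ from the endpoints, compute $\mu(P^\phi_a)$ from the dilation hypothesis, and evaluate $\int_{\{\phi<0\}}|\phi|$ and $\int_{\{\phi\ge 0\}}\phi$ by the layer-cake formula, finding they both equal $\tfrac{1}{n+1}\left(\tfrac{n}{n+1}\right)^n\mu(P)$ so that $\int_P\phi=0$. The only (harmless) difference is that you verify the extremizer identity by computing the constant directly, whereas the paper concludes by noting that every inequality in the proof of Theorem \ref{thm: thm_convex} becomes an equality.
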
 

\begin{proof} Letting $a \searrow -1$ and $a \nearrow \frac{1}{n}$  in $P^\phi_a : = \{\phi < a\}= \frac{1-an}{1 + n} y + \frac{na+n}{1+n}P$ we conclude that $\inf_P \phi=\phi(y)=-1$ and $\sup_P \phi = \frac{1}{n}$. Moreover, after taking measures, we also have that $\mu(P^\phi_x) = (1+x)^n \mu(P^\phi_0)$ for $x \in [-1, \frac{1}{n}]$.

Next we verify that $\int_P \phi = 0$. For this we carry out the following side calculations

\begin{align*}
  \int_{P^\phi_-} |\phi| = \int_{-1}^0 \mu\left(P^\phi_x\right)dx = \int_{-1}^0 {(1+x)}^n\mu\big(P^\phi_0\big)dx = \frac{1}{n+1}\mu(P^\phi_0),
\end{align*}

\begin{align*} 
    \int_{P^\phi_+} \phi &= \int_0^\frac{1}{n} \mu\left(P \setminus P^\phi_x\right)dx =\int_0^\frac{1}{n} \mu\left(P\right) - \mu\left(P^\phi_x\right)dx= \int_0^\frac{1}{n} \mu\left(P\right) - {(1+x)}^n\mu\big(P^\phi_0\big)dx \\
 &= \frac{1}{n}\mu(P) - \frac{{(\frac{1}{n}+1)^{n+1}}}{n+1}\mu(P^\phi_0) + \frac{1}{n+1}\mu(P^\phi_0)\\
 &=\frac{1}{n}\bigg(1+\frac{1}{n}\bigg)^n\mu(P^\phi_0) - \frac{{(\frac{1}{n}+1)^{n+1}}}{n+1}\mu(P^\phi_0) + \frac{1}{n+1}\mu(P^\phi_0) = \frac{1}{n+1}\mu(P^\phi_0). 
\end{align*}
Comparing the above, we conclude that $\int_P \phi =\int_{P_+} \phi+\int_{P_-} \phi= 0$, as desired.

Finally, using the above facts, we see that each inequality in the proof of Theorem \ref{thm: thm_convex} is in fact an equality, ultimately yielding:
$$
 -\frac{2}{n+1}\cdot{\left(\frac{n}{n+1}\right)}^n\inf_P \phi = \frac{1}{\mu(P)}\int_P |\phi| d\mu.
$$
\end{proof}

With the previous two propositions in hand, we obtain the following characterization of extremizers:

\begin{theorem}\label{thm: extremizer_char} For any $\phi \in \mathcal W(P)$ there exists a unique $y_\phi \in \overline{P}$ such that $\inf_P \phi = \phi(y_\phi)$. Moreover, the map $F: \mathcal W(P) \cap \{\inf_P \phi = -1\} \to \overline{P}$ given by $F(\phi)= y_\phi$ is a bijective function. 
\end{theorem}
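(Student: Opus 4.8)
The plan is to treat the three assertions---existence of $y_\phi$, its uniqueness, and the bijectivity of $F$---separately, leaning heavily on Propositions \ref{prop:extr_func} and \ref{prop: simple_reverse}. For \emph{existence}, since $\phi$ is convex on $P$, its lsc extension to the compact set $\overline{P}$ attains its infimum, and because $P$ is dense in $\overline{P}$ this value equals $\inf_P \phi$; hence there is at least one $y \in \overline{P}$ with $\phi(y) = \inf_P \phi$. For \emph{uniqueness}, I would normalize $\inf_P \phi = -1$ (permissible since $r\phi \in \mathcal W(P)$ for $r>0$) and invoke Proposition \ref{prop:extr_func}: every minimizer $y$ produces the identity $P^\phi_a = \frac{1-an}{1+n}y + \frac{na+n}{1+n}P$ for $-1 < a < \tfrac1n$. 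If $y$ and $y'$ were two minimizers, equating these two expressions for the one intrinsic set $P^\phi_a$ and passing to closures gives $\overline{P^\phi_a} = \frac{1-an}{1+n}y + \frac{na+n}{1+n}\overline{P} = \frac{1-an}{1+n}y' + \frac{na+n}{1+n}\overline{P}$; letting $a \searrow -1$, the dilation factor $\frac{na+n}{1+n}$ tends to $0$ while $\frac{1-an}{1+n}$ tends to $1$, so both sides shrink to the singletons $\{y\}$ and $\{y'\}$, forcing $y = y'$.

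For \emph{injectivity} of $F$ on $\mathcal W(P)\cap\{\inf_P \phi = -1\}$: if $F(\phi) = F(\tilde\phi) = y$, then Proposition \ref{prop:extr_func} gives $P^\phi_a = P^{\tilde\phi}_a$ for all $a \in (-1,\tfrac1n)$, since both sublevel-set families equal the same explicit dilation of $P$. A convex function is recovered from its sublevel sets via $\phi(x) = \inf\{a : x \in P^\phi_a\}$, and since both functions take values in $[-1,\tfrac1n]$ (again by Proposition \ref{prop:extr_func}), the coincidence of sublevel sets on $(-1,\tfrac1n)$ forces $\phi = \tilde\phi$.

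The substance of the theorem is \emph{surjectivity}, and this is where I expect the real work. Given an arbitrary $y \in \overline{P}$, I must produce $\phi \in \mathcal W(P)$ with $\inf_P \phi = -1$ attained at $y$. The natural move is to \emph{prescribe} the sublevel sets to match the target formula and then appeal to Proposition \ref{prop: simple_reverse}, which converts the sublevel-set identity into membership in $\mathcal W(P)$ together with $\int_P \phi = 0$ for free. Concretely, the desired set $\frac{1-an}{1+n}y + \frac{na+n}{1+n}P$ is exactly the homothety $y + \lambda(a)(P - y)$ of $P$ centered at $y$ with ratio $\lambda(a) = \frac{n(a+1)}{n+1}$, which increases from $0$ to $1$ as $a$ runs over $(-1,\tfrac1n)$. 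I would therefore set $\phi(x) := \frac{n+1}{n}\,g(x) - 1$, where $g$ is the Minkowski gauge of $P$ centered at $y$, namely $g(x) = \inf\{\lambda > 0 : x \in y + \lambda(P-y)\}$.

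The remaining tasks are then: (a) $g$ is convex, which is the standard sublinearity of the Minkowski functional of a convex set, valid even when $y \in \partial P$ since we only use it on $P$, where $g$ is finite; (b) $0 \le g < 1$ on $P$, so $\phi \in L^1(P)$ with $\inf_P \phi = -1$ attained at $y$ and $\sup_P \phi = \tfrac1n$; and (c) $\{g < \lambda\} = y + \lambda(P-y)$, which follows from the monotonicity of the homotheties in $\lambda$ together with the openness of $P$, yielding $\{\phi < a\} = \frac{1-an}{1+n}y + \frac{na+n}{1+n}P$ exactly. Proposition \ref{prop: simple_reverse} then gives $\phi \in \mathcal W(P)$ and $F(\phi) = y$. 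I expect the main obstacle to be the boundary case $y \in \partial P$, where $g$ is a gauge with center on the boundary; the delicate points are the finiteness of $g$ and the exact sublevel-set identity (c), both of which I plan to settle using the segment property of convex open sets, namely that the half-open segment from a boundary point to an interior point lies in $P$.
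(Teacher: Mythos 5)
Your proposal is correct and follows essentially the same route as the paper: existence/uniqueness and injectivity are read off from the sublevel-set identity of Proposition \ref{prop:extr_func}, and surjectivity is obtained by prescribing the sublevel sets $\frac{1-an}{1+n}y+\frac{na+n}{1+n}P$ and invoking Proposition \ref{prop: simple_reverse}. Your function $\frac{n+1}{n}g-1$ built from the Minkowski gauge $g$ centered at $y$ is exactly the paper's $\phi_y(x)=\inf\{r\in[-1,\frac{1}{n}]: x\in \frac{1-rn}{1+n}y+\frac{nr+n}{1+n}P\}$ under the reparametrization $\lambda=\frac{n(r+1)}{n+1}$, and the convexity and boundary-case points you flag are settled in the paper by the same convexity-of-$P$ argument.
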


As pointed out to us by the anonymous referee, it is an intriguing question to give a characterization of elements in $\mathcal W(P)$ in terms of their Legendre transform.

\begin{proof} Let $\phi \in \mathcal W(P)$. We can assume without loss of generality that $\inf_P \phi =-1$.
That $y_\phi \in \overline{P}$ is unique, follows from \eqref{eq: sublevelset_identity}. This proves that the map $F(\phi) = y_\phi$ is well defined.

 To show that $F$ is injective on $\mathcal W(P) \cap \{\inf_P \phi = -1\}$, we notice that the identity $P^\phi_a := \{\phi < a\}= \frac{1-an}{1 + n} y_\phi + \frac{na+n}{1+n}P, \ a \in (-1, \frac{1}{n})$ determines $\phi$ uniquely.

To argue surjectivity, let $y \in \overline{P}$. We are going to construct an extremizer $\phi_y \in \mathcal W(P)$ satisfying $\inf_P \phi_y= -1$ and $F(\phi_y) = y$.

Let $x \in P$. We define $\phi_y(x)$ as follows:

$$\phi_y(x) = \inf\Big\{r  \in \Big[-1,\frac{1}{n}\Big] \textup{ such that } x \in \frac{1-rn}{1 + n} y + \frac{nr+n}{1+n}P\Big\}$$

By definition of $\phi_y$ we have:
\begin{equation}\label{eq: sublevel_set}
P^\phi_a: = \{\phi_y < a \} = \frac{1-an}{1 + n} y + \frac{na+n}{1+n}P, \ a \in \Big[-1, \frac{1}{n}\Big].
\end{equation}

We argue that $\phi_y$ is convex. Let $x,z \in P$ and suppose that $\phi_y(x) < a$ and $\phi_y(z) < b$. We will argue that $\phi_y(\alpha x + (1-\alpha) y) \leq \alpha a + (1-\alpha)b, \ \alpha \in [0,1]$. This will follow if we can argue that $\alpha \{\phi_y < a\} + (1-\alpha)\{\phi_y < a\} \subseteq \{\phi_y < \alpha a + (1-\alpha)b\}$. By \eqref{eq: sublevel_set} this is equivalent with:

$$\alpha \frac{na+n}{1+n}P + (1-\alpha) \frac{nb+n}{1+n}P \subseteq \frac{n(\alpha a + (1-\alpha)b)+n}{1+n}P.$$
But this inclusion follows from the fact that $P$ is convex.

Lastly, \eqref{eq: sublevel_set} and Proposition \ref{prop: simple_reverse} implies that $\int_P \phi_y =0$, as well as $\phi_y \in \mathcal W(P)$.
\end{proof}

\subsection{The case of affine extremizers}

Given an extremizer $\phi \in \mathcal W(P)$, whenever $y_\phi \in P$ in Theorem \ref{thm: thm_convex}, the graph of $\phi$ is a cone, with vertex at $y_\phi$. However when $y_\phi \in \partial P$, the shape of the graph of $\phi$ may change drastically. In particular, as pointed out in the proof of \cite[Proposition 4.2]{DGS21}, in some instances such extremizers $\phi$ are affine functions. In this short subsection, we point out that this can happen only in a very particular instance: when $P$ is the intersection of a convex cone and a  half-space.

Let $L \subset \Bbb R^n$ open. We say that $L$ is a \emph{linear cone} if $L+ L \subset L$ and $\alpha L \subset L$ for any $\alpha >0$.
We say that $C \subset \Bbb R^n$ is a \emph{convex cone} if $C = y + L$ for some $y \in \Bbb R^n$ and $L$ linear cone.

We say that $H \subset \Bbb R^n$ is an open half-space, if there exist $\chi: \Bbb R^n \to \Bbb R$ linear so that $H = \{\chi < c\}$ for some $c\in \Bbb R$. If $0 \in H$, we have that $c > 0$, and in this case we can always arrange that $H = \{\chi < 1 + \frac{1}{n}\}$. This normalization determines $\chi: = \chi_H$ uniquely, and we will call $\chi_H$ the \emph{linear defining function} of $H$.

\begin{theorem} \label{thm: convex_extr_aff}Let $P \subset \Bbb R^n$ be a bounded open convex set. There exists $\phi \in \mathcal W(P)$ affine if and only if $P = C \cap H$, where $C=y +L$ is a convex cone and $H$ is a  half-space. In the latter case $\psi(x) := \chi_{H-y}(x-y) -1 \in \mathcal W(P)$ is an affine extremizer.
\end{theorem}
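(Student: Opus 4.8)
The plan is to deduce both implications from the sublevel-set characterizations already established in Proposition \ref{prop:extr_func} and Proposition \ref{prop: simple_reverse}, so that essentially no new analysis is needed beyond convexity bookkeeping. For the ``if'' direction I would assume $P = C\cap H$ with $C = y+L$ and $y\in H$ (which is implicit in the statement, being exactly what is needed for $\chi_{H-y}$ to be defined), set $\psi(x):=\chi_{H-y}(x-y)-1$, and verify the hypothesis of Proposition \ref{prop: simple_reverse}, namely $\{\psi<a\}=\frac{1-an}{1+n}y+\frac{na+n}{1+n}P$ for $-1<a<\frac1n$. Writing $\chi:=\chi_{H-y}$ and using $P-y = L\cap\{\chi<1+\frac1n\}$, the computation reduces to the two elementary facts $\lambda L = L$ and $\lambda\{\chi<c\}=\{\chi<\lambda c\}$ for $\lambda>0$; the threshold bookkeeping closes because $\frac{n(a+1)}{n+1}\cdot\frac{n+1}{n}=a+1$, and the hypothesis $a<\frac1n$ guarantees $\{\chi(x-y)<a+1\}\subset H$, so the half-space constraint defining $H$ is inactive. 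Proposition \ref{prop: simple_reverse} then gives $\psi\in\mathcal W(P)$, and $\psi$ is manifestly affine.

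For the converse I would take an affine $\phi\in\mathcal W(P)$, normalize $\inf_P\phi=\phi(y)=-1$ with $y=y_\phi\in\overline P$ via Theorem \ref{thm: extremizer_char}, and write $\phi(x)=\chi(x-y)-1$ for the linear function $\chi(w):=\phi(y+w)+1$. Proposition \ref{prop:extr_func} supplies $\sup_P\phi=\frac1n$ together with the sublevel identity; since $\phi$ is affine each set $\{\phi<a\}$ is $P$ intersected with a half-space, so after translating by $Q:=P-y$ (note $0\in\overline Q$) the identity becomes the clean scaling statement $\{w\in Q:\chi(w)<\frac{n+1}{n}\lambda\}=\lambda Q$ for all $\lambda\in(0,1)$, while $\sup_P\phi=\frac1n$ records $Q\subset\{\chi<\frac{n+1}{n}\}$.

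The core of the converse is to reconstruct the cone-times-halfspace shape from this scaling identity. I would set $H:=\{x:\chi(x-y)<1+\frac1n\}$, an open half-space containing $y$ (since $\chi(0)=0$), and let $L:=\bigcup_{t>0}tQ$ be the linear cone generated by $Q$, which is open and closed under sums and positive dilations because $Q$ is convex. The claim is $Q=L\cap\{\chi<1+\frac1n\}$. The inclusion ``$\subseteq$'' is immediate. For ``$\supseteq$'', write a point as $w=tw_0$ with $w_0\in Q$, $t>0$, and $\chi(w)<1+\frac1n$; the case $t\le1$ follows from $\lambda Q\subset Q$ (convexity and $0\in\overline Q$), and the case $t>1$ is precisely where the scaling identity is invoked, with $\lambda=\frac1t$, since $\chi(w)<1+\frac1n$ forces $\chi(w_0)<\frac{n+1}{n}\cdot\frac1t=\frac{n+1}{n}\lambda$ and hence $w=w_0/\lambda\in Q$. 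Translating back by $y$ yields $P=(y+L)\cap H=C\cap H$, and rerunning the ``if'' computation on this decomposition identifies the affine extremizer as exactly $\chi_{H-y}(x-y)-1$, which also gives its uniqueness.

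I expect the only genuine obstacle to be the $t>1$ case of this reconstruction: it is the single point at which the extremal value of the constant (the factor $\frac{n+1}{n}$, equivalently the forward inclusion in Proposition \ref{prop:extr_func}) is actually used, rather than mere convexity. The remaining care is bookkeeping around degeneracies: one must note that $\phi$ is non-constant, so $\chi\neq0$ and $H$ is a bona fide half-space (this follows from $\inf_P\phi=-1\neq\frac1n=\sup_P\phi$), and that $y\in H$ automatically, so the linear defining function $\chi_{H-y}$ appearing in the statement is well defined.
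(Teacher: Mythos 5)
Your proposal is correct and follows essentially the same route as the paper: the ``if'' direction verifies the sublevel-set identity for the affine function and invokes Proposition \ref{prop: simple_reverse}, while the converse builds $L=\bigcup_{t>0}t(P-y)$ and $H=\{\phi<\tfrac1n\}$ and proves $P\supseteq (y+L)\cap H$ by the same split into $t\le 1$ (convexity plus $0\in\overline{P-y}$) and $t>1$ (the sublevel identity of Proposition \ref{prop:extr_func}). The only difference is presentational: you spell out the scaling computation $\lambda\{\chi<c\}=\{\chi<\lambda c\}$ that the paper leaves implicit.
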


\begin{proof}Suppose that $\phi: P \to \Bbb R$ affine and $\phi \in \mathcal W(P)$. Let $y \in \overline{P}$ such that $\phi(y) = \inf_P \phi$. 
After replacing $P$ with $P -y$ and $\phi(x)$ with $\phi(x+y)$ we can assume that $y=0$. We can also assume that $\inf_P \phi = -1$.

In particular, $\psi(x):= \phi(x)+1, 
 x \in \Bbb R^n$ is linear, since $\psi$ is affine and $\psi(0)=0$. We introduce $H := \{x \in \Bbb R: \phi(x) < \frac{1}{n}\}=\{x \in \Bbb R : \psi(x) < 1+\frac{1}{n}\}$.
 
 Let $L := \{r P, \ r > 0\}$. It is clear that $L$ is a linear cone, since $P$ is convex. We claim that $P = L \cap H$. Since $P =P^\phi_{\frac{1}{n}} \subset H$, the easy inclusion is $P \subset L \cap H$. For the reverse inclusion, let  $p \in L \cap H$. Then there exists $q \in P$ and $r >0$ such that $p = r q$ and $p \in H$. The latter implies that 
\begin{equation}\label{eq: interm_ineq}
\psi(p) = \phi(p) + 1 =  r\psi(q) = r(\phi(q) + 1) < 1 + \frac{1}{n}.
\end{equation} 
 
If $r \in [0,1]$, then we are done, since $p=r q = 0(1-r) + rq \in P$ ($0 \in \overline{P}$ and $P$ is open). If, $r > 1$ then by \eqref{eq: interm_ineq} we have $\phi(p/r) < \frac{1}{r}-1 + \frac{1}{nr}.$ This implies that $p/r \in P_{\frac{1}{r}-1 + \frac{1}{nr}}$. Since $\phi \in \mathcal W(P)$, by \eqref{eq: sublevelset_identity} we get that 
$$p \in r P_{\frac{1}{r}-1 + \frac{1}{nr}} = P.$$

For the reverse direction, assume that if $P = C \cap H$, where $C = y + L$ is a convex cone and $H$ is half-space. Let $\chi_{H-y}$ be the  linear defining function of $H$.

In this case we take $\phi(x) := \chi_{H-y}(x-y) -1$. We notice that $\phi(y) = -1$, $\sup_P \phi = \frac{1}{n}$. Moreover, since $\phi$ is affine, we obtain that 
$$P^\phi_a := \{\phi < a\}= \frac{1-an}{1 + n} y + \frac{na+n}{1+n}P, \ \  -1 < a < \frac{1}{n}.$$
Proposition \ref{prop: simple_reverse} now gives and $\int_P \phi = 0$ and $\phi \in \mathcal W(P)$.
\end{proof}

\section{Extremizing rays and plurisupported currents}

In proving Theorem \ref{thm: main_extremal_ray} we heavily rely on the formalism developed in \cite{DL18} regarding the metric space of geodesic rays. For more background and details we refer to this work.

By $\mathcal E^1 \subset \textup{PSH}(X,\theta)$ we denote the space of finite energy pontentials: $u \in \mathcal E^1$ if $\int_X \theta_u^n = \int_X\omega^n$ (where $\theta_u^n$ is the non-pluripolar complex Monge--Ampere measure, defined in \cite[Section 1]{GZ07}), moreover $\int_X |u| \theta_u^n < \infty$.

By $\mathcal R^1$ we denote the space of $L^1$ Mabuchi geodesic rays $[0,\infty) \ni t \to u_t \in \mathcal E^1$, normalized by $u_0 =0$ and $I(u_t) =0, \ t \geq 0$.

By \cite[Proposition 5.1]{BDL1} the map $t \to d_1(u_t,v_t)$ is convex for any  $\{u_t\}_t,\{v_t\}_t \in \mathcal R^1$, where we used the conventions of Remark \ref{rem: rays_pot}. In \cite{DL18} this was used to define the following metric (see also \cite{CC18}):
$$d_1^c(\{u_t\}_t,\{v_t\}_t) = \lim_{t \to \infty} \frac{d_1(u_t,v_t)}{t}.$$
By \cite[Theorem 1.3 and 1.4]{DL18} we know that $(\mathcal R^1,d_1^c)$ is complete, moreover the space of normalized bounded geodesic rays $\mathcal R^\infty$ is dense in $\mathcal R^1$ \cite[Theorem 1.5]{DL18}.

The radial $J$ functional from \eqref{eq: J_radial_def} can be expressed in very simple terms, slightly extending \cite[Lemma 5.2]{DGS21}:

\begin{lemma}\label{lem: J_rad_formula} For $\{u_t\}_t \in \mathcal R^1$ we have that $J\{u_t\} = \sup_X \dot u_0= \tau^+_u=\frac{\sup_X u_l}{l}$ for any $l >0$, where $\dot u_0 := \lim_{t \to 0} \frac{u_t}{t}$.
\end{lemma}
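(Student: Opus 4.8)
The plan is to show that the four quantities all coincide with the constant $c := \sup_X u_1$, treating them in the order: first $\frac{\sup_X u_l}{l}$, then $J\{u_t\}$, then $\tau^+_u$, and finally $\sup_X \dot u_0$, which is where the real work lies. The normalization $\frac{\sup_X u_l}{l}$ is immediate: by Lemma \ref{lem: suplinear} the map $t\mapsto \sup_X u_t$ is linear, and since $u_0 = 0$ this forces $\sup_X u_t = ct$ with $c = \sup_X u_1$, so $\frac{\sup_X u_l}{l} = c$ for every $l>0$.

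For $J\{u_t\} = c$, I would use that $I(u_t)=0$ gives $J(u_t) = \frac{1}{V}\int_X u_t\,\o^n$, hence $\frac{J(u_t)}{t} = \frac{1}{V}\int_X \frac{u_t}{t}\,\o^n$. For fixed $x$, convexity of $t\mapsto u_t(x)$ with $u_0(x)=0$ makes $\frac{u_t(x)}{t}$ nondecreasing in $t$, increasing to $\dot u_\infty(x) := \lim_{t\to\infty}\frac{u_t(x)}{t} = \sup_{t>0}\frac{u_t(x)}{t}$. Each $\frac{u_t}{t}$ is $\frac{1}{t}\o$-psh, so for $t\geq s$ it is $\frac{1}{s}\o$-psh; the increasing limit is therefore $\frac1s\o$-psh for every $s$, hence genuinely psh ($i\ddbar\dot u_\infty \geq 0$). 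Being bounded above by $c$ on the compact connected $X$, the maximum principle makes it the constant $\sup_X \dot u_\infty = c$. Bounding the increasing sequence below by $u_1\in L^1(\o^n)$ (valid for $t\geq 1$ by monotonicity) and applying monotone convergence yields $J\{u_t\} = \frac{1}{V}\int_X \dot u_\infty\,\o^n = c$, matching \eqref{eq: J_radial_def}.

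For $\tau^+_u = c$ I would invoke the Ross--Witt Nystr\"om correspondence (Theorem \ref{thm: max_test_curve_ray_duality}): the maximal finite-energy test curve $\{\hat u_\tau\}_\tau$ dual to the ray satisfies $\check{\hat u}_t = u_t$. Since $\hat u_\tau \leq 0$ for all $\tau$ and $\hat u_\tau\equiv -\infty$ for $\tau>\tau^+_u$, the identity $u_t = \sup_\tau(\hat u_\tau + t\tau)$ gives $u_t \leq t\tau^+_u$, so $c\leq \tau^+_u$. Conversely, for $\tau>c$ we have $\hat u_\tau(x) = \inf_{t>0}(u_t(x)-t\tau) \leq \inf_{t>0}(\sup_X u_t - t\tau) = \inf_{t>0} t(c-\tau) = -\infty$, so $\hat u_\tau\equiv -\infty$ and $\tau^+_u\leq c$. (This is essentially the content of \cite[Lemma 5.2]{DGS21}, which the present lemma extends.)

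The crux is $\sup_X \dot u_0 = c$. The easy inequality $\sup_X \dot u_0 \leq \inf_{t>0}\frac{\sup_X u_t}{t} = c$ follows from $\dot u_0 = \inf_{t>0}\frac{u_t}{t}$ and interchanging $\sup$ and $\inf$. The reverse is the main obstacle: $\dot u_0$ is a decreasing limit of functions whose psh-class degenerates as $t\to 0$, so it carries no positivity and one cannot naively exchange $\sup_X$ with $\lim_{t\to 0}$. I would instead argue by compactness. Fix $\tau<c$ and set $A_t := \{x\in X : u_t(x)\geq t\tau\}$. Each $A_t$ is closed (as $u_t$ is usc) and nonempty (as $\sup_X u_t = ct > t\tau$), and since $\frac{u_t(x)}{t}$ is nondecreasing in $t$ the family $\{A_t\}_{t>0}$ is nested increasing; hence every finite subfamily has nonempty intersection, and the finite intersection property on the compact $X$ produces $x_* \in \bigcap_{t>0} A_t$. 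Then $\frac{u_t(x_*)}{t}\geq \tau$ for all $t>0$, so $\dot u_0(x_*) = \inf_{t>0}\frac{u_t(x_*)}{t}\geq \tau$, giving $\sup_X \dot u_0 \geq \tau$. Letting $\tau\nearrow c$ completes the argument and shows all four quantities equal $c = \tau^+_u$.
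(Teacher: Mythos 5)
Your proof is correct, and for two of the four identifications it coincides with the paper's: $\frac{\sup_X u_l}{l}=c$ via the linearity in Lemma \ref{lem: suplinear}, and the Legendre-duality computation of $\tau^+_u$. The interesting divergences are in the other two steps. For $J\{u_t\}=\lim_{t\to\infty}\sup_X u_t/t$ the paper simply cites \cite[Lemma 3.45]{Da19} (the uniform bound $|\sup_X u-\frac{1}{V}\int_X u\,\omega^n|\leq C$ for $\omega$-psh $u$, which kills the discrepancy after dividing by $t$); your substitute --- that the increasing limit $\dot u_\infty$ is $\frac{1}{s}\omega$-psh for every $s>0$, hence psh on the compact $X$, hence the constant $c$, followed by monotone convergence --- is a correct self-contained Hartogs-type argument, at the small cost of having to identify $\dot u_\infty$ with its usc regularization a.e. For the crux $\sup_X\dot u_0=\tau^+_u$, the paper normalizes $\tau^+_u=0$ and leans on maximality of the dual test curve: $P[\hat u_0]=\hat u_0$ forces $\sup_X\hat u_0=0$, upper semicontinuity yields a point $x$ with $\hat u_0(x)=0$, and the squeeze $\hat u_0\leq u_t\leq 0$ gives $u_t(x)=0$ for all $t$, hence $\dot u_0(x)=0$. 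Your finite-intersection-property argument on the nested closed sets $A_t=\{u_t\geq t\tau\}$ reaches the same conclusion using only upper semicontinuity of $u_t$, compactness of $X$, and monotonicity of the difference quotients (which, note, you get from $t$-convexity together with $u_t\leq \tau^+_u t\to 0$ as $t\to 0$); in effect you prove the minimax identity $\sup_X\inf_{t>0}\frac{u_t}{t}=\inf_{t>0}\sup_X\frac{u_t}{t}$ directly, without invoking maximality of $\{\hat u_\tau\}_\tau$ at all. The paper's route is shorter because it reuses the Ross--Witt Nystr\"om machinery already in place; yours is more elementary and isolates exactly which structural facts are needed.
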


\begin{proof} We have that $J(u_t) = \frac{1}{V} \int_X u_t \omega^n - I(u_t) = \frac{1}{V} \int_X u_t \omega^n$. By \cite[Lemma 3.45]{Da19} we obtain that 
$$J\{u_t\} = \lim_{t \to \infty} \frac{J(u_t)}{t} = \lim_{t \to \infty} \frac{\sup_X u_t}{t}.$$
By Lemma \ref{lem: suplinear}  $t \to \sup_X u_t$ is linear, hence $J\{u_t\} = \frac{\sup_X u_l}{l}=\tau^+_u$ for any $l>0$.

Finally, we argue that $J\{u_t\} = \sup_X \dot u_0$. For this we have to argue that $\sup_X \dot u_0 = \frac{\sup_X u_l}{l}$ for any $l>0$.  We make use of Theorem \ref{thm: max_test_curve_ray_duality}. To start, we notice that we can assume that $0=\tau^+_u = \sup_X \frac{u_l}{l}, \ l >0$, after replacing $u_t$ with $u_t - \tau^+_u t$.

Since $u_t : = \sup_{\tau \leq 0}(\hat u_\tau + t\tau), \ t \geq 0$, we obtain that $u_t \geq \hat u_0$ for any $t \geq 0$. From $\tau^+_u = 0$ we also obtain that $u_t \leq 0$. Since $\{\hat u_\tau\}_\tau$ is maximal, we obtain that $P[\hat u_0] = \hat u_0$, in particular $\sup_X \hat u_0 = 0$. Moreover, since $\hat u_0$ is usc, there exists $x \in X$ such that $\sup_X \hat u_0 = u_0(x)$. Since $\hat u_0 \leq u_t \leq 0$, we obtain that $u_t(x) =0$, for all $t >0$.
By $t$-convexity, we have $\dot u_0 \leq 0$. But since $\dot u_0(x)=0$, we conclude that $\sup_X \dot u_0 = 0 = \frac{\sup_X u_l}{l}$ for any $l>0$, finishing the proof.
\end{proof}

\begin{lemma}\label{lem: d_1_init} For $\{u_t\}_t \in \mathcal R^1$ we have $d_1(0,u_1) = \int_X |\dot u_0|\omega^n$ and $0=I(u_1) = \int_X \dot u_0 \omega^n$.
\end{lemma}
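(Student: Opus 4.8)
The plan is to read both identities off as the time-one values of quantities that are \emph{linear in $t$} along the ray, whose slope is the initial speed (of $I$, respectively of $d_1$) at $u_0=0$. First I would record the structural properties of $\dot u_0$. Since $\{u_t\}_t$ is a subgeodesic, $t\mapsto u_t(x)$ is convex with $u_0(x)=0$, so the difference quotient $u_t/t$ is nondecreasing in $t$ and decreases pointwise to $\dot u_0=\inf_{t>0}u_t/t$ as $t\searrow 0$. Finiteness of the energy of the ray guarantees $\dot u_0\in L^1(X,\omega^n)$, so every integral below is meaningful and monotone convergence is available to pass $u_t/t\downarrow\dot u_0$ inside integrals against $\omega^n=\omega_{u_0}^n$.

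For the identity $0=I(u_1)=\int_X\dot u_0\,\omega^n$, I would use that the Monge--Amp\`ere energy $I$ is affine along finite energy geodesics, together with the standard differential formula $\frac{d}{dt}I(u_t)=\frac1V\int_X\dot u_t\,\omega_{u_t}^n$ (see \cite{Da19}). Evaluating the slope at $t=0$, where $\omega_{u_0}=\omega$, and passing $u_t/t\downarrow\dot u_0$ through the integral by monotone convergence gives $I(u_t)=\frac{t}{V}\int_X\dot u_0\,\omega^n$. Since $\{u_t\}_t\in\mathcal R^1$ is normalized by $I(u_t)\equiv 0$, this slope must vanish, so $\int_X\dot u_0\,\omega^n=0$, which is exactly the asserted identity.

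For the identity $d_1(0,u_1)=\int_X|\dot u_0|\,\omega^n$, I would use that geodesic rays in $(\mathcal E^1,d_1)$ have constant speed, so $d_1(0,u_t)=t\,d_1(0,u_1)$ and hence $d_1(0,u_1)=\lim_{t\searrow 0}d_1(0,u_t)/t$. The crux is to identify this limiting speed with the Finsler length of the initial tangent, $\frac1V\int_X|\dot u_0|\,\omega^n$ (the constant $V=\int_X\omega^n$ being suppressed in the statement), using the length characterization of $d_1$ from \cite{Da15} and the convexity and continuity properties of $t\mapsto d_1(0,u_t)$ from \cite{BDL1}, once more combined with $u_t/t\downarrow\dot u_0$ and monotone convergence against $\omega^n$.

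The main obstacle is precisely this last passage to the limit for \emph{weak} finite energy geodesics: $\dot u_0$ is only a monotone pointwise limit of difference quotients, not a smooth tangent vector, so the naive Finsler speed formula — transparent for smooth paths in $\mathcal H_\omega$ — must be upgraded via pluripotential-theoretic convergence, i.e.\ monotone convergence for the energy and $d_1$-continuity along decreasing sequences. By contrast the $I$-identity is comparatively soft once linearity of $I$ along geodesics is granted; the genuine work sits in the $d_1$ speed computation, where one must establish $\frac1V\int_X|u_t/t-\dot u_0|\,\omega^n\to 0$ and transfer it into the statement about $d_1(0,u_t)/t$.
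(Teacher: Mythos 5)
Your reduction of both identities to initial slopes along the ray is the right frame, and it is essentially what lies behind the paper's one-line proof, which simply cites \cite[Lemma 3.4]{BDL2} for both facts. Your treatment of the $I$-identity is sound: linearity of $I$ along finite energy geodesics plus the endpoint derivative formula $\frac{d}{dt}\big|_{t=0^+}I(u_t)=\frac{1}{V}\int_X \dot u_0\,\omega_{u_0}^n$ and monotone convergence of $u_t/t\downarrow \dot u_0$ give $\int_X \dot u_0\,\omega^n=0$ from the normalization $I(u_t)\equiv 0$. You also correctly flag the suppressed $\frac{1}{V}$ in the statement.

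The gap is in the $d_1$-identity, and it sits exactly where you place it -- but you do not close it, and the tools you name cannot close it. Knowing $\frac{1}{V}\int_X|u_t/t-\dot u_0|\,\omega^n\to 0$ does not transfer to $d_1(0,u_t)/t$, because $d_1(0,v)$ is \emph{not} $\frac{1}{V}\int_X|v|\,\omega^n$, even asymptotically: the length characterization from \cite{Da15} and the comparison between $d_1$ and $L^1$-type expressions only give a two-sided bound with a dimensional constant, which is useless here since the whole point of the lemma is an exact identity feeding into a sharp constant. Convexity of $t\mapsto d_1(0,u_t)$ from \cite{BDL1} is likewise vacuous along a ray, where this map is exactly linear. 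The actual mechanism, which is the content of \cite[Lemma 3.4]{BDL2}, is the Pythagorean formula $d_1(0,u_t)=I(0)+I(u_t)-2I(P(0,u_t))=-2I(P(0,u_t))$ combined with the fact that the Monge--Amp\`ere measure of the rooftop envelope $P(0,u_t)$ charges only the contact set $\{P(0,u_t)=0\}\cup\{P(0,u_t)=u_t\}$; this is what lets one compute $\lim_{t\to 0}-I(P(0,u_t))/t=\frac{1}{V}\int_{\{\dot u_0<0\}}(-\dot u_0)\,\omega^n$ and, together with $\int_X\dot u_0\,\omega^n=0$, conclude $d_1(0,u_1)=\frac{1}{V}\int_X|\dot u_0|\,\omega^n$. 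Without invoking this envelope/orthogonality input (or citing the lemma that packages it), your argument establishes the identity only up to multiplicative constants.
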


\begin{proof} That $d_1(0,u_1) = \int_X |\dot u_0|\omega^n$  follows from \cite[Lemma 3.4]{BDL2}. What is more, the argument of \cite[Lemma 3.4]{BDL2} is seen to imply $I(u_1) = \int_X \dot u_0 \omega^n$.
\end{proof}

Given $v \in \textup{PSH}(X,\theta)$, we say that $v$ is a \emph{model potential} if $v = P[v]$, where 
$P[v]$ was defined in \eqref{eq: P[v]_def}.
For more on model potentials we refer to \cite{DDL2}.

\begin{lemma}\label{lem: rad_legendre_sublevel} For any $\{u_t\}_t \in \mathcal R^1$ and $b \leq a <  \tau^+_u$ we have that
$$ \int_{\{\dot u_0 \geq b\}} \omega^n \leq \frac{(\tau^+_u - b)^n}{(\tau^+_u - a)^n} \int_{\{\dot u_0 \geq a\}} \omega^n.$$
Moreover $[-\infty, \tau^+_u) \ni s \to \big(\int_{\{\dot u_0 \geq s\}} \omega^n\big)^{1/n}$ is concave.
\end{lemma}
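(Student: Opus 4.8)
The plan is to pass to the dual picture via the Ross--Witt Nystr\"om correspondence and reduce both assertions to a Brunn--Minkowski-type concavity for the non-pluripolar masses of the associated test curve. Concretely, using Theorem \ref{thm: max_test_curve_ray_duality}(iv) I would attach to $\{u_t\}_t \in \mathcal R^1$ its maximal finite energy test curve $\{\psi_\tau\}_\tau = \{\hat u_\tau\}_\tau$, so that $\psi_\tau = P[\psi_\tau]$, the map $\tau \mapsto \psi_\tau$ is concave and decreasing, and $\tau^+_u = \inf\{\tau : \psi_\tau \equiv -\infty\}$. Since $t \mapsto u_t$ is convex with $u_0 = 0$, the quotient $u_t/t$ is nondecreasing in $t$, so $\dot u_0 = \inf_{t>0} u_t/t$, and a pointwise Legendre computation identifies $\dot u_0(x) = \sup\{\tau : \psi_\tau(x) = 0\}$ at generic $x$; by Lemma \ref{lem: J_rad_formula} we also have $\sup_X \dot u_0 = \tau^+_u$. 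The first key step is then the distribution identity
\[
 f(\tau) := \int_{\{\dot u_0 \geq \tau\}}\omega^n = \int_X \omega_{\psi_\tau}^n, \qquad \tau < \tau^+_u,
\]
which is the Duistermaat--Heckman formula for finite energy rays. I would either cite it from \cite{DX20} or reconstruct it from the layer-cake formula, with the normalization $I\{u_t\}=0$ and the finite energy identity \eqref{eq: I_RWN_form} furnishing an integrated consistency check through $\int_X \dot u_0\,\omega^n = 0$.

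With this identity in hand, both assertions reduce to the concavity of $g(\tau) := f(\tau)^{1/n} = \big(\int_X\omega_{\psi_\tau}^n\big)^{1/n}$ on $(-\infty,\tau^+_u)$. Granting the concavity, the displayed inequality is purely formal: for $b \leq a < \tau^+_u$ and any $c \in (a,\tau^+_u)$, concavity gives $g(a) \geq \tfrac{c-a}{c-b}\,g(b) + \tfrac{a-b}{c-b}\,g(c) \geq \tfrac{c-a}{c-b}\,g(b)$, where the second inequality only uses $g \geq 0$; letting $c \nearrow \tau^+_u$ yields $g(a) \geq \tfrac{\tau^+_u - a}{\tau^+_u - b}\,g(b)$, which upon raising to the $n$-th power is exactly $\int_{\{\dot u_0 \geq b\}}\omega^n \leq \tfrac{(\tau^+_u - b)^n}{(\tau^+_u - a)^n}\int_{\{\dot u_0 \geq a\}}\omega^n$. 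Thus the genuine content is the concavity of $g$, which is the pluripotential analogue of the elementary fact underlying Theorem \ref{thm: thm_convex}, namely that the $1/n$-th power of the Lebesgue measure of the superlevel sets of a concave function is concave.

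To prove concavity of $g$ I would combine three inputs. For $b < a < c < \tau^+_u$ with $a = (1-\mu)c + \mu b$ and $\mu = \tfrac{c-a}{c-b}$, the concavity of the test curve gives the pointwise bound $\psi_a \geq (1-\mu)\psi_c + \mu\psi_b =: v$, where $v \in \mathrm{PSH}(X,\omega)$ as a convex combination. Next, the monotonicity of the non-pluripolar mass (Witt Nystr\"om, Darvas--Di Nezza--Lu) yields $\int_X\omega_{\psi_a}^n \geq \int_X\omega_v^n$. Finally, a complex Brunn--Minkowski inequality for convex combinations of quasi-plurisubharmonic potentials gives $\big(\int_X\omega_v^n\big)^{1/n} \geq (1-\mu)\big(\int_X\omega_{\psi_c}^n\big)^{1/n} + \mu\big(\int_X\omega_{\psi_b}^n\big)^{1/n}$. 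Chaining these three facts produces $g(a) \geq (1-\mu)g(c) + \mu g(b)$, i.e. concavity.

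The main obstacle is the third input: securing a Brunn--Minkowski inequality valid for the possibly very singular model potentials $\psi_\tau$. I would handle this through the Okounkov body attached to the test curve, where $\int_X\omega_{\psi_\tau}^n$ is $n!$ times the volume of a nested family of convex bodies, so that the desired concavity becomes the classical Brunn--Minkowski inequality for these bodies --- the exact complex counterpart of the toric and non-Archimedean pictures, matching the philosophy of \cite{BHJ17}. A secondary technical point is the care needed with the left/right continuity of $\tau \mapsto f(\tau)$ and with the endpoint behavior as $\tau \nearrow \tau^+_u$ in the distribution identity; these issues affect only a measure-zero set of levels and do not disturb the final statements, since the argument uses nothing beyond $g \geq 0$ and concavity of $g$ on the open interval $(-\infty,\tau^+_u)$.
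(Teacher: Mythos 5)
Your proposal follows the same route as the paper: pass to the maximal test curve $\{\hat u_\tau\}_\tau$ via Theorem \ref{thm: max_test_curve_ray_duality}, identify $\int_{\{\dot u_0\geq\tau\}}\omega^n$ with the non-pluripolar mass $\int_X\omega_{\hat u_\tau}^n$, and deduce both claims from concavity of $\tau\mapsto\big(\int_X\omega_{\hat u_\tau}^n\big)^{1/n}$; your derivation of the displayed inequality from concavity plus nonnegativity (letting $c\nearrow\tau^+_u$) is a cosmetic variant of the paper's chain of inequalities involving $\hat u_{\tau^+_u}$.

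Two of the inputs you propose to ``reconstruct'' are, however, genuine theorems that must be cited, and the reconstructions you sketch would not go through. First, the identity $\int_X\omega_{\hat u_\tau}^n=\int_{\{\hat u_\tau=0\}}\omega^n$ is not a layer-cake computation: it uses that $\hat u_\tau$ is a \emph{model} potential together with the contact-set formula for Monge--Amp\`ere masses of envelopes, \cite[Theorem 1]{DT20} (cf.\ \cite[Theorem 3.8]{DDL2}); the layer-cake formula only enters later, when integrating $|\dot u_0|$ in Theorem \ref{thm: radial_ineq}. Second, your ``complex Brunn--Minkowski inequality'' $\big(\int_X\omega_{(1-\mu)\psi_c+\mu\psi_b}^n\big)^{1/n}\geq(1-\mu)\big(\int_X\omega_{\psi_c}^n\big)^{1/n}+\mu\big(\int_X\omega_{\psi_b}^n\big)^{1/n}$ is precisely the log-concavity of non-pluripolar volume, \cite[Theorem B]{DDL4}, used in the paper together with \cite[Theorem 1.2]{WN17} for the monotonicity step. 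This is a deep result whose known proof for arbitrary singular potentials in a (possibly transcendental) K\"ahler class proceeds through complex Monge--Amp\`ere equations with prescribed singularity, not Okounkov bodies; the Okounkov-body route is available for algebraic singularity types on projective manifolds but does not cover the generality of the model potentials $\hat u_\tau$ arising from an arbitrary ray in $\mathcal R^1$. With these two results cited rather than re-derived, your argument coincides with the paper's proof.
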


\begin{proof} It follows from Theorem \ref{thm: max_test_curve_ray_duality} that $\hat u_\tau \in \textup{PSH}(X,\omega)$ is a model potential. As a result, by  \cite[Theorem 1]{DT20} (c.f. \cite[Theorem 3.8]{DDL2}) we have that 
$\int_X \omega_{\hat u_\tau}^n = \int_{\{ \hat u_\tau = 0\}} \omega^n$. 
Moreover, due to basic properties of Legendre transforms $\{\dot u_0 \geq \tau \} = \{ \hat u_{\tau} = 0\}$, in particular,
\begin{equation}\label{eq: int_est}
\int_X \omega_{\hat u_{\tau}}^n = \int_{\{ \dot u_0 \geq \tau\}} \omega^n.
\end{equation}
Let now $b \leq a \leq \tau^+_u$. We recall that $\tau \to \hat u_\tau$ is concave and  $\textup{PSH}(X,\omega) \ni v \to \big(\int_X \omega_{ v}^n\big)^{\frac{1}{n}}$ is concave as well (\cite[Theorem B]{DDL4}. As a result, using \cite[Theorem 1.2]{WN17} we conclude that $s \to \big(\int_X \omega_{\hat u_s}^n \big)^{1/n}$ is concave. Together with \eqref{eq: int_est} this impies the last statement of the lemma. Using concavity we can write that
\begin{flalign}\label{eq: ineq 0}
\frac{(\tau^+_u - a)}{(\tau^+_u - b)}  \bigg(\int_X \omega_{\hat u_b}^n\bigg)^{\frac{1}{n}} &\leq 
\frac{(\tau^+_u - a)}{(\tau^+_u - b)}  \bigg(\int_X \omega_{\hat u_b}^n\bigg)^{\frac{1}{n}} +\frac{(a - b)}{(\tau^+_u - b)}  \bigg(\int_X \omega_{\hat u_{\tau^+_u}}^n\bigg)^{\frac{1}{n}} \\
\label{eq: ineq 1}&\leq \bigg(\int_X \omega_{\frac{(\tau^+_u - a)}{(\tau^+_u - b)} \hat u_b + \frac{(a - b)}{(\tau^+_u - b)} \hat u_{\tau^+_u}}^n\bigg)^{\frac{1}{n}}\\
&\leq \bigg(\int_X \omega_{\hat u_a}^n\bigg)^{\frac{1}{n}}. \label{eq: ineq 2}
\end{flalign}
Comparing with \eqref{eq: int_est}, the result follows.
\end{proof}

Before we proceed, let us recall the construction for a special type of geodesic ray from \cite{Da17}, associated to any $v \in \textup{PSH}(X,\omega)$ and $a,b \in \Bbb R$ with $a<b$.

With \eqref{eq: Leg_transf_def_ray_test_curve} and Theorem \ref{thm: max_test_curve_ray_duality} in mind, let $\hat r(a,b,v)_\tau \in \textup{PSH}(X,\omega)$ defined as follows:
\begin{equation}
\label{eq:psi=P[u]}
 \hat r(a,b,v)_\tau:=
    \begin{cases}
   0,\ &\tau\leq a;\\
   P[\frac{\tau -a}{b-a} v],\ &a < \tau <b;\\
   \lim_{\tau\nearrow b}P[\frac{\tau -a}{b-a} v],\ &\tau=b;\\
   -\infty,\ &\tau>b.
\end{cases}
\end{equation}
By \cite[Theorem 3.12]{DDL2} we have that $P[r(a,b,v)_\tau]=r(a,b,v)_\tau, \ \tau \in \Bbb R$. Hence, by Theorem \ref{thm: max_test_curve_ray_duality}(iii) we obtain that $\{\hat r(a,b,v)_\tau\}_\tau$ is a bounded maximal test curve, inducing a geodesic ray $\{r(a,b,v)_t\}_t \in \mathcal R^\infty$ that will play a critical role below, in case $v \in \textup{PSH}(X,\omega)$ is plurisupported. Indeed, for $v$ plurisupported it is possible to compute the radial Monge--Amp\`ere and $J$ energies of $\{r(a,b,v)_t\}_t$:

\begin{prop}\label{prop: v_J_d_1_extremal}For any $v \in \textup{PSH}(X,\omega)$ plurisupported we have  $I\{r(-{1}/{n},1,v)_t\} =0$, moreover
$$\frac{2}{n+1}\cdot{\left(\frac{n}{n+1}\right)}^n J\{r(-{1}/{n},1,v)_t\} = d_1(0,r(-{1}/{n},1,v)_1).$$
\end{prop}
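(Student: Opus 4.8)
The plan is to reduce both claims to the computation of a single distribution function. I would write $u_t := r(-1/n,1,v)_t$ and let $\{\hat u_\tau\}_\tau$ be the associated maximal bounded test curve from \eqref{eq:psi=P[u]}, so that by Theorem \ref{thm: max_test_curve_ray_duality}(iii) we have $\{u_t\}_t \in \mathcal R^\infty$, with $\tau^+_u = 1$ and $\hat u_\tau = P[c(\tau)v]$ for $\tau \in (-1/n,1)$, where $c(\tau) := (n\tau+1)/(n+1) \in (0,1)$. Exactly as in the proof of Lemma \ref{lem: rad_legendre_sublevel} (using that each $\hat u_\tau$ is a model potential, \cite[Theorem 1]{DT20}, together with the Legendre identity $\{\dot u_0 \geq \tau\} = \{\hat u_\tau = 0\}$), the function $G(\tau) := \int_X \omega_{\hat u_\tau}^n$ satisfies $G(\tau) = \int_{\{\dot u_0 \geq \tau\}}\omega^n$; moreover $\sup_X \dot u_0 = \tau^+_u = 1$ by Lemma \ref{lem: J_rad_formula} and $\dot u_0 \geq -1/n$ since $\hat u_\tau \equiv 0$ for $\tau \leq -1/n$, so $\dot u_0$ takes values in $[-1/n,1]$.

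The heart of the argument is the explicit formula
\begin{equation}\label{eq: mass_formula_plan}
G(\tau) = \left(\frac{n(1-\tau)}{n+1}\right)^n V, \qquad \tau \in \Big[-\tfrac1n,\, 1\Big].
\end{equation}
To prove it I would exploit plurisupportedness as follows. Since $\omega_v$ is supported on a pluripolar set $Z$, off $Z$ the potential $v$ solves $dd^c v = -\omega$ and is therefore smooth, whence $\omega_{c(\tau)v} = (1-c(\tau))\omega$ on $X \setminus Z$; as $Z$ is $\omega^n$-null, the non-pluripolar Monge--Amp\`ere of $c(\tau)v$ equals $(1-c(\tau))^n\omega^n$ and has total mass $(1-c(\tau))^n V$. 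Since the model envelope $P[\cdot]$ preserves the total non-pluripolar mass \cite{DDL2}, we get $G(\tau) = \int_X \omega_{P[c(\tau)v]}^n = \int_X \langle\omega_{c(\tau)v}^n\rangle = (1-c(\tau))^nV$, and \eqref{eq: mass_formula_plan} follows from $1-c(\tau) = n(1-\tau)/(n+1)$. I expect this to be the main obstacle: the non-pluripolar product is not multilinear, so the reduction to the smooth locus $X\setminus Z$ must be carried out directly rather than by expanding $((1-c)\omega + c\,\omega_v)^n$, and one must invoke the correct mass-preservation statement for $P[\cdot]$ together with the smoothness of $v$ off its pluripolar support.

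Granting \eqref{eq: mass_formula_plan}, the two assertions become elementary integrations. For the first, the integrand in \eqref{eq: I_RWN_form} vanishes for $\tau \leq -1/n$, so $I\{u_t\} = \frac1V\int_{-1/n}^1 (G(\tau)-V)\,d\tau + 1$; substituting \eqref{eq: mass_formula_plan} one finds $\int_{-1/n}^1 G(\tau)\,d\tau = V/n$ and $\int_{-1/n}^1 V\,d\tau = (n+1)V/n$, hence $I\{u_t\} = -1 + 1 = 0$ — which is precisely why the endpoints $-1/n$ and $1$ are the correct normalization. For the second, Lemma \ref{lem: J_rad_formula} gives $J\{u_t\} = \tau^+_u = 1$, while Lemma \ref{lem: d_1_init} gives $\int_X \dot u_0\,\omega^n = I(u_1) = 0$ and expresses $d_1(0,u_1)$ through $\int_X |\dot u_0|\,\omega^n$. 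The vanishing of $\int_X \dot u_0\,\omega^n$ lets me write $d_1(0,u_1)$ as twice its negative part, and a layer-cake computation using $\int_{\{\dot u_0 < -s\}}\omega^n = V - G(-s)$ for $s \in (0,1/n)$ yields $\int_0^{1/n}(V - G(-s))\,ds = n^n V/(n+1)^{n+1}$. Combined with $J\{u_t\} = 1$, this gives $d_1(0,u_1) = \frac{2}{n+1}\big(\frac{n}{n+1}\big)^n J\{u_t\}$, as claimed.
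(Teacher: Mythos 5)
Your proof is correct and follows essentially the same route as the paper's: the mass identity $\int_X \omega_{c v}^n=(1-c)^nV$ for $c\in[0,1]$, formula \eqref{eq: I_RWN_form} for the radial $I$, Lemma \ref{lem: J_rad_formula} for $J\{u_t\}=\tau^+_u=1$, and a layer-cake computation via Lemma \ref{lem: d_1_init} and \eqref{eq: int_est} for $d_1(0,u_1)$ (the paper integrates the positive part of $\dot u_0$ where you integrate the negative part; these agree since $\int_X\dot u_0\,\omega^n=0$). The one quibble is your parenthetical claim that the non-pluripolar product is not multilinear: it is, and the paper relies on precisely this multilinearity (see the proof of Lemma \ref{lem: rad_legendre_sublevel_eq}) to expand $\omega_{cv}^n=((1-c)\omega+c\,\omega_v)^n$ and discard every term containing $\omega_v$ --- your alternative derivation by restricting to the smooth locus $X\setminus Z$ is nevertheless valid, just unnecessary.
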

\begin{proof} We start with computing $I\{r(-{1}/{n},1,v)_t\}$. Notice that $\tau_{r}^+ = 1$ and $\tau_{r}^- = -1/n$. 

We will use numerous times that $\int_X \omega_{P[w]}^n = \int_X \omega_{w}^n$ for all $w \in \textup{PSH}(X,\omega)$ (\cite[Theorem 1.3, Theorem 2.3]{DDL2}). In addition, since $v$ is plurisupported, we have $\int_X \omega^k_v \wedge \omega^{n-k}=0$ for all $k \in \{1,\ldots,n\}$. 

Using the above, formula \eqref{eq: I_RWN_form} allows to carry out the  following calculations:
\begin{flalign}\label{eq: I_formula}
            I\{r(-{1}/{n},1,v)_t\} &=\frac{1}{V}\int_{-\frac{1}{n}}^{1} \left(\int_X \omega_{P[\frac{n\tau + 1}{n+1} v]}^n-\int_X \omega^n \right) \,\mathrm{d}\tau+ 1 \\
            &= \frac{1}{V}\int_{-\frac{1}{n}}^{1} \left(\int_X \omega_{\frac{n\tau + 1}{n+1} v}^n-\int_X \omega^n \right) \,\mathrm{d}\tau+ 1 \nonumber\\
            &= \frac{1}{V}\int_{-\frac{1}{n}}^{1} \left( \bigg(\frac{n - n\tau}{n+1}\bigg)^n \int_X \omega^n-\int_X \omega^n \right) \,\mathrm{d}\tau+ 1 \nonumber\\
            & = \frac{n^n}{(n+1)^n} \int_{-\frac{1}{n}}^{1}(1-\tau)^nd\tau - \frac{1}{n}=0. \nonumber
        \end{flalign}
        
Since $J\{r(-{1}/{n},1,v)_t\} = \tau^+_{r(-{1}/{n},1,v)} - I\{r(-{1}/{n},1,v)_t\}$ (Lemma \ref{lem: J_rad_formula}), and $\tau^+_{r(-{1}/{n},1,v)}  =1$, we obtain that $J\{r(-{1}/{n},1,v)_t\} = 1$.

Lastly, we compute $d_1(0,r(-{1}/{n},1,v)_1) = \frac{1}{V}\int_X  |\dot r(-{1}/{n},1,v)_0| \omega^n$. 
 \begin{flalign*}
            d_1(0,r(-{1}/{n},1,v)_1) &=\frac{1}{V}\int_X  |\dot r(-{1}/{n},1,v)_0| \omega^n \\
            &=\frac{2}{V}\int_{_{\{\dot r(-{1}/{n},1,v)_0\geq 0 \}}}  \dot r(-{1}/{n},1,v)_0 \omega^n\\
            &= \frac{2}{V} \int_0^1 \omega^n(\{\dot r(-{1}/{n},1,v)_0\geq s\})ds\\
            &= \frac{2}{V} \int_0^1 \omega_{\hat r_s}^n ds = \frac{2}{V} \int_0^1 \omega_{\frac{ns + 1}{n+1}v}^n ds\\            
            &= \frac{2}{V}\int_{0}^{1}  \bigg(\frac{n - ns}{n+1}\bigg)^n \int_X \omega^n \,\mathrm{d}s \\
                        &= 2\int_{0}^{1}  \bigg(\frac{n - ns}{n+1}\bigg)^n \,\mathrm{d}\tau = \frac{2n^n}{(n+1)^{n+1}},
        \end{flalign*}
where in the second line we have used Lemma \ref{lem: d_1_init}, in fourth line we have used \eqref{eq: int_est}, and in the fifth line we have used that $v$ is plurisupported.
\end{proof}

With the above in hand, we are ready characterize the case of equality in Proposition \ref{lem: rad_legendre_sublevel}:

\begin{lemma}\label{lem: rad_legendre_sublevel_eq} Let $\{u_t\}_t \in \mathcal R^\infty$. Then
$$ \int_{\{\dot u_0 \geq b\}} \omega^n = \frac{(\tau^+_u - b)^n}{(\tau^+_u - a)^n} \int_{\{\dot u_0 \geq a\}} \omega^n$$
for all  $\tau^-_u \leq b \leq a <  \tau^+_u$ if and only if there exists $v \in \textup{PSH}(X,\omega)$ plurisupported such that $u_t = r(\tau^-_u,\tau^+_u,v))_t, \ t \geq 0$.
\end{lemma}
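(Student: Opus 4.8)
The plan is to pass to the test-curve side via Theorem \ref{thm: max_test_curve_ray_duality}, writing $\tau^{\pm}:=\tau^{\pm}_u$ and $f(\tau):=\big(\int_X\omega_{\hat u_\tau}^n\big)^{1/n}$. By the identity \eqref{eq: int_est} the hypothesized family of equalities is equivalent to $f(b)=\frac{\tau^+-b}{\tau^+-a}f(a)$ for all $\tau^-\le b\le a<\tau^+$, i.e. to $\tau\mapsto f(\tau)/(\tau^+-\tau)$ being constant; since $\hat u_\tau\equiv 0$ for $\tau\le\tau^-$ gives $f(\tau^-)=V^{1/n}$, this says precisely that $f(\tau)=\frac{\tau^+-\tau}{\tau^+-\tau^-}V^{1/n}$ is affine, equivalently that equality holds throughout the concavity chain \eqref{eq: ineq 0}--\eqref{eq: ineq 2} from the proof of Lemma \ref{lem: rad_legendre_sublevel}.

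For the forward (\emph{if}) implication I would repeat the computation of Proposition \ref{prop: v_J_d_1_extremal} with the endpoints $-1/n,1$ replaced by $\tau^-,\tau^+$. If $u_t=r(\tau^-,\tau^+,v)_t$ with $v$ plurisupported, then $\hat u_\tau=P[\frac{\tau-\tau^-}{\tau^+-\tau^-}v]$, so using $\int_X\omega_{P[w]}^n=\int_X\omega_w^n$ (\cite{DDL2}), the multilinearity of the non-pluripolar product, and the vanishing $\int_X\omega_v^k\wedge\omega^{n-k}=0$ for $k\ge 1$, one gets $\int_X\omega_{\hat u_\tau}^n=\big(\tfrac{\tau^+-\tau}{\tau^+-\tau^-}\big)^nV$; combined with \eqref{eq: int_est} this is the claimed equality.

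For the converse (\emph{only if}) I would trace the equality case of \eqref{eq: ineq 0}--\eqref{eq: ineq 2}. Fix $\tau^-<a<\tau^+$, set $s:=\frac{a-\tau^-}{\tau^+-\tau^-}$ and $v:=\lim_{\tau\nearrow\tau^+}\hat u_\tau=\hat u_{\tau^+}$; taking $b=\tau^-$ the interpolating potential is $M=s\,v$, and concavity of $\tau\mapsto\hat u_\tau$ gives $sv=M\le\hat u_a$. Equality in the monotonicity step forces $\int_X\omega_M^n=\int_X\omega_{\hat u_a}^n$, hence $\int_X\omega_{P[M]}^n=\int_X\omega_{\hat u_a}^n$ with $P[M]\le\hat u_a$ and both sides model potentials; the model-potential rigidity of \cite{DDL2} then yields $\hat u_a=P[sv]$. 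Since $\hat u_a\not\equiv-\infty$, the limit $v$ is a genuine element of $\textup{PSH}(X,\omega)$, and we have identified $\hat u_\tau=\hat r(\tau^-,\tau^+,v)_\tau$ for all $\tau$; the bijection in Theorem \ref{thm: max_test_curve_ray_duality} upgrades this to $u_t=r(\tau^-,\tau^+,v)_t$. Finally $\int_X\omega_{sv}^n=\int_X\omega_{P[sv]}^n=\int_X\omega_{\hat u_a}^n=(1-s)^nV$ for every $s\in(0,1)$; expanding $\int_X\omega_{sv}^n=\sum_k\binom nk(1-s)^{n-k}s^k\int_X\omega_v^k\wedge\omega^{n-k}$ in the Bernstein basis and matching with $(1-s)^nV$ forces $\int_X\omega_v^k\wedge\omega^{n-k}=0$ for all $k\ge 1$, so $v$ is plurisupported.

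I expect the main obstacle to be the converse, and within it the rigidity step: extracting from equality in the monotonicity and Brunn--Minkowski inequalities of \cite{DDL2,DDL4} that $\hat u_a=P[sv]$ (rather than merely that the total masses coincide), together with checking that $v=\hat u_{\tau^+}$ is a genuine potential and not $\equiv-\infty$. The Bernstein-basis matching that upgrades $\int_X\omega_v^n=0$ to the full plurisupportedness $\int_X\omega_v\wedge\omega^{n-1}=0$ is the other delicate point, since it exploits that the equalities hold for all $s\in(0,1)$ simultaneously.
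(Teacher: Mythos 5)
Your proposal is correct and follows essentially the same route as the paper: set $v=\hat u_{\tau^+_u}$, observe that the hypothesis is exactly the equality case of the concavity chain \eqref{eq: ineq 0}--\eqref{eq: ineq 2}, use the model-potential rigidity of \cite{DDL2} to identify $\hat u_a=P\big[\tfrac{a-\tau^-_u}{\tau^+_u-\tau^-_u}v\big]$, and extract plurisupportedness from the multilinearity (Bernstein-basis) expansion of $\int_X\omega_{sv}^n$. The only difference is that you also write out the converse implication, which the paper leaves to the reader.
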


\begin{proof} We argue the forward direction only, as this is the only implication that we will use. We leave the simple proof of the reverse direction to the reader.

Let $v = \hat u_{\tau^+_u}$. We argue that $v$ is plurisupported and  $u_t = r(\tau^-_u,\tau^+_u,v))_t, \ t \geq 0$.

To start we note that all the inequalities \eqref{eq: ineq 0},\eqref{eq: ineq 1} and \eqref{eq: ineq 2} are equalities.  Taking $b = \tau^-_u$ and $a \in (\tau^-_u,\tau^+_u)$ we arrive at:
\begin{flalign}\label{eq: ineq 0p}
\frac{(\tau^+_u - a)}{(\tau^+_u - \tau^-_u)}  \bigg(\int_X \omega^n\bigg)^{\frac{1}{n}}&= 
\frac{(\tau^+_u - a)}{(\tau^+_u - \tau^-_u)}  \bigg(\int_X \omega^n\bigg)^{\frac{1}{n}} +\frac{(a - {\tau^-_u})}{(\tau^+_u - {\tau^-_u})}  \bigg(\int_X \omega_{v}^n\bigg)^{\frac{1}{n}} \\
\label{eq: ineq 1p}&= \bigg(\int_X \omega_{\frac{(a - {\tau^-_u})}{(\tau^+_u - {\tau^-_u})} v}^n\bigg)^{\frac{1}{n}}\\
&= \bigg(\int_X \omega_{\hat u_a}^n\bigg)^{\frac{1}{n}}. \label{eq: ineq 2p}
\end{flalign}

The equality \eqref{eq: ineq 2p} and  the fact $\hat u_a$ is model (Theorem \ref{thm: max_test_curve_ray_duality}) implies that $\hat u_a = P\Big[\frac{(a - {\tau^-_u})}{(\tau^+_u - {\tau^-_u})} v\Big]=\hat r(\tau^-_u,\tau^+_u,v))_a$ for  $a \in (\tau^-_u,\tau^+_u)$. 

From \eqref{eq: ineq 1p} we have that 
$$\frac{(\tau^+_u - a)^n}{(\tau^+_u - \tau^-_u)^n}  \int_X \omega^n =\int_X \omega_{\frac{(a - {\tau^-_u})}{(\tau^+_u - {\tau^-_u})} v}^n, \ \ a \in [\tau^-_u,\tau^+_u].$$ 
The multilinearity of nonpluripolar products now implies that $\int_X \omega_v^k \wedge \omega^{n-k} =0$ for all $k \in \{1,\ldots,n\}$, hence $v$ is plurisupported.
\end{proof}

Before we characterize extremizing rays, let us recall the proof of  Theorem \ref{thm: radial_main_ineq}  from \cite{DGS21}, sharing striking similarities with the argument of Theorem \ref{thm: thm_convex}:

\begin{theorem}\cite{DGS21}\label{thm: radial_ineq} Suppose that $(X,\omega)$ is a compact K\"ahler manifold and $\{u_t\}_t \in \mathcal R^1$. Then the following  inequalities hold:
\begin{equation}\label{eq: radial_opt_est}
\frac{2}{n+1}\cdot{\left(\frac{n}{n+1}\right)}^n J\{u_t\} \leq d_1(0,u_1)\leq 2 J\{u_t\}.
\end{equation}
\end{theorem}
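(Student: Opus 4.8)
The plan is to transcribe the proof of Theorem~\ref{thm: thm_convex} into the radial setting, with Lemma~\ref{lem: rad_legendre_sublevel} in the role of the sublevel-set estimate~\eqref{eq: P_a_P_b_est}, and Lemmas~\ref{lem: J_rad_formula} and~\ref{lem: d_1_init} providing the dictionary between $\phi$ and the data $(J,d_1,I)$. First I would reduce both inequalities to statements about the single function $\dot u_0$. By Lemma~\ref{lem: J_rad_formula}, $J\{u_t\}=\sup_X\dot u_0=\tau^+_u$; by Lemma~\ref{lem: d_1_init}, $d_1(0,u_1)=\frac1V\int_X|\dot u_0|\,\omega^n$ and the normalization $I(u_1)=0$ reads $\frac1V\int_X\dot u_0\,\omega^n=0$. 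Since reparametrizing $t\mapsto ct$ scales both $J\{u_t\}$ and $d_1(0,u_1)$ by $c$, I may assume $\tau^+_u=J\{u_t\}=1$ (the case $J\{u_t\}=0$ forces $\dot u_0\equiv0$ and hence $d_1=0$, which is trivial). The whole argument then runs on the distribution function $g(s):=\frac1V\int_{\{\dot u_0\ge s\}}\omega^n$ for $s<1$, which by Lemma~\ref{lem: rad_legendre_sublevel} obeys $g(b)\le\frac{(1-b)^n}{(1-a)^n}g(a)$ for $b\le a<1$; this is exactly~\eqref{eq: P_a_P_b_est} with $X$, $\frac1V\omega^n$ and $-\dot u_0$ replacing $P$, $\frac1{\mu(P)}\mu$ and $\phi$.

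The upper bound is the easy half. Because $\frac1V\int_X\dot u_0\,\omega^n=0$, splitting $|\dot u_0|$ into positive and negative parts gives
$$d_1(0,u_1)=\frac1V\int_X|\dot u_0|\,\omega^n=\frac2V\int_{\{\dot u_0\ge0\}}\dot u_0\,\omega^n\le 2\sup_X\dot u_0=2J\{u_t\},$$
exactly mirroring the second estimate of Theorem~\ref{thm: thm_convex}.

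For the lower bound I would follow the harder half verbatim. The layer-cake formula gives $\tfrac12 d_1(0,u_1)=\int_0^1 g(s)\,ds$, and (using mean-zero) equivalently $\tfrac12 d_1(0,u_1)=\int_0^\infty(1-g(-x))\,dx$. Lemma~\ref{lem: rad_legendre_sublevel} with $b=0,a=s$ yields $g(s)\ge(1-s)^n g(0)$, hence the claim $\tfrac12 d_1(0,u_1)\ge\frac1{n+1}g(0)$; with $b=-x,a=0$ it yields $g(-x)\le(1+x)^n g(0)$, and integrating $1-g(-x)$ over $[0,\tfrac1n]$ gives
$$\tfrac12 d_1(0,u_1)\ge\frac1n-\frac1n\Big(\frac{n+1}{n}\Big)^n g(0)+\frac1{n+1}g(0).$$
Writing $\tfrac12 d_1(0,u_1)=A\,g(0)$, the claim forces $A\ge\frac1{n+1}$ and the last display forces $g(0)\ge\big(nA+(\tfrac{n+1}{n})^n-\tfrac{n}{n+1}\big)^{-1}$, so that $d_1(0,u_1)=2A\,g(0)\ge\frac{2A}{nA+(\frac{n+1}{n})^n-\frac{n}{n+1}}$. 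The right-hand side increases in $A$, hence is minimized at $A=\frac1{n+1}$, where it equals $\frac2{n+1}(\frac{n}{n+1})^n=\frac2{n+1}(\frac{n}{n+1})^n J\{u_t\}$, which is the first inequality.

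The main obstacle is not this transcription --- given the three lemmas it is a mechanical computation --- but the input Lemma~\ref{lem: rad_legendre_sublevel}, which is where all the pluripotential theory is concentrated: it replaces the elementary Brunn--Minkowski homogeneity of sublevel sets of a convex function by concavity of $s\mapsto(\int_X\omega_{\hat u_s}^n)^{1/n}$, together with the identity $\int_X\omega_{\hat u_\tau}^n=\int_{\{\dot u_0\ge\tau\}}\omega^n$ furnished by the Ross--Witt Nystr\"om correspondence and the model-potential mass formula. Within the present proof the only technical points requiring care are the homogeneity rescaling to $\tau^+_u=1$ and the vanishing of $\frac1V\omega^n(\{\dot u_0=s\})$ for a.e.\ $s$, used silently in the layer-cake manipulations.
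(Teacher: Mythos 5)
Your proposal is correct and takes essentially the same route as the paper: the paper's proof likewise reduces both estimates to statements about $\dot u_0$ via Lemmas \ref{lem: J_rad_formula} and \ref{lem: d_1_init}, rescales to $\sup_X\dot u_0=1$, and then runs the layer-cake/distribution-function argument of Theorem \ref{thm: thm_convex} with Lemma \ref{lem: rad_legendre_sublevel} playing the role of \eqref{eq: P_a_P_b_est}, ending with the same optimization in $A$. (Your final worry is unnecessary: the identity $\int f\,d\mu=\int_0^{\infty}\mu\{f\geq t\}\,dt$ holds for every nonnegative measurable $f$ by Fubini, so no a.e.\ vanishing of $\omega^n(\{\dot u_0=s\})$ is needed.)
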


\begin{proof} Due to Lemma \ref{lem: J_rad_formula} and Lemma \ref{lem: d_1_init}, it is enough to argue the following estimates:
$$\frac{2}{n+1}\cdot{\left(\frac{n}{n+1}\right)}^n \sup_X \dot u_0 \leq \frac{1}{V} \int_X |\dot u_0| \omega^n \leq 2 \sup_X \dot u_0.
$$
Using re-scaling in time, we can further assume that $\sup_X \dot u_0 = 1$, hence it is enough to argue that
\begin{equation}\label{eq: radial_main_dot_ineq}
\frac{2}{n+1}\cdot{\left(\frac{n}{n+1}\right)}^n  \leq \frac{1}{V} \int_X |\dot u_0| \omega^n \leq 2.
\end{equation}
We first argue the easier second estimate.
Let $X_- := \{\dot u_0 < 0\}$ and $X_+ := \{\dot u_0 \geq  0\}$. Since $I(u_1) = \int_X \dot u_0 \omega^n$ we have that
\begin{flalign*}
\frac{1}{V}\int_X |\dot u_0| \omega^n=\frac{2}{V}\int_{X_+} \dot u_0 \omega^n \leq 2 \sup_X \dot u_0.
\end{flalign*}
To address the first estimate, we make the following preliminary calculation:
\begin{flalign}\label{eq: interm_est}
    \int_X |\dot u_0| \omega^n &= 2 \int_{X^+} \dot u_0 \omega^n = 2\int_{0}^1 \int_{\{ \dot u_0 \geq x\}} \omega^ndx \geq 2\int_{0}^1 {(1-x)}^n  \int_{\{ \dot u_0 \geq 0\}}\omega^n dx  \nonumber\\
    &= \frac{2}{n+1}\int_{\{ \dot u_0 \geq 0\}}\omega^n,
\end{flalign}
where we used that $\int f d \mu = \int_0^{+\infty} \mu\{f \geq t\} dt$ for any non-negative $\mu$-measurable $f$, and Lemma \ref{lem: rad_legendre_sublevel} for the parameters $0 \leq x \leq \sup_X \dot u_0 = 1$.

To estimate $\int_{X_-}|\dot u_0| \omega^n$, we can use a similar technique to the above:
\begin{align}\label{eq:1/n_est}
    \int_{X_-}|\dot u_0| \omega^n &\geq \int_0^{\frac{1}{n}} \int_{X \setminus \{\dot u_0 \geq -x \}} \omega^n dx =\int_0^\frac{1}{n} \bigg(V - \int_{ \{\dot u_0 \geq -x \}} \omega^n \bigg)dx\\
    &\geq \int_0^\frac{1}{n} \bigg( V  - {(1+x)}^n\int_{ \{\dot u_0 \geq 0 \}} \omega^n \bigg)dx \nonumber \\
    &= \frac{V}{n} - \frac{{(\frac{1}{n}+1)^{n+1}}}{n+1}\int_{ \{\dot u_0 \geq 0 \}} \omega^n + \frac{1}{n+1}\int_{ \{\dot u_0 \geq 0 \}} \omega^n, \nonumber
\end{align}
where in the second line we used Lemma \ref{lem: rad_legendre_sublevel} again, for the parameters $-x \leq 0 \leq \sup_X \dot u_0 = 1$.

We now let $A>0$ such that $\frac{1}{2}\int_{X}|\dot u_0| \omega^n =\int_{X_+} |\dot u_0|  \omega^n= -\int_{X_-} \dot u_0  \omega^n = A\int_{ \{\dot u_0 \geq 0 \}} \omega^n$.  This gives
\[A \int_{ \{\dot u_0 \geq 0 \}} \omega^n\geq \frac{1}{n}V - \frac{1}{n}\cdot{\left(\frac{n+1}{n}\right)}^{n}\int_{ \{\dot u_0 \geq 0 \}} \omega^n + \frac{1}{n+1}\int_{ \{\dot u_0 \geq 0 \}} \omega^n,\]
implying
\[\int_X |\dot u_0| \omega^n=2A\int_{ \{\dot u_0 \geq 0 \}} \omega^n \geq \frac{2A}{nA + {\left(\frac{n+1}{n}\right)}^n - \frac{n}{n+1}}V.\]
The right-hand side is an increasing function of $A$ and by \eqref{eq: interm_est} we know $A \geq \frac{1}{n+1}$.  This means the right hand side is minimized at $A = \frac{1}{n+1}$, hence
\begin{equation}\label{eq: last_ineq}
\int_X |\dot u_0| \omega^n \geq \frac{\frac{2}{n+1}}{{\left(\frac{n+1}{n}\right)}^n}V
= \frac{2}{(n+1)}\cdot{\left(\frac{n}{n+1}\right)}^nV,
\end{equation}
finishing the proof of \eqref{eq: radial_main_dot_ineq}.
\end{proof}

\begin{theorem}\label{thm: extr_ray_char}Suppose that $(X,\omega)$ is a compact K\"ahler manifold and $\{u_t\}_t \in \mathcal R^1$ is such that $\tau^+_u=1$ and
\begin{equation}\label{eq: radial_opt_eq}
\frac{2}{n+1}\cdot{\left(\frac{n}{n+1}\right)}^n J\{u_t\} = d_1(0,u_1).
\end{equation}Then there exists $v \in \textup{PSH}(X,\omega)$ plurisupported such that $u_t = r(-\frac{1}{n},1,v)_t, \ t \geq 0$. Conversely, for any $w \in \textup{PSH}(X,\omega)$ plurisupported, the ray $\{r(-\frac{1}{n},1,w)_t\}_t$ satisfies \eqref{eq: radial_opt_eq}.
\end{theorem}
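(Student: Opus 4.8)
The converse direction requires no new work: it is exactly the content of Proposition~\ref{prop: v_J_d_1_extremal} with $w$ in place of $v$, which establishes both $I\{r(-1/n,1,w)_t\}=0$ (so that $\{r(-1/n,1,w)_t\}_t\in\mathcal R^1$) and the extremal equality \eqref{eq: radial_opt_eq}. Hence the entire task is the forward direction, and my plan is to run the proof of Theorem~\ref{thm: radial_ineq} backwards, tracking the cases of equality, and then to invoke Lemma~\ref{lem: rad_legendre_sublevel_eq}.

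First I would translate the hypothesis into sharpness of the scalar estimate. By Lemma~\ref{lem: J_rad_formula}, the assumption $\tau^+_u=1$ gives $J\{u_t\}=\sup_X\dot u_0=1$, and by Lemma~\ref{lem: d_1_init} we have $d_1(0,u_1)=\frac1V\int_X|\dot u_0|\omega^n$ together with $\int_X\dot u_0\,\omega^n=0$. Thus \eqref{eq: radial_opt_eq} is precisely equality in the final estimate \eqref{eq: last_ineq}, i.e.\ $\frac1V\int_X|\dot u_0|\omega^n=\frac{2}{n+1}\big(\frac{n}{n+1}\big)^n$, the sharp instance of \eqref{eq: radial_main_dot_ineq}. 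Writing $g(s):=\int_{\{\dot u_0\geq s\}}\omega^n$ (non-increasing, with $g^{1/n}$ concave, hence continuous, on $(-\infty,1)$ by Lemma~\ref{lem: rad_legendre_sublevel}), I would then identify which inequalities in the proof of Theorem~\ref{thm: radial_ineq} are forced to be equalities.

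This equality bookkeeping is routine but central. Since $\frac{2A}{nA+(\frac{n+1}{n})^n-\frac{n}{n+1}}V$ is strictly increasing in $A$, equality in \eqref{eq: last_ineq} forces $A=\frac1{n+1}$, which in turn forces equality in \eqref{eq: interm_est} and in both inequalities of \eqref{eq:1/n_est}. Unwinding these (using $g(s)\geq(1-s)^ng(0)$ for $s\in[0,1)$ and $g(s)\leq(1-s)^ng(0)$ for $s\in[-1/n,0]$ from Lemma~\ref{lem: rad_legendre_sublevel}, plus the layer-cake identity) yields the profile
\begin{equation*}
g(s)=(1-s)^n g(0)\ \text{ for } s\in[-1/n,1), \qquad g(s)=V\ \text{ for } s<-1/n,
\end{equation*}
where equality in \eqref{eq: interm_est} covers $s\in[0,1)$, the main estimate of \eqref{eq:1/n_est} covers $s\in[-1/n,0]$, and the tightness of the dropped tail $\int_{1/n}^\infty(V-g(-x))\,dx=0$ in \eqref{eq:1/n_est} gives $g(s)=V$ for $s<-1/n$. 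Continuity of $g$ at $-1/n$ then pins down $g(0)=\big(\frac{n}{n+1}\big)^nV$ and matches the two branches.

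The main obstacle is that Lemma~\ref{lem: rad_legendre_sublevel_eq}, which produces the plurisupported potential and identifies the ray, is stated for $\mathcal R^\infty$, whereas we only assume $\{u_t\}_t\in\mathcal R^1$; so I must first upgrade the ray to a bounded one. Here I would use that $g(s)=\int_X\omega_{\hat u_s}^n$ by \eqref{eq: int_est}, that each $\hat u_s$ is a model potential by Theorem~\ref{thm: max_test_curve_ray_duality}, and full-mass rigidity: for $s<-1/n$ we have $\int_X\omega_{\hat u_s}^n=g(s)=V=\int_X\omega^n$, and combined with the identity $\int_X\omega_{\hat u_s}^n=\int_{\{\hat u_s=0\}}\omega^n$ already used for Lemma~\ref{lem: rad_legendre_sublevel} this gives $\omega^n(\{\hat u_s<0\})=0$, whence $\hat u_s\equiv 0$ (an $\omega$-psh function $\leq 0$ vanishing $\omega^n$-a.e.\ is identically $0$). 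Thus the test curve is bounded, $\{u_t\}_t\in\mathcal R^\infty$, and $\tau^-_u=-1/n$ exactly, since $g(\tau)=(1-\tau)^ng(0)<V$ for $\tau\in(-1/n,1)$ rules out $\hat u_\tau\equiv 0$ there. With $\tau^-_u=-1/n$ and $\tau^+_u=1$, the profile of $g$ gives $g(b)/g(a)=(1-b)^n/(1-a)^n=(\tau^+_u-b)^n/(\tau^+_u-a)^n$ for all $\tau^-_u\leq b\leq a<\tau^+_u$, which is precisely the equality hypothesis of Lemma~\ref{lem: rad_legendre_sublevel_eq}. Applying that lemma produces a plurisupported $v$ with $u_t=r(-1/n,1,v)_t$, completing the forward direction.
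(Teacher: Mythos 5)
Your proposal is correct and follows essentially the same route as the paper: the converse via Proposition~\ref{prop: v_J_d_1_extremal}, and the forward direction by forcing equality throughout the proof of Theorem~\ref{thm: radial_ineq} (with $A=\tfrac{1}{n+1}$ pinning down the sublevel-set profile) and then invoking Lemma~\ref{lem: rad_legendre_sublevel_eq}. The only, equally valid, variation is in upgrading the ray to $\mathcal R^\infty$: the paper deduces $\dot u_0 \geq -\tfrac1n$ a.e.\ from the tightness of the dropped tail and then uses $t$-convexity together with plurisubharmonicity of $u_t$, whereas you show $\hat u_s \equiv 0$ for $s < -\tfrac1n$ via full-mass rigidity of the model potentials $\hat u_s$.
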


We note that the condition $\tau^+_u=1$ is a simple normalization, and can be always be achieved after rescaling in time any ray $\{u_t\}_t \in \mathcal R^1$. \eqref{eq: radial_opt_eq}. Though toric symmetries are not involved, the argument shares similarities with the proof of Proposition \ref{prop:extr_func}.

\begin{proof} Let $\{u_t\}_t \in \mathcal R^1$ such that \eqref{eq: radial_opt_eq} holds.

Examining the proof of Theorem \ref{thm: radial_ineq}, since all $x$-integrands are continuous (Lemma \ref{lem: rad_legendre_sublevel}), we conclude that all the inequalities between \eqref{eq: interm_est} and \eqref{eq: last_ineq} are in fact equalities for our $\{u_t\}_t \in \mathcal R^1$.
In particular the first line of \eqref{eq:1/n_est} implies that $\dot u_0 \geq- \frac{1}{n}$ a.e. on $X$. This gives $u_t \geq -\frac{t}{n}$ a.e. on $X$, by $t$-convexity. Since $u_t \in \textup{PSH}(X,\omega)$ we obtain that $u_t \geq -\frac{t}{n}$ globally, hence  $\{u_t\}_t \in \mathcal R^\infty$.

Equality in the second line of \eqref{eq:1/n_est} and \eqref{eq: interm_est} gives $\int_{\{\dot u_0 \geq x\}} \omega^n = (1-x)^n \int_{\{ \dot u_0 \geq 0\}}\omega^n$ for any $x \in [-\frac{1}{n},1)$. Picking  $x = a$, and $x=b$ for $-\frac{1}{n}=\tau^-_u \leq b \leq a <  \tau^+_u=1$  we arrive at
$$ \int_{\{\dot u_0 \geq b\}} \omega^n = \frac{(1 - b)^n}{(1 - a)^n} \int_{\{\dot u_0 \geq a\}} \omega^n.$$
Lemma \ref{lem: rad_legendre_sublevel_eq} now implies that  there exists $v \in \textup{PSH}(X,\omega)$ plurisupported such that $u_t = r(\tau^-_u,\tau^+_u,v))_t, \ t \geq 0$.

For the reverse direction, let $u_t := r(-1/n,1,w)_t, \ t \geq 0$ for $w \in \textup{PSH}(X,\omega)$ plurisupported. Then Proposition \ref{prop: v_J_d_1_extremal} implies that \eqref{eq: radial_opt_eq} holds.
\end{proof}

\let\omegaLDthebibliography\thebibliography 
\renewcommand\thebibliography[1]{
  \omegaLDthebibliography{#1}
  \setlength{\parskip}{1pt}
  \setlength{\itemsep}{1pt plus 0.3ex}
}

\bigskip
\normalsize
\noindent{\sc University of Texas, Austin}\\
{\tt abenda@utexas.edu}\vspace{0.1in}\\
\noindent{\sc University of Maryland}\\
{\tt sbachhub@terpmail.umd.edu, bchristo@terpmail.umd.edu, tdarvas@umd.edu}
\end{document}